\newtheorem{theorem}{Theorem}
\newtheorem{lemma}[theorem]{Lemma}
\newtheorem{corollary}[theorem]{Corollary}
\newtheorem{proposition}[theorem]{Proposition}
\newtheorem{definition}[theorem]{Definition}
\newtheorem{claim}[theorem]{Claim}
\newcommand{\Hm}{\operatorname{Hm}}
\newcommand{\dist}{\operatorname{dist}}
\newcommand{\supp}{\operatorname{supp}}
\newcommand{\mc}[1]{\mathcal{#1}}
\newcommand{\mbb}[1]{\mathbb{#1}}
\newcommand{\til}[1]{\tilde{#1}}
\newcommand{\E}{{\mathbb{E}}}
\newcommand{\one}{\mathbf{\mathbbm{1}}}
\newcommand{\0}{\mathbf{0}}
\newcommand{\GFF}{\operatorname{GFF}}
\newcommand{\IV}{\operatorname{IV}}
\newcommand{\Sym}{\operatorname{Sym}}
\newcommand{\Harm}{\operatorname{Harm}}
\newcommand{\sep}{\operatorname{sep}}
\newcommand\numberthis{\addtocounter{equation}{1}\tag{\theequation}}
\title{
Maximum of the integer-valued Gaussian free field
}
\author{Mateo Wirth \thanks{Partially supported by an NSF grant DMS-1757479}\\ University of Pennsylvania
}
\date{\today}
\begin{document}
\thispagestyle{empty}
\maketitle
\linespread{1.2}

\begin{abstract}
	We study the maximum of the integer valued Gaussian free field on a two-dimensional box, and prove that it is of order $\log(L)$ (where $L$ is the size of the box) at high temperature. That is, it is of the same order as the maximum of the discrete Gaussian free field. Our treatment follows closely the recent paper of Karash and Peled \cite{KP17}.
\end{abstract}

\section{Introduction and main result}\label{sec-intro}
In 1972, Berezinskii \cite{Berezinskii71, Berezinskii72} and Kosterlitz and Thouless \cite{KosterlitzThouless72, KosterlitzThouless73} predicted the existence of new types of phase transitions leading to topological phases of matter. In 1981, Fr\"{o}hlich and Spencer \cite{FrohlichSpencer81} proved these predictions mathematically. Among the many implications of the Fr\"{o}hlich-Spencer proof is the delocalization of the integer-valued discrete Gaussian free field at high temperature. Nevertheless, mathematical understanding of the fine properties of this model remains incomplete. In this paper we use the techniques in the Fr\"{o}hlich-Spencer proof to show that the maximum of the integer-valued discrete Gaussian free field in a box of side length $L$ is of order $\log L$. Our presentation follows closely that in the very nice expository paper by Kharash and Peled \cite{KP17} on the Fr\"{o}hlich-Spencer proof.

\subsection{Integer-Valued Discrete Gaussian Free Field}
Let $L > 2$ be an integer, and $\Lambda$ be the graph with vertex set $V(\Lambda) = \{0,1,\dots,L-1\}^2$ and edge set $E(\Lambda)$ given by all pairs $\{(a,b),(c,d)\} \subset V(\Lambda)$ where $|a-c| + |b-d| = 1$. We call such a graph a \emph{square domain} of side-length $L$. We let $\partial \Lambda$ be the following subset of $V(\Lambda)$
\begin{equation}\label{eq-boundary def}
\partial \Lambda = \{(a,b) \in V(\Lambda) \,:\, a \in \{0,L-1\} \text{ or } b \in \{0,L-1\}\},
\end{equation}
and $\Lambda^o = V(\Lambda) \setminus \partial \Lambda$. We call these sets the boundary and interior of $\Lambda$, respectively. For two vertices $j,l \in V(\Lambda)$, we write $j \sim l$ if $\{j,l\} \in E(\Lambda)$. To simplify notation, we will identify $\Lambda$ with $V(\Lambda)$ from now on.

We now introduce the main object of interest, the \emph{integer-valued discrete Gaussian free field} on $\Lambda$. For a function $h: \partial \Lambda \to \mbb{Z}$ and a positive constant $\beta$, we say that a random field $m : \Lambda \to \mbb Z$ is an integer-valued discrete Gaussian free field (IV-GFF) on $\Lambda$ at inverse temperature $\beta$ with boundary condition $h$, and write its law as $\mbb P^{\IV}_{\beta,\Lambda,h}$, if the following holds
\begin{equation}\label{eq-IVGFF pmf}
\mbb{P}^{\IV}_{\beta,\Lambda,h}(m = g) = \frac{1}{Z^{\IV}_{\beta,\Lambda,h'}} \exp\left[-\frac{\beta}{2}\sum_{j \sim l} (g_j - g_l)^2\right] \prod_{j \in \partial \Lambda} \one_{\{h_j\}}(g_j),
\end{equation}
where $\one_{A}$ is the indicator function of the set $A$, $Z^{\IV}_{\beta,\Lambda,h} > 0$ is a normalization constant chosen so $\mbb P^{\IV}_{\beta,\Lambda,h}$ is a probability measure, and in the sum $\sum_{j \sim l}$ there is exactly one term for every edge in $E(\Lambda)$. We write $\E^{\IV}_{\beta,\Lambda,h}$ for the corresponding expectation, and $\mbb{P}^{\IV}_{\beta,\Lambda,\0}$ for the law of the field with $h$ identically equal to $0$.

Our main interest is to prove bounds on the maximum of the IV-GFF in a square domain as the side-length goes to infinity. Our first result in this direction is a lower bound on the order of the maximum of the absolute value of the IV-GFF. We include it as a separate theorem as the proof is simpler than that of the main theorem but uses the same technical ingredients.
\begin{theorem}\label{thm-abs max asymptotic}
There exist constants $c_0, \eta_0, \beta_0 > 0$ and $L_0 > 2$ such that the following holds. Let $\Lambda$ be a square domain of side-length $L \geq L_0$ and $0< \beta < \beta_0$. Then
	\[
	\mbb{P}^{\IV}_{\beta,\Lambda, \0}\left(\max_{j \in \Lambda} |m_j| \geq \frac{c_0}{\sqrt{\beta}} \log(L)\right) \geq 1 - L^{-\eta_0}.
	\]
\end{theorem}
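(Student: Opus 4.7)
The plan is to establish a single-site Gaussian-type lower bound on the tail of $m_{v_0}$ at an interior vertex $v_0$, and then amplify it to a bound on the maximum via a second-moment (Paley--Zygmund) argument over a grid of bulk vertices. The first and main step is to show that for any $v_0$ with $\dist(v_0, \partial \Lambda) \geq L/4$ and any integer $T$ with $1 \leq T \leq c_1 \log L/\sqrt{\beta}$,
\[
\mbb{P}^{\IV}_{\beta, \Lambda, 0}(m_{v_0} \geq T) \geq L^{-C_1 \beta T^2 / \log L}.
\]
The natural route is a ``translation by a harmonic function'': let $h : \Lambda \to \mbb{Z}$ be an integer approximation of $T$ times the discrete harmonic function with value $1$ at $v_0$ and $0$ on $\partial \Lambda$. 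For a real-valued GFF the substitution $g \mapsto g - h$ would yield
\[
\frac{\mbb{P}(m_{v_0} = T)}{\mbb{P}(m_{v_0} = 0)} = \exp\!\left(-\tfrac{\beta}{2} \sum_{j \sim l}(h_j - h_l)^2\right),
\]
and the Dirichlet energy on the right is of order $T^2 / \log L$ by the standard two-dimensional capacitor estimate. For the IV-GFF one must round $h$ to $\mbb{Z}$, and the resulting charges $-\Delta h \neq 0$ away from $v_0$ contribute an error term which is controlled via the Fr\"{o}hlich--Spencer / Kharash--Peled charge-ensemble analysis, applied now uniformly over the range $T \leq c_1 \log L / \sqrt{\beta}$.

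For the amplification, let $S$ be the set of bulk vertices at distance $\geq L/4$ from $\partial \Lambda$, fix $T := c_0 \log L / \sqrt{\beta}$ with $c_0 < c_1$ small, and set $N_T := \#\{v \in S : m_v \geq T\}$. The single-site bound gives $\E^{\IV} N_T \geq L^{2 - C_1 c_0^2}$. To control the second moment I would apply a two-point version of the same translation argument to obtain, for $u, v \in S$,
\[
\mbb{P}^{\IV}_{\beta, \Lambda, 0}(m_u \geq T, m_v \geq T) \leq \exp\!\left(-c_2 \beta T^2 \left(\frac{1}{\log L} + \frac{1}{\log(L / |u - v|)}\right)\right),
\]
and sum over pairs at each dyadic distance scale to conclude that $\E N_T^2 \leq C_3 (\E N_T)^2$, so that by Paley--Zygmund $\mbb{P}(N_T \geq 1) \geq 1/(2 C_3)$. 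To upgrade to probability $1 - L^{-\eta_0}$, I would partition $\Lambda$ into $M \asymp L^{\eta_0}$ well-separated sub-boxes and apply the same argument to each; the same two-point bound gives the decoupling needed to make the failure events across sub-boxes nearly independent, so the joint failure probability is $\leq (1 - 1/(2 C_3))^M \leq L^{-\eta_0}$, yielding the theorem with a slightly smaller constant $c_0$.

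The main obstacle is the single-site lower tail bound in the first step. The existing FS/KP machinery is tailored to variance-scale fluctuations ($T = O(\sqrt{\log L / \beta})$), whereas here one needs a matching Gaussian exponent all the way up to $T = c_1 \log L / \sqrt{\beta}$. Extending the integer harmonic shift, and controlling the accompanying charges uniformly at this larger scale, is the technical heart of the proof; the rest is a relatively standard Paley--Zygmund amplification, analogous to proofs of max lower bounds for the real-valued discrete GFF.
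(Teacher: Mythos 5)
Your proposal takes a genuinely different route from the paper, and while the high-level shape (a per-site or per-box tail lower bound, then amplification via approximate independence across scales or boxes) is similar in spirit, the two key technical ingredients you rely on are not established and one of them is stated incorrectly.

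The main gap is in Step~1, the single-site lower tail bound via an integer harmonic shift. Writing the shift identity explicitly, one gets
\[
\mbb{P}^{\IV}_{\beta,\Lambda,\0}(m_{v_0} = T) = e^{-\frac{\beta}{2}\langle h, -\Delta_\Lambda h\rangle}\,\E^{\IV}_{\beta,\Lambda,\0}\bigl[\one\{m_{v_0} = 0\}\, e^{-\beta\langle m, -\Delta_\Lambda h\rangle}\bigr],
\]
and after rounding $h$ to $\mbb{Z}$, the density $\rho := -\Delta_\Lambda h$ has nontrivial support spread across $\Lambda$ (not just at $v_0$). You then need a \emph{lower} bound on $\E^{\IV}[\one\{m_{v_0}=0\} e^{-\beta\langle m, \rho\rangle}]$ uniformly for $T$ up to order $\log L / \sqrt{\beta}$, and you attribute this to the Fr\"ohlich--Spencer / Kharash--Peled charge analysis. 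But that machinery expands the partition function (equivalently, a moment generating function of the field) into a convex combination of controllable terms; it is not set up to estimate the expectation of $e^{-\beta\langle m, \rho\rangle}$ restricted to a positive-probability event against a fixed, $T$-dependent external charge configuration. The paper deliberately avoids this issue: Proposition~\ref{prop-IV MGF lower bound} proves an MGF lower bound for the \emph{symmetrized} IV-GFF with arbitrary integer boundary conditions, obtained by approximating the integer constraint with Fej\'er kernels and performing a \emph{real-valued} shift $\phi \mapsto \phi + \sigma/\beta$, which produces no spurious charges; the resulting cosine weights are then controlled by the renormalization step (Theorem~\ref{thm-renormalization step}). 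Your integer-shift route would require genuinely new estimates, and it is not clear it closes.

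The second issue is the two-point estimate. As written,
\[
\mbb{P}^{\IV}_{\beta,\Lambda,\0}(m_u \geq T, m_v \geq T) \leq \exp\!\left(-c_2\beta T^2\Bigl(\tfrac{1}{\log L} + \tfrac{1}{\log(L/|u-v|)}\Bigr)\right)
\]
diverges to $0$ as $|u-v|$ approaches $L$, whereas the true two-point probability should approach the product of the one-point bounds, roughly $\exp(-2c\beta T^2/\log L)$. A correct Gaussian heuristic gives an exponent of the form $-\frac{\beta T^2}{2\log L}\bigl(1 + \frac{\log|u-v|}{\log L}\bigr)$. The paper never needs such a two-point estimate: after proving a conditional tail lower bound on a sub-box (Proposition~\ref{prop-maximum epsilon proba}, obtained by combining the MGF lower bound, the MGF upper bound of Proposition~\ref{prop-IV MGF upper bound}, and the Cauchy--Schwarz inequality), it tiles $\Lambda$ by $\sim L^{2(1-\gamma)}$ disjoint sub-boxes of side $L^\gamma$ and invokes the Markov field property (Lemma~\ref{lm-domain Markov property}) to make the trials conditionally independent given boundary data. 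This gives the $1-L^{-\eta_0}$ bound directly, with no Paley--Zygmund step; the only ``bad-boundary'' control needed is the concentration of the conditional MGF (Lemma~\ref{lm-conditional MGF upper bound}), which follows from a straightforward Chernoff argument using Proposition~\ref{prop-IV MGF upper bound}.

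In short: your sub-box amplification (the third step) is the same mechanism the paper uses, but the paper reaches it without the second-moment detour and, more importantly, without the unresolved integer-shift tail estimate. If you want to carry out a program like yours, the cleanest path is precisely the one the paper takes: prove a conditional MGF lower bound for arbitrary boundary data on a sub-box (via the symmetrized field and the real-valued shift), and extract the tail bound by Cauchy--Schwarz rather than a direct change of measure in $\mbb{Z}^\Lambda$.
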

By symmetry, this implies that the maximum of the IV-GFF is of order $\log(L)$ with probability bounded away from zero. The second theorem, which we will obtain as a consequence of some results used in the proof of the first, shows that in fact the maximum of the IV-GFF is of order $\log(L)$ with high probability.
\begin{theorem}\label{thm-max asymptotic no rate}
	Let $\beta_0 > 0$ be as in Theorem~\ref{thm-abs max asymptotic}. For every $\epsilon > 0$, there exist constants $c_1 > 0$ and $L_1 > 2$ such that the following holds. Let $\Lambda$ be a square domain of side-length $L \geq L_1$ and $0 < \beta < \beta_0$. Then
	\[
	\mbb{P}^{\IV}_{\beta,\Lambda, \0}\left(\max_{j \in \Lambda} m_j \geq \frac{c_1}{\sqrt{\beta}} \log(L)\right) \geq 1 - \epsilon.
	\]
\end{theorem}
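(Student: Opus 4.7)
The strategy is to combine Theorem~\ref{thm-abs max asymptotic} with the $m \mapsto -m$ symmetry of $\mbb P^{\IV}_{\beta,\Lambda,\0}$ to obtain a constant-probability lower bound on the signed maximum, then to boost this to $1-\epsilon$ via a Markov-property tiling argument. For the symmetrization, observe that $\max_\Lambda |m_j| = \max(\max_\Lambda m_j, -\min_\Lambda m_j)$, so the event of Theorem~\ref{thm-abs max asymptotic} is contained in the union
\[
\{\max\nolimits_\Lambda m_j \geq c_0\log L/\sqrt\beta\} \cup \{\min\nolimits_\Lambda m_j \leq -c_0 \log L/\sqrt\beta\}.
\]
These two events are equidistributed under field negation, so each has probability at least $(1-L^{-\eta_0})/2$. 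This already gives the claimed order of magnitude, but only with probability bounded away from zero.

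To upgrade to probability $1-\epsilon$, fix $N = N(\epsilon)$ with $(2/3)^N < \epsilon/2$, and partition a subdomain of $\Lambda$ into $N$ disjoint sub-boxes $B_1, \ldots, B_N$ of side $\ell \asymp L/\sqrt N$, separated by a thin corridor on whose field values we condition. By the Markov property of the IV-GFF (immediate from the Hamiltonian being a sum over edges), conditional on $m$ restricted to the corridor, the sub-box fields $m|_{B_i}$ are mutually independent, each distributed as an IV-GFF on $B_i$ with boundary data $h_i = m|_{\partial B_i}$. It would suffice to show that on a high-probability event $\cE$ measurable with respect to the corridor field,
\[
\mbb P^{\IV}_{\beta, B_i, h_i}\bigl(\max\nolimits_{j \in B_i} m_j \geq c_1 \log L / \sqrt\beta\bigr) \geq 1/3
\]
for each $i$; conditional independence then gives total failure probability at most $\mbb P(\cE^c) + (2/3)^N < \epsilon$, and since $\log\ell = \log L - O(1)$, this produces the claimed bound after adjusting constants.

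The main obstacle is this last conditional bound, since Theorem~\ref{thm-abs max asymptotic} is stated only for zero boundary. The plan is: (a) to arrange $\cE$ so that $\|h_i\|_\infty \leq C \log L / \sqrt\beta$ for all $i$, which should follow from uniform single-site sub-Gaussian tails of the form $\mbb P(|m_j| \geq t) \leq 2\exp(-c\beta t^2/\log L)$ --- a standard byproduct of the Fr\"{o}hlich-Spencer moment-generating-function machinery underlying Theorem~\ref{thm-abs max asymptotic} --- together with a union bound over the corridor; and (b) to use a stochastic monotonicity of the IV-GFF in its boundary data (valid since the single-site conditional distributions are log-concave, so Holley's criterion applies) to dominate the $h_i$-boundary field on $B_i$ from below by a constant-boundary field, which is just the zero-boundary IV-GFF on $B_i$ shifted by an integer. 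Combined with the symmetrized form of Theorem~\ref{thm-abs max asymptotic} applied on $B_i$ (valid once $\ell \geq L_0$), this should yield the required conditional probability at the cost of an adjustable constant in $c_1$. The hard part is step (b): one needs the Frohlich-Spencer-type estimates used to prove Theorem~\ref{thm-abs max asymptotic} to be robust enough that the shift from a bounded boundary does not destroy the $\log L$ order in the maximum.
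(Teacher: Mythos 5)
Your overall framework --- conditional independence via the Markov property, stochastic monotonicity in the boundary data via Holley's criterion, and reduction to the zero-boundary case by a constant shift --- matches the paper's toolkit exactly (compare Lemma~\ref{lm-domain Markov property}, Lemma~\ref{lm-bdry condition monotonicity}, Corollary~\ref{cor-monotonicity needed}). You also correctly identify the crux, namely step~(b), as the place where the argument could fail. Unfortunately it does fail in the form you sketch, and the issue is not an ``adjustable constant'': it is a genuine order-of-magnitude mismatch that forces the paper into a multi-scale construction quite different from your one-shot tiling.

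Here is the problem. Your boxes $B_1,\dots,B_N$ have side $\ell\asymp L/\sqrt N$ and sit deep inside $\Lambda$, so the variance of $m_j$ on $\partial B_i$ is of order $G_\Lambda(j,j)/\beta\asymp\log(L)/\beta$ (by \eqref{eq-Green asymptotic}). To run your stochastic-domination step you need a pointwise lower bound on $h_i$ over all of $\partial B_i$; a union bound over the $\asymp\ell$ boundary vertices forces the threshold in step~(a) to be $\|h_i\|_\infty\le C\log L/\sqrt\beta$ with $C$ a fixed numerical constant determined by the Green's function (morally $C\asymp\sqrt{2/\pi}$), not by $c_0$. Domination by the constant-boundary field then gives, conditionally on $\cE$,
\[
\max_{j\in B_i} m_j \ \ge\ \max_{j\in B_i} m^{(0)}_j \;-\; C\frac{\log L}{\sqrt\beta}
\]
in distribution, where $m^{(0)}$ is the zero-boundary field on $B_i$. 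But Theorem~\ref{thm-abs max asymptotic} only yields $\max_{j\in B_i}|m^{(0)}_j|\ge c_0\log\ell/\sqrt\beta$, and the constant $c_0$ produced by the tiling proof of Theorem~\ref{thm-abs max asymptotic} is tiny ($c_0 = D_1^{-1}=2^{-12}\pi^{-1}$ in the paper's proof). Since $c_0\log\ell - C\log L < 0$ for all admissible $\ell\le L$, the shifted bound is vacuous. Shrinking the boxes does not help: if $B_i$ has side $L^\delta$ and still sits at distance $\asymp L$ from $\partial\Lambda$, the shift is still $\asymp\log L/\sqrt\beta$ while the available max drops to $c_0\delta\log L/\sqrt\beta$, so the gap widens.

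The paper's actual proof resolves this by working \emph{near the boundary} and \emph{iterating across scales}. It peels off a nested sequence of thin shells $A_k$ of width $L^{\delta_k}$ at depth $L^{\delta_k}$ from $\partial\Lambda_k$, so that the field on $\partial\Lambda_{k+1}$ has variance only $\asymp\delta_k\log L/\beta$ (Green's function $\asymp\delta_k\log L$, not $\log L$), and hence the downward shift incurred at step $k$ is only of order $\sqrt{\delta_k}\log L/\sqrt\beta$. By choosing $\delta_k$ to grow super-geometrically in $k$ (equation~\eqref{eq-delta sequence}, with $\delta_{k+1}=3\sqrt{\delta_k}$), the accumulated shift $b_k = 2\sum_{j<k}\sqrt{\delta_j}$ always stays strictly below the achievable maximum $\delta_k\log L/\sqrt\beta$ in the next shell, with a nondecreasing gap $\delta_k - b_k \ge \delta_1 = c_1$. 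Inside each shell one then re-runs the Theorem~\ref{thm-abs max asymptotic}-style tiling with sub-boxes of side $L^{\delta_k}$ to obtain a conditional success probability $\ge 1/4$ per shell, and the $N$ independent trials bring the failure probability down to $(3/4)^N + O(L^{-1}) < \epsilon$. It is this bookkeeping of shift versus achievable max across scales --- not any single application of Theorem~\ref{thm-abs max asymptotic} with a shift --- that makes the argument close, and it is missing from your proposal.
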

As mentioned above, the main technical work is in proving Theorem~\ref{thm-abs max asymptotic}. The main ingredient in the proof is a lower bound on the moment generating function of a symmetrized version of the IV-GFF (see Proposition~\ref{prop-IV MGF lower bound} below). This is a slight generalization of a similar bound which was stated in \cite{FrohlichSpencer81} for the IV-GFF with zero boundary condition, to the symmetrized IV-GFF with arbitrary boundary condition. This bound, together with the Markov field property of the IV-GFF (see Lemma~\ref{lm-domain Markov property}), will allow us to prove both theorems by partitioning the domain $\Lambda$ into a large number of sub-domains. The idea of bounding the maximum of a process by repeated ``trials'' has appeared many times before, for example in \cite{MilosPeled15} to prove delocalization in a large class of random surface models in two dimensions.

We note that, as mentioned in \cite[Section 7]{KP17}, it is straightforward to prove (using Proposition~\ref{prop-IV MGF upper bound} below, say) that the maximum of IV-GFF is of order at most $\log L$, so that the bounds in Theorem~\ref{thm-abs max asymptotic} and Theorem~\ref{thm-max asymptotic no rate} are of the right order.

\subsection{Discrete Gaussian free field}
In this section we introduce the discrete Gaussian free field (GFF). This field is closely related to the IV-GFF and will play a key role in our arguments. For a function $h: \partial \Lambda \to \mbb R$ and a positive constant $\beta$, we say a random field $\phi: \Lambda \to \mbb R$ is a \emph{discrete Gaussian free field} on $\Lambda$ at inverse temperature $\beta > 0$ with boundary condition $h$, and write its law as $\mbb P^{\GFF}_{\beta,\Lambda,h}$, if its law has the following density
\begin{equation}\label{eq-GFF density}
d\mbb P^{\GFF}_{\beta,\Lambda,h}(g) = \frac{1}{Z_{\beta,\Lambda,h}} \cdot \exp\left(-\frac{\beta}{2}\sum_{j \sim l} (g_j - g_l)^2\right) \prod_{j \in \partial \Lambda} d\delta_{h_j}(g_j) \prod_{j \in \Lambda^o} dg_j,
\end{equation}
where $dg_j$ is the Lebesgue measure on $\mbb R$, $\delta_{h_j}$ is the Dirac delta measure at $h_j$, and $Z_{\beta,\Lambda,h} > 0$ is a normalization constant. As in the integer-valued case, we write $\E^{\GFF}_{\beta,\Lambda,h}$ for the corresponding expectation, and $\mbb{P}^{\GFF}_{\beta,\Lambda,\0}$ for the law of the field with $h$ identically equal to $0$. That this field is indeed Gaussian can be seen in the case of zero boundary condition by noting that the term in the exponent is a quadratic form of $\{\phi_j \,:\, j \in \Lambda^o\}$, and for general boundary condition $h$ by the fact (proved in the next section) that there exists a deterministic function $\tilde h$ such that $\phi - \tilde h$ is a GFF with zero boundary condition.

Compared to the integer-valued case, the maximum of the real-valued Gaussian free field is very well understood. For instance, Bolthausen, Deuschel and Giacomin \cite{BDG01} established the leading asymptotics for the maximum, Bramson, Ding, and Zeitouni \cite{BDZ16} proved that the centered maximum of the GFF converges in law, and Biskup and Louidor \cite{BiskupLouidor16} proved convergence in law for the extremal process.

Such detailed results seem currently out of reach for the integer-valued GFF as the proofs generally rely on fine properties of Gaussian processes which are not available in this case.

\subsection{Discussion}
As mentioned, our focus is on the IV-GFF with zero boundary condition. This is slightly different from \cite{KP17}, which treats the field with either free or periodic boundary condition. We will introduce these models in Section~\ref{sec-prelim} and make a few comments on how to extend our results to cover them in Section~\ref{sec-thm1 sketch} and Section~\ref{sec-thm2 proof}.

The rest of the paper is structured as follows. In Section~\ref{sec-prelim} we introduce some notation and a few simple facts which will be used in the proofs. In Section~\ref{sec-thm1 sketch} we sketch the proof of Theorem~\ref{thm-abs max asymptotic}, with technical details deferred to subsequent sections. Finally, in Section~\ref{sec-thm2 proof} we prove Theorem~\ref{thm-max asymptotic no rate}.

\section{Notation and preliminaries}\label{sec-prelim}
Before presenting the proofs of the main theorems, we state some simple facts which will be needed. Throughout, we let $\Lambda$ be a square domain, and $\partial \Lambda \subset \Lambda$ be as in \eqref{eq-boundary def}. We write $\dist(j,l)$ for the graph distance between $j$ and $l$ and $\dist(A,B) := \min_{j \in A, l \in B} \dist(j,l)$ for $A,B \subset V(\Lambda)$.

\subsection{Asymptotic notation}
The following notation will be used to describe the asymptotic behavior of functions. For two functions $g_1$ and $g_2$, we say $g_1(L) = O(g_2(L))$ as $L \to \infty$ if there exist constants $c > 0$ and $L_0 > 0$ such that for all $L \geq L_0$, $|g_1(L)| \leq c |g_2(L)|$. We say $g_1(L) = o(g_2(L))$  if for every constant $c > 0$ there exists $L_0 > 0$ such that for all $L \geq L_0$, $|g_1(L)| \leq c |g_2(L)|$. For a collection of functions $\{g_\alpha \,:\, \alpha \in A\}$ indexed by a set $A$ (usually $A = \Lambda$), and another function $g_2$, we say $g_\alpha = O(g_2)$ uniformly in $\alpha$ if there exist constants $c > 0$ and $L_0 > 0$ such that for all $L \geq L_0$ and $\alpha \in A$, $|g_\alpha(L)| \leq c |g_2(L)|$. We say $g_\alpha = o(g_2)$ uniformly in $\alpha$ if the analogous condition holds.

\subsection{The symmetrized field}
 In this section we introduce symmetrized versions of the GFF and the IV-GFF. Briefly, these are obtained by multiplying the appropriate field by an independent, unbiased random sign. More concretely, we call a random integer-valued field $n: \Lambda \to \mbb Z$ a \emph{symmetrized integer-valued discrete Gaussian free field} with boundary condition $h$ at inverse temperature $\beta > 0$, and write its law as $\mbb P^{\IV-\Sym}_{\beta,\Lambda,h}$, if there exists an IV-GFF $m$ (with appropriate parameters) and an independent mean-zero random variable $X$ taking values in $\{-1,1\}$ such that $n = X\cdot m$. We note that
\[
\mbb P^{\IV-\Sym}_{\beta,\Lambda,h}(\cdot) = \frac{1}{2}\left[\mbb P^{\IV}_{\beta,\Lambda,h}(\cdot) + \mbb P^{\IV}_{\beta,\Lambda,-h}(\cdot)\right],
\]
and in particular $\mbb P^{\IV-\Sym}_{\beta,\Lambda,\0} = \mbb P^{\IV}_{\beta,\Lambda,\0}$. The symmetrized discrete Gaussian free field is defined analogously. These symmetrized fields are used to prove a lower bound on the moment generating function of the integer-valued field with non-zero boundary condition. This constitutes a minor extension of the main bound in the Fr\"{o}hlich-Spencer proof.

\subsection{Harmonic functions}
We define the Laplacian $\Delta_\Lambda$ as the linear operator satisfying
\begin{equation}\label{eq-LaplacianDef}
(\Delta_\Lambda f)_j := \sum_{l : l \sim j} (f_l - f_j), \quad f : \Lambda \to \mbb R.
\end{equation}
We say a function $\tilde h : \Lambda \to \mbb{R}$ is harmonic in $\Lambda^o$ (or simply harmonic) if
\[
(\Delta_\Lambda \tilde h)_j = 0, \quad j \in \Lambda^o.
\]
We denote the space of harmonic functions by $\Harm(\Lambda)$. We note that for any $h:\partial \Lambda \to \mbb R$ there is a unique function $\tilde h \in \Harm(\Lambda)$, which we call the harmonic extension of $h$, such that $\tilde h_j = h_j$ for $j \in \partial \Lambda$. In fact, $\tilde h$ can be constructed as follows. Let $S$ be a continuous-time simple random walk on $\Lambda$ with transition rate 1 from $j$ to $l$, whenever $j \sim l$, and $\zeta = \inf \{t \geq 0 \,:\, S_t \in \partial \Lambda\}$. We let $\mbb P_j$ be the law of $S$ with $S_0 = j$, and $\E_j$ denote the expectation with respect to $\mbb P_j$. We can then define the function $\Hm_{\Lambda} :\Lambda \times \partial \Lambda \to \mbb R$, which we call the harmonic measure on $\partial\Lambda$, by $\Hm_{\Lambda}(j,l) := \mbb P_j(S_\zeta = l)$. Finally, we let
\begin{equation}\label{eq-Harmonic extension}
\tilde h_j = \sum_{l \in \partial \Lambda} \Hm_{\Lambda}(j,l)h_l.
\end{equation}
We note that if $\phi$ is a GFF with boundary condition $h$, then $\phi-\tilde h$ is a GFF with zero boundary condition. This follows directly from \eqref{eq-GFF density}. Since the law of a GFF with zero boundary condition is invariant under the mapping $\phi \to -\phi$, it follows that  $\E^{\GFF}_{\beta,\Lambda,h}[\phi] = \tilde h$. For ease of notation, we will identify a function $h$ on $\partial \Lambda$ with its harmonic extension $\tilde h$ whenever there is no risk of confusion. 

\subsection{The Green's function}
In this section we introduce the Green's function of the simple random walk (killed on $\partial \Lambda$). As above, we let $S$ be a simple random walk on $\Lambda$ and $\zeta$ be the hitting time of $\partial \Lambda$ by $S$. We define the Green's function $G_\Lambda: \Lambda^2 \to \mbb{R}$ by
\begin{equation}\label{eq-Greenfunc}
G_\Lambda(j,l) := \E_j\left[ \int_0^\zeta \one_{\{l\}}(S_t) dt\right].
\end{equation}
The importance of $G_\Lambda$ for our arguments comes from the fact that it is proportional to the covariance of the GFF and is therefore closely related to its moment generating function. This will follow from the following fact.
\begin{claim}\label{claim-Green's Laplacian}
Let $l \in \Lambda^o$ be a vertex and $\sigma^l: \Lambda \to \mbb R$ be given by $\sigma^l_j = G_\Lambda(j,l)$. Then
\[
(\Delta_\Lambda \sigma^l)_j =
	\begin{cases}
	-1 &j = l,\\
	\Hm(l,j) &j \in \partial \Lambda,\\
	0 &\text{otherwise}.
	\end{cases}
\]
\end{claim}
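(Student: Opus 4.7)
The plan is to split the verification into three cases according to the position of $j$ relative to $l$ and $\partial\Lambda$, and handle the interior cases via first-step analysis and the boundary case via time reversal plus a last-exit style identity.

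First I would recall that for the continuous-time random walk $S$ with jump rate $1$ along each edge, the infinitesimal generator acting on functions $f:\Lambda\to\mbb R$ vanishing on $\partial\Lambda$ is exactly $\Delta_\Lambda$. Applying Dynkin's formula (or, equivalently, conditioning on the first jump time and doing a one-step analysis) to the function $\sigma^l(\cdot)=G_\Lambda(\cdot,l)$ yields, for every $j\in\Lambda^o$,
\[
(\Delta_\Lambda \sigma^l)_j \;=\; \sum_{k\sim j}\bigl(G_\Lambda(k,l)-G_\Lambda(j,l)\bigr) \;=\; -\one_{\{j=l\}},
\]
because the expected occupation time at $l$ starting from $j$ exceeds the ``one-step later'' expected occupation time by exactly $\one_{\{j=l\}}$, i.e.\ by the Dirac mass picked up by the initial infinitesimal sojourn. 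This disposes of the first two cases of the claim.

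For $j\in\partial\Lambda$, the walk is killed instantly, so $\sigma^l_j=G_\Lambda(j,l)=0$ and therefore
\[
(\Delta_\Lambda \sigma^l)_j \;=\; \sum_{k\sim j} G_\Lambda(k,l).
\]
Next I would use reversibility of the symmetric rate-$1$ jump dynamics (the transition rates are symmetric in $(k,l)$) to swap arguments: $G_\Lambda(k,l)=G_\Lambda(l,k)$. Then I would recognize the sum as a last-exit / jump-counting representation of harmonic measure. Since $j\in\partial\Lambda$ is absorbing, the event $\{S_\zeta=j\}$ is precisely the event that the walk makes a single jump into $j$ from some neighbor before being killed, so
\[
\Hm_\Lambda(l,j) \;=\; \mbb P_l(S_\zeta=j) \;=\; \sum_{k\sim j} \E_l\!\left[\#\{\text{jumps from } k \text{ to } j \text{ before }\zeta\}\right].
\]
Each such expected jump count equals (rate)$\times$(expected occupation time), i.e.\ $1\cdot G_\Lambda(l,k)$, giving $\Hm_\Lambda(l,j)=\sum_{k\sim j}G_\Lambda(l,k)=\sum_{k\sim j}G_\Lambda(k,l)$, which matches the previous display and closes the boundary case.

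The only real subtlety is the boundary case: one must justify the identity between harmonic measure and the sum of Green's function values at neighbors of $j$. I would present this carefully using the jump-counting identity above (alternatively, one could derive it by observing that the function $j\mapsto \sum_{k\sim j}G_\Lambda(k,l)$, $j\in\partial\Lambda$, arises as the boundary flux of a function whose Laplacian is $-\delta_l$ on $\Lambda^o$, and invoking uniqueness of the Poisson kernel). The interior cases, by contrast, are a direct application of the generator identity and should go through without complications.
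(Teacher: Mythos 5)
Your proposal is correct and follows essentially the same route as the paper: a case split on $j=l$, $j\in\Lambda^o\setminus\{l\}$, $j\in\partial\Lambda$, with first-step (generator) analysis for the interior cases and symmetry of $G_\Lambda$ plus a last-exit identity for the boundary case; the paper cites \cite[Lemma 6.3.6]{LawlerLimic10} for the last-exit identity, whereas you give a self-contained jump-counting (Lévy system) argument. One small imprecision to tidy up in the interior case: the gap between $G_\Lambda(j,l)$ and the one-step average $\frac14\sum_{k\sim j}G_\Lambda(k,l)$ is $\frac14\one_{\{j=l\}}$ (the expected initial holding time), not $\one_{\{j=l\}}$; the missing factor of $4$ reappears when you form $(\Delta_\Lambda\sigma^l)_j=\sum_{k\sim j}(\sigma^l_k-\sigma^l_j)$, so the stated conclusion $(\Delta_\Lambda\sigma^l)_j=-\one_{\{j=l\}}$ is nonetheless right.
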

\begin{proof}
We begin by noting that for $j \in \Lambda^o \setminus\{l\}$, we have by the strong Markov property for simple random walk that
\[
G_\Lambda(j,l) = \frac{1}{4}\sum_{k : k \sim j} G_\Lambda(k,l),
\]
where we have used the fact that $j$ has degree 4 (i.e. is adjacent to 4 vertices). It follows that $(\Delta_{\Lambda} \sigma^l)_j = 0$. Similarly, noting that the expected time it takes $S$ to jump in $\Lambda^o$ is $\frac{1}{4}$, we have
\[
\sigma^l_l = G_{\Lambda} (l,l) = \frac{1}{4} + \frac{1}{4}\sum_{j:j \sim l} G_{\Lambda}(j,l).
\]
It follows that $(\Delta_{\Lambda} \sigma^l)_l = -1$. For $j \in \partial \Lambda$, we want to show
\[
(\Delta_{\Lambda}\sigma^l)_j = \sum_{k: k \sim j} G_{\Lambda}(k,l) = \Hm(l,j).
\]
This follows from a last exit decomposition of the event $\{S_\zeta = j\}$. To see this, we let $\hat S$ be the discrete time simple random walk associated to $S$ and $\hat G_{\Lambda}$ be its Green's function. That is, for $j,l \in \Lambda$
\begin{equation}\label{eq-discrete Green's function}
\hat G_{\Lambda}(j,l) = \mathbb E_j\left[ \sum_{n = 0}^{\hat \zeta-1} \one_{\{l\}}(\hat S_n)\right] = 4 G(j,l),
\end{equation}
where $\hat \zeta = \min \{n \geq 0 \,:\, \hat S \in \partial \Lambda\}$. By \cite[Lemma 6.3.6]{LawlerLimic10}, we have for $j \in \partial \Lambda$, $l \in \Lambda^o$
\[
\Hm(l,j) = \sum_{k: k \sim j} \hat G_{\Lambda}(l,k) \frac{1}{4} = \sum_{k: k \sim j} G_\Lambda(l,k).
\]
To conclude, we note that $G_\Lambda$ is symmetric, so $G_\Lambda(l,k) = G_\Lambda(k,l)$. This follows from the fact that $\hat G_\Lambda$ is symmetric, which is an easy consequence of the symmetry of the simple random walk on $\Lambda^o$ (see \cite[Lemma 4.6.1]{LawlerLimic10}).
\end{proof}

As mentioned above, $G_\Lambda$ is closely related to the moment generating function of the GFF. Thus, it will be useful to characterize its asymptotic behavior as the side-length of the domain grows. It follows from \cite[Theorem 4.4.4, Proposition 4.6.2]{LawlerLimic10} that the following holds uniformly over $\Lambda$ and $j \in \Lambda^o$
\begin{equation}\label{eq-Green asymptotic}
G_{\Lambda}(j,j) = \frac{1}{2\pi}\log(\dist(j, \partial \Lambda)) + O(1), \quad \text{as} \quad \dist(j,\partial \Lambda) \to \infty.
\end{equation}
We note that the results in \cite{LawlerLimic10} are stated for $\hat G_{\Lambda}$ which differs from $G$ by a factor of $4$. This accounts for the discrepancy between \eqref{eq-Green asymptotic} above and the corresponding statements in \cite{LawlerLimic10}.

\subsection{The orthogonal complement of $\Harm(\Lambda)$}\label{sec-harm perp}
For $f,g: \Lambda \to \mbb{R}$ we write
\begin{align*}
\langle f, g \rangle &:= \sum_{j \in \Lambda} f_jg_j.
\end{align*}
We denote by $\Harm(\Lambda)^{\perp}$ the space of functions $f:\Lambda \to \mbb R$ such that $\langle f,\tilde h\rangle = 0$ for all $\tilde h \in \Harm(\Lambda)$. For the rest of the paper, we always take $f$ to be an element of $\Harm(\Lambda)^\perp$.
\begin{claim}\label{claim-harm perp}
A function $f : \Lambda \to \mbb R$ is in $\Harm(\Lambda)^\perp$ if and only if there exists a function $\sigma:\Lambda \to \mbb R$ such that $-\Delta_{\Lambda} \sigma = f$ and $\sigma_j = 0$ for all $j \in \partial \Lambda$.
\end{claim}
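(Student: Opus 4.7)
The plan is a standard "if and only if" proof via summation by parts for one direction and an explicit Green's-function construction for the other.

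\textbf{Forward direction} ($\Leftarrow$). Assume $\sigma$ vanishes on $\partial\Lambda$ and $-\Delta_\Lambda\sigma=f$. For any $\tilde h\in\Harm(\Lambda)$, I will perform the standard graph summation by parts:
\[
\langle f,\tilde h\rangle
=\sum_{j\in\Lambda}(-\Delta_\Lambda\sigma)_j\,\tilde h_j
=\sum_{\{j,l\}\in E(\Lambda)}(\sigma_j-\sigma_l)(\tilde h_j-\tilde h_l)
=\sum_{l\in\Lambda}\sigma_l\,(-\Delta_\Lambda \tilde h)_l.
\]
Splitting the last sum into $l\in\Lambda^o$ (where $\Delta_\Lambda\tilde h=0$ by harmonicity) and $l\in\partial\Lambda$ (where $\sigma_l=0$) gives zero, so $f\in\Harm(\Lambda)^\perp$.

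\textbf{Backward direction} ($\Rightarrow$). Assume $f\in\Harm(\Lambda)^\perp$. The natural candidate is
\[
\sigma_j:=\sum_{l\in\Lambda^o}G_\Lambda(j,l)\,f_l,\qquad j\in\Lambda.
\]
Since $G_\Lambda(j,l)=0$ when $j\in\partial\Lambda$ (the walk $S$ is killed there), we get $\sigma_j=0$ on $\partial\Lambda$ for free. Applying Claim~\ref{claim-Green's Laplacian} to each $\sigma^l(\cdot)=G_\Lambda(\cdot,l)$ and summing, I get
\[
(-\Delta_\Lambda\sigma)_j=
\begin{cases}
f_j & j\in\Lambda^o,\\[2pt]
-\sum_{l\in\Lambda^o}f_l\,\Hm_\Lambda(l,j) & j\in\partial\Lambda.
\end{cases}
\]
So the claim reduces to verifying that for every $k\in\partial\Lambda$,
\[
f_k=-\sum_{l\in\Lambda^o}f_l\,\Hm_\Lambda(l,k).\qquad(\star)
\]

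\textbf{Using the hypothesis.} This is where $f\in\Harm(\Lambda)^\perp$ enters. For a fixed $k\in\partial\Lambda$, take the boundary data $h=\one_{\{k\}}$; its harmonic extension is $\tilde h_j=\Hm_\Lambda(j,k)$ for $j\in\Lambda$, which equals $\delta_{j,k}$ when $j\in\partial\Lambda$. Then
\[
0=\langle f,\tilde h\rangle
=\sum_{j\in\Lambda^o}f_j\Hm_\Lambda(j,k)+f_k,
\]
which is exactly $(\star)$. This completes the proof.

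\textbf{Where the work sits.} The forward direction is routine given summation by parts; the only care needed is bookkeeping boundary terms. The backward direction has one non-obvious step: checking that $-\Delta_\Lambda\sigma$ matches $f$ on $\partial\Lambda$, since the Green's function construction a priori only inverts the Laplacian on the interior. The orthogonality hypothesis, tested against the harmonic measures $\Hm_\Lambda(\cdot,k)$, is precisely what delivers this boundary compatibility, so the main obstacle is realizing that these particular harmonic functions are the ones to plug into the definition of $\Harm(\Lambda)^\perp$.
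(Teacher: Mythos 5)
Your proof is correct and uses essentially the same ingredients as the paper: Claim~\ref{claim-Green's Laplacian} to invert the Laplacian, and testing $f$ against the harmonic measures $\Hm_\Lambda(\cdot,k)$ to get the boundary compatibility condition. The only cosmetic difference is that you construct $\sigma = \sum_{l\in\Lambda^o} G_\Lambda(\cdot,l)f_l$ directly and verify, whereas the paper passes through the intermediate observation that $\{f^j : j\in\Lambda^o\}$ forms a basis of $\Harm(\Lambda)^\perp$; the two routes are mathematically identical.
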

We note that the function $\sigma$ in the claim is necessarily unique since $\Delta_\Lambda \sigma = \Delta_\Lambda g$ if and only if $\sigma - g$ is a constant function. Therefore, we will write $\sigma = -(\Delta_\Lambda)^{-1} f$. Throughout the rest of the paper, $\sigma$ will denote such a function.

We note that for any function $g$ on $\Lambda$, we have
\[
\sum_{j \sim l} (g_j - g_l)^2 = \langle g, -\Delta_{\Lambda} g\rangle.
\]
Therefore, assuming the claim holds, the following change of variables $\phi \to \phi+\beta^{-1}\sigma$ shows that for all (bounded, measurable) functions $g$ of $\phi$ we have
\begin{align*}
\E^{\GFF}_{\beta,\Lambda,h}[e^{\langle\phi, f \rangle}g(\phi)] &= \frac{1}{Z^{\GFF}_{\beta,\Lambda,h}}\cdot\int_{\mbb R^\Lambda} g(\phi) e^{\langle \phi, -\Delta_\Lambda \sigma\rangle-\frac{\beta}{2} \langle \phi,-\Delta_\Lambda \phi\rangle} \prod_{j \in \partial \Lambda}d\delta_{h_j}(\phi_j) \prod_{j \in \Lambda^o} d\phi_j\\
&= \frac{\exp\left(\frac{1}{2\beta} \langle \sigma, f\rangle \right)}{Z^{\GFF}_{\beta,\Lambda,h}}\cdot \int_{\mbb R^\Lambda} g\left(\phi + \frac{\sigma}{\beta}\right)e^{-\frac{\beta}{2} \langle \phi,-\Delta_\Lambda \phi\rangle} \prod_{j \in \partial \Lambda}d\delta_{h_j}(\phi_j) \prod_{j \in \Lambda^o} d\phi_j\\ 
&= \exp\left(\frac{1}{2\beta} \langle \sigma, f\rangle \right)\E^{\GFF}_{\beta,\Lambda,h}\left[g\left(\phi + \frac{\sigma}{\beta}\right)\right]. \numberthis \label{eq-GFF MGF}
\end{align*}
It follows that the same holds for the symmetrized field.

\begin{proof}[Proof of Claim~\ref{claim-harm perp}]
It is easy to see that if $f = -\Delta_\Lambda \sigma$, $\langle f, h\rangle = \langle \sigma , -\Delta_\Lambda h \rangle = 0$ since $\Delta_\Lambda h$ vanishes on $\Lambda^o$. To prove the converse, we suppose $f \in \Harm(\Lambda)^{\perp}$ and construct $\sigma$. Let $l \in \partial \Lambda$ be a vertex on the boundary and $\til h \in \Harm(\Lambda)$ be given by $\til h_j = \Hm_{\Lambda}(j,l)$. We observe
\[
0 = \langle f,h \rangle = f_l + \sum_{j \in \Lambda^o} \Hm_{\Lambda}(j,l)f_j.
\]
That is, 
\begin{equation}\label{eq-Boundary value of antiharmonic function}
f_l = - \sum_{j \in \Lambda^o} \Hm_{\Lambda}(j,l)f_j.
\end{equation}
For $j \in \Lambda^o$, let $f^j$ be the following function
\[
f^j_k := \begin{cases}
	1 &  k= j,\\
	-\Hm_{\Lambda}(j,k) &k \in \partial \Lambda,\\
	0 &\text{otherwise}.
\end{cases}
\]
It follows from \eqref{eq-Harmonic extension} that $f^j \in \Harm(\Lambda)^\perp$ so by \eqref{eq-Boundary value of antiharmonic function}, $\{f^j \,:\, j \in \Lambda^o \}$ is a basis for $\Harm(\Lambda)^\perp$. Finally, it follows from Claim~\ref{claim-Green's Laplacian} that for any $j \in \Lambda^o$ there exists a function $\sigma^j : \Lambda \to \mbb R$ that vanishes on $\partial \Lambda$ such that $f^j  = -\Delta_\Lambda \sigma^j$.
\end{proof}

\subsection{Square sub-domains}
For a square domain $\Lambda$, we call $\Pi \subset \Lambda$ a square sub-domain of side-length $R$ if there exist $(a,b) \in \Lambda$ and an integer $R \geq 1$ such that
\[
\Pi = \{(c,d) \in \Lambda \,:\, (c-a,d-b) \in \{0,1,\dots,R-1\}^2\},
\]
where we assume that $R$ and $(a,b)$ are such that $(a+R-1,b+R-1) \in \Lambda$.

\subsection{Markov field property}
The following Markov field property of the IV-GFF follows directly from \eqref{eq-IVGFF pmf}.
\begin{lemma}\label{lm-domain Markov property}
	Let $\Lambda$ be a square domain, and $m$ be an IV-GFF on $\Lambda$ at inverse temperature $\beta > 0$ with boundary condition $h: \partial \Lambda \to \mbb Z$. Let $\Pi \subset \Lambda$ be a square sub-domain of $\Lambda$. Then $\{m_j \,:\, j \in \Pi\}$ is conditionally independent of $\{m_j \,:\, j \in \Lambda \setminus \Pi\}$ given $\{m_j \,:\, j \in \partial \Pi\}$. Additionally, for any function $h': \partial \Pi \to \mbb Z$, the conditional distribution of $\{m_j \,:\, j \in \Pi\}$ given $m_j = h'_j$ for all $j \in \partial \Pi$ is that of an IV-GFF on $\Pi$ at inverse temperature $\beta$ with boundary condition $h'$.
\end{lemma}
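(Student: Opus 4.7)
The plan is to exploit the product (Gibbs) structure of the PMF \eqref{eq-IVGFF pmf}, which couples sites only through nearest-neighbor pairs, together with the purely geometric fact that $\partial \Pi$ separates $\Pi^o$ from $\Lambda \setminus \Pi$: no edge of $\Lambda$ connects an interior vertex of the square sub-domain $\Pi$ to a vertex of $\Lambda \setminus \Pi$.

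First I would partition the edge set $E(\Lambda)$ as $E(\Lambda) = E_1 \sqcup E_2$, where
\[
E_1 := \{\{j,l\} \in E(\Lambda) : j, l \in \Pi\}, \qquad E_2 := \{\{j,l\} \in E(\Lambda) : \{j,l\} \cap (\Lambda \setminus \Pi) \neq \emptyset\}.
\]
Using the separation remark, every edge in $E_2$ in fact has both endpoints in $\Lambda \setminus \Pi^o$, because an edge with one endpoint in $\Pi^o$ would have to cross into $\Lambda \setminus \Pi$, which the square geometry forbids. Moreover $E_1$ coincides with the edge set $E(\Pi)$ of the sub-domain $\Pi$ viewed as a square domain in its own right.

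Next I would write the Hamiltonian as a sum over $E_1$ plus a sum over $E_2$ and factor the PMF as
\[
\mbb P^{\IV}_{\beta,\Lambda,h}(m = g) \;=\; \frac{1}{Z^{\IV}_{\beta,\Lambda,h}}\, A(g|_{\Pi})\, B(g|_{\Lambda \setminus \Pi^o}) \prod_{j \in \partial \Lambda} \one_{\{h_j\}}(g_j),
\]
where $A(g|_{\Pi}) := \exp\!\bigl[-\tfrac{\beta}{2}\sum_{\{j,l\} \in E_1}(g_j - g_l)^2\bigr]$ and $B(g|_{\Lambda \setminus \Pi^o}) := \exp\!\bigl[-\tfrac{\beta}{2}\sum_{\{j,l\} \in E_2}(g_j - g_l)^2\bigr]$. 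Because the arguments of $A$ and $B$ overlap only on $\partial \Pi$, fixing $m|_{\partial \Pi}$ (together with the deterministic values $m|_{\partial \Lambda} = h$) makes the conditional joint PMF of $m|_{\Pi^o}$ and $m|_{\Lambda \setminus \Pi}$ a product of a function of $m|_{\Pi^o}$ times a function of $m|_{\Lambda \setminus \Pi}$, which is conditional independence.

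For the second assertion, condition on $m|_{\partial \Pi} = h'$ and read off the conditional PMF of $m|_{\Pi^o}$: it is proportional (in $g|_{\Pi^o}$) to $A(g|_\Pi)$ with $g|_{\partial \Pi} = h'$. Since $E_1 = E(\Pi)$, this is exactly $\exp\!\bigl[-\tfrac{\beta}{2}\sum_{j \sim_\Pi l}(g_j - g_l)^2\bigr]$ subject to $g_j = h'_j$ for $j \in \partial \Pi$; renormalizing produces precisely $\mbb P^{\IV}_{\beta,\Pi,h'}$ as in \eqref{eq-IVGFF pmf}. The only real obstacle is a purely combinatorial one, namely verifying the edge partition and the separation property of $\partial \Pi$ inside $\Lambda$; after that the rest is bookkeeping built into the Gibbs structure, which is why the author states the lemma follows directly from \eqref{eq-IVGFF pmf}.
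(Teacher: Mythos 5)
Your proof is correct and is exactly the spelled-out version of what the paper implicitly appeals to when it says the Markov property ``follows directly from \eqref{eq-IVGFF pmf}'': partition the Hamiltonian over edges interior to $\Pi$ versus the rest, note that the square geometry prevents any edge from joining $\Pi^o$ to $\Lambda \setminus \Pi$, and read off the factorization. The paper gives no further argument, so there is no divergence in approach to report.
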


\subsection{Free and periodic boundary conditions}
Here we introduce two slightly different versions of the IV-GFF. We begin by describing the IV-GFF with \emph{free boundary condition}. For a square domain $\Lambda$, $\beta > 0$, and $v \in \Lambda$, we say a random field $m : \Lambda \to \mbb Z$ is an IV-GFF with free boundary condition at inverse temperature $\beta$, and write its law as $\mbb P^{\IV}_{\beta,\Lambda,v}$, if it satisfies
\[
\mbb P^{\IV}_{\beta,\Lambda,v}(m = g) = \frac{1}{Z^{\IV}_{\beta,\Lambda,v}} \cdot \exp\left[-\frac{\beta}{2}\sum_{j \sim l}(g_j - g_l)^2\right] \one_{\{0\}}(g_v).
\]
Similarly, we write $\mbb P^{\GFF}_{\beta,\Lambda,v}$ for the probability measure on functions $\phi: \Lambda \to \mbb R$ with density
\[
d\mbb P^{\GFF}_{\beta,\Lambda,v}(\phi) = \frac{1}{Z^{\GFF}_{\beta,\Lambda,v}} \cdot \exp\left[-\frac{\beta}{2}\sum_{j \sim l}(g_j - g_l)^2\right] \one_{[-\pi,\pi)}(\phi_v) \prod_{j \in \Lambda}d\phi_j,
\]
where as before $d\phi_j$ is the Lebesgue measure. This is the GFF with free boundary condition. We choose the normalization $\phi_v \in [-\pi,\pi)$ instead of the more common $\phi_v = 0$ for consistency with \cite{KP17}. This amounts to adding an independent random variable, uniform on $[-\pi,\pi)$, at every point in $\Lambda$ to a field with the usual normalization. This doesn't affect the order of the maximum.

The field with periodic boundary condition will be denoted by the same notation ($\mbb P^{\IV}_{\beta,\Lambda,v}$ and $\mbb P^{\GFF}_{\beta,\Lambda,v}$) and the formulas for the densities stated above remain valid. The difference is that we alter the graph $\Lambda$ slightly by making it a discrete torus. That is we take $E(\Lambda)$ to be the set of all pairs $(a,b),(c,d) \in V(\Lambda)$ with $(a,b)$ and $(c,d)$ equal in one coordinate and differing by one modulo $L$ in the other. Additionally, we require $L$ to be even so that the graph is bipartite.

Both the free boundary and periodic boundary fields have the Markov field property stated in Lemma~\ref{lm-domain Markov property}, in the sense that for a square sub-domain $\Pi$, given $\{m_j \,:\, j \in \partial \Pi\}$, $\{m_j \,:\, j \in \Pi\}$ is independent of $\{m_j \,:\, j \in \Lambda \setminus \Pi\}$ and is an IV-GFF on $\Pi$ with boundary condition given by $\{m_j \,:\, j \in \partial \Pi\}$.


\section{Proof of Theorem~\ref{thm-abs max asymptotic} - overview}\label{sec-thm1 sketch}
\subsection{Domain decomposition}
The main step in the proof is establishing the following result.
\begin{proposition}\label{prop-maximum epsilon proba}
There exist constants $D_1, R_0, \beta_0 > 0$ such that the following holds. Let $0 < \beta < \beta_0$, $\Lambda$ be a square domain of side-length $L$, and $\Pi \subset \Lambda$ be a square sub-domain of side-length $R \geq R_0$. Let
\[
U = \mbb P^{\IV}_{\beta,\Lambda,\0}\left(  \max_{j \in \Pi} |m_j| \geq \frac{\log(R)}{\sqrt{\beta}} \mid m_j,\, j \in \partial \Pi \right).
\]
Then for any $w > D_1$,
\[
\mbb P^{\IV}_{\beta,\Lambda,\0} \left( U < R^{-w} \right) \leq R^{-\frac{w^2}{ D_1}}.
\]
\end{proposition}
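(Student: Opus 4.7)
The strategy is to establish, via the symmetrized MGF bounds and a Paley--Zygmund inequality, a deterministic uniform lower bound $U(h)\geq c R^{-D_1}$ valid for every boundary trace $h$, which is strictly stronger than the asserted probabilistic bound and immediately implies it.

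\emph{Step 1 (Markov reduction and symmetrization).} By the Markov field property (Lemma~\ref{lm-domain Markov property}), conditional on $h := m|_{\partial\Pi}$ the restriction $m|_\Pi$ is an IV-GFF on $\Pi$ with boundary condition $h$, so $U$ is a function of $h$ alone. The invariance of $\mbb P^{\IV}_{\beta,\Lambda,\0}$ under $m\mapsto -m$ preserves the event $\{\max_{j\in\Pi}|m_j|\geq \log R/\sqrt\beta\}$, giving $U(h)=U(-h)$, and averaging,
\[
U(h)=\mbb P^{\IV-\Sym}_{\beta,\Pi,h}\bigl(\max_{j\in\Pi}|m_j|\geq \tfrac{\log R}{\sqrt\beta}\bigr),
\]
which is the form to which the symmetrized MGF estimates of Propositions~\ref{prop-IV MGF lower bound} and~\ref{prop-IV MGF upper bound} apply.

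\emph{Step 2 (Polynomial lower bound from the MGF).} Fix an interior point $j_0\in\Pi^o$ with $\dist(j_0,\partial\Pi)\geq R/3$, so that $G_\Pi(j_0,j_0)\geq c_0\log R$ by the Green's function asymptotic~\eqref{eq-Green asymptotic}. Apply Proposition~\ref{prop-IV MGF lower bound} with the test function $\lambda f^{j_0}\in\Harm(\Pi)^\perp$ from the proof of Claim~\ref{claim-harm perp}; since $\langle\sigma^{j_0},f^{j_0}\rangle=G_\Pi(j_0,j_0)$, this yields the uniform (in $h$) lower bound
\[
\E^{\IV-\Sym}_{\beta,\Pi,h}\bigl[e^{\lambda(n_{j_0}-X\tilde h(j_0))}\bigr]\geq \exp\!\bigl(c\lambda^2\log R/\beta\bigr).
\]
Pairing this with the matching upper bound (Proposition~\ref{prop-IV MGF upper bound}) at scale $2\lambda$ and running Paley--Zygmund on $e^{\lambda(n_{j_0}-X\tilde h(j_0))}$, one tunes $\lambda\asymp \sqrt\beta$ so that the Paley--Zygmund threshold is of order $\log R/\sqrt\beta$, producing
\[
\mbb P^{\IV-\Sym}_{\beta,\Pi,h}\bigl(|n_{j_0}-X\tilde h(j_0)|\geq \tfrac{K\log R}{\sqrt\beta}\bigr)\geq c_1 R^{-a}
\]
for a suitable $K\geq 3$ and universal constants $c_1,a>0$.

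\emph{Step 3 (Case split and main obstacle).} If $|\tilde h(j_0)|\leq (K-1)\log R/\sqrt\beta$, the triangle inequality converts the above event into $\{|m_{j_0}|\geq \log R/\sqrt\beta\}$, so $U(h)\geq c_1 R^{-a}$. If instead $|\tilde h(j_0)|\geq 2\log R/\sqrt\beta$, apply the MGF upper bound (Proposition~\ref{prop-IV MGF upper bound}) to the centered variable $m_{j_0}-\tilde h(j_0)$: it is sub-Gaussian with variance of order $\log R/\beta\ll(\log R/\sqrt\beta)^2$, so by Chernoff $|m_{j_0}-\tilde h(j_0)|\leq \log R/\sqrt\beta$ with probability $\geq 1/2$, which gives $|m_{j_0}|\geq|\tilde h(j_0)|-\log R/\sqrt\beta\geq \log R/\sqrt\beta$ and hence $U(h)\geq 1/2$. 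In either regime $U(h)\geq c'R^{-D_1}$ uniformly in $h$, so for $R$ large and $w>D_1$ the event $\{U<R^{-w}\}$ is empty and the stated bound is automatic. The principal obstacle is the Paley--Zygmund calibration of Step 2: one must verify that the ratio $\E[e^{\lambda\cdot}]^2/\E[e^{2\lambda\cdot}]$ is a controlled polynomial in $R$ at the scale $\lambda\asymp\sqrt\beta$, which depends on the constants in Propositions~\ref{prop-IV MGF lower bound} and~\ref{prop-IV MGF upper bound} matching up and in particular on the lower bound being genuinely uniform in $h$---the paper's key technical extension of Fr\"ohlich--Spencer from zero to arbitrary boundary data.
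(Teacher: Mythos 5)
Your proposal aims for something strictly stronger than the proposition asserts: a \emph{deterministic} lower bound $U(h)\ge c\,R^{-D_1}$ valid for every boundary trace $h$. That route has a genuine gap in Step~2. The Paley--Zygmund calibration requires an \emph{upper} bound on $\E^{\IV-\Sym}_{\beta,\Pi,h}[e^{2\lambda V}]$ that holds uniformly over $h$, but Proposition~\ref{prop-IV MGF upper bound} is stated and proved only for $h=\0$. Unlike the GFF, one cannot reduce to the zero-boundary case by centering: the map $m\mapsto m-\tilde h$ destroys the integer constraint, so there is no analogue of Proposition~\ref{prop-IV MGF upper bound} with general $h$ available in the paper (or easily derivable). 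You misdiagnose this at the end of your write-up, pointing to the lower bound ``being genuinely uniform in $h$'' as the worry --- in fact Proposition~\ref{prop-IV MGF lower bound} \emph{is} uniform in $h$, which is exactly the paper's extension of Fr\"ohlich--Spencer; the missing piece is the uniform-in-$h$ upper bound, which the paper never claims.

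The paper's actual proof is designed precisely to avoid this issue. It does \emph{not} bound the conditional MGF uniformly in $h$. Instead it treats $W_\pm=\E^{\IV}_{\beta,\Lambda,\0}\bigl[e^{\pm 2s_\beta\langle m,f^*\rangle}\mid m_j,\,j\in\partial\Pi\bigr]$ as a random variable (a function of the boundary data), bounds its unconditional $p$-th moments via Jensen's inequality and the unconditional upper bound from Proposition~\ref{prop-IV MGF upper bound}, and then applies Markov's inequality (Lemma~\ref{lm-conditional MGF upper bound}). This gives $W_\pm\le R^w$ with probability $\ge 1-R^{-w^2/D_1}$, which together with the Cauchy--Schwarz step that replaces Paley--Zygmund gives exactly the probabilistic conclusion stated. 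This is \emph{why} the proposition is phrased as a tail bound for $U$ rather than a deterministic statement: a uniform lower bound on $U(h)$ is not what the method yields, and is not needed.

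Two secondary points. First, in Step~2 the test function $f^{j_0}\in\Harm(\Pi)^\perp$ does not satisfy the hypotheses of Proposition~\ref{prop-IV MGF lower bound}: that result requires $f\in\Harm(\Pi')^\perp$ for a square sub-domain $\Pi'\subset\Pi$ with $\dist(\Pi',\partial\Pi)\ge R/8$, meaning $\sigma=-(\Delta_\Pi)^{-1}f$ must vanish outside $(\Pi')^o$. The paper accordingly takes $f^*\in\Harm(\Pi')^\perp$ with $\Pi'$ of side length $\lfloor R/2\rfloor$ centered in $\Pi$, so that $\sigma^*=G_{\Pi'}(\cdot,j^*)$ has the right support. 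Second, the large-$|\tilde h(j_0)|$ branch of Step~3 is redundant: since $\partial\Pi\subset\Pi$, if $\max_{j\in\partial\Pi}|h_j|\ge\log R/\sqrt\beta$ then $U(h)=1$ trivially, and otherwise the maximum principle forces $|\tilde h(j_0)|<\log R/\sqrt\beta$; so your case split never invokes the regime where the concentration argument is supposed to help, and the whole burden falls on the Paley--Zygmund branch, which is where the missing uniform upper bound is required.
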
 
Before proving the proposition, we show how it implies Theorem~\ref{thm-abs max asymptotic}. We assume, without loss of generality, that $D_1 \geq 10$. Let $\gamma = D_1^{-1}$ and note that $2(1-\gamma) - 3/2 \geq 3/10$. Let $R = \lfloor L^\gamma \rfloor$, $\Pi_{0,0} = \{0,\dots R-1\}^2$, and, for integers $x,y$, $\Pi_{x,y} = \Pi_{0,0} + R\cdot(x,y)$. Let $Q = \{\Pi_{x,y} \,:\, 0 \leq x,y < L^{1-\gamma}\}$ and
\[
\partial Q := \bigcup_{0 \leq x,y < L^{1-\gamma}} \partial \Pi_{x,y}.
\]
Note that $Q$ is a collection of disjoint square sub-domains of $\Lambda$ of side-length $R$. By Lemma~\ref{lm-domain Markov property}
\begin{align*}
U_{x,y} &:= \mbb P^{\IV}_{\beta,\Lambda,\0}\left(  \max_{j \in \Pi} |m_j| \geq \frac{\log(R)}{\sqrt{\beta}} \mid m_j,\, j \in \partial \Pi_{x,y} \right) \\
	&= \mbb P^{\IV}_{\beta,\Lambda,\0}\left(  \max_{j \in \Pi} |m_j| \geq \frac{\log(R)}{\sqrt{\beta}} \mid m_j,\, j \in \partial Q \right).
\end{align*}
Let  $\mc E$ be the following event
\[
\mc E = \left\{\min_{0 \leq x,y < L^{1-\gamma}}U_{x,y} \geq R^{- 3 D_1/2} \right\}.
\]
By Lemma~\ref{lm-domain Markov property}, given $\{m_j \,:\, j \in \partial Q\}$ the restriction of the field $m$ to $\Pi_{x,y}$ is independent of the field outside $\Pi_{x,y}$. Therefore, the following holds almost surely on $\mc E$
\begin{align*}
\mbb P^{\IV}_{\beta,\Lambda,\0}\left(\max_{j \in \Lambda}|m_j| < \frac{\log(R)}{\sqrt{\beta}}\mid m_j, \, j \in \partial Q\right) &\leq (1 - R^{- 3D_1/2})^{|Q|}.
\end{align*}
Using the fact that $R \leq L^\gamma$ and $|Q| \geq L^{2(1-\gamma)}$, we conclude that
\begin{align*}
\mbb P^{\IV}_{\beta,\Lambda,\0}\left(\max_{j \in \Lambda}|m_j| < \frac{\log(R)}{\sqrt{\beta}}\mid m_j, \, j \in \partial Q\right) &\leq \exp\left(-L^{3/10}\right),
\end{align*}
almost surely on $\mc E$. Next, we show $\mc E$ occurs with high probability. Assuming $0 < \beta < \beta_0$ and $L \geq L_1 : = R_0^{1/\gamma}$, we can apply Proposition~\ref{prop-maximum epsilon proba} with $w = 3D_1/2$ and use a union bound to conclude
\[
\mbb{P}^{\IV}_{\beta,\Lambda,\0}\left(\mc E^c\right) \leq |Q| R^{-9D_1/4}.
\]
Assuming without loss of generality that $R_0 \geq 10$, we have $R \geq L^{9\gamma/10}$. Noting the trivial bound $|Q| \leq L^2$, we conclude that
\[
\mbb{P}^{\IV}_{\beta,\Lambda,\0}\left(\mc E^c\right) \leq L^{-1/40}.
\]
This concludes the proof of Theorem~\ref{thm-abs max asymptotic}. We turn now to the proof of Proposition~\ref{prop-maximum epsilon proba}.

\subsection{Proof of Proposition~\ref{prop-maximum epsilon proba}}
The proof consists of using upper and lower bounds on the moment generating function of $m$ to establish a lower bound for the tail of its distribution. The upper bound is given by the following result.
\begin{proposition}[{\cite[Proposition 1.2]{KP17}}]\label{prop-IV MGF upper bound}
	Let $\Lambda$ be a square domain, $\beta > 0$, $f \in \Harm(\Lambda)^\perp$, and $\sigma = -(\Delta_\Lambda)^{-1}f$. Then
	\[
	\E^{\IV}_{\beta,\Lambda,\0}[e^{\langle m, f \rangle}] \leq \exp\left(\frac{1}{2\beta} \langle \sigma, f\rangle\right).
	\]
\end{proposition}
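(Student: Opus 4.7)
The plan is to write out the moment generating function explicitly, complete the square using $f=-\Delta_\Lambda \sigma$, and then reduce the claim to a Fourier-analytic inequality that can be settled by Poisson summation. Unfolding the definition of the IV-GFF with zero boundary condition,
\[
\E^{\IV}_{\beta,\Lambda,\0}[e^{\langle m, f\rangle}] = \frac{1}{Z^{\IV}_{\beta,\Lambda,\0}} \sum_{g \in \mbb Z^{\Lambda^o}} \exp\!\left[\langle g, f\rangle - \frac{\beta}{2}\langle g, -\Delta_\Lambda g\rangle\right],
\]
where we view any $g\in\mbb Z^{\Lambda^o}$ as extended by $0$ on $\partial \Lambda$ and use the identity $\sum_{j\sim l}(g_j-g_l)^2 = \langle g,-\Delta_\Lambda g\rangle$. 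Since $f=-\Delta_\Lambda \sigma$ with $\sigma$ vanishing on $\partial\Lambda$, completing the square in the exponent yields
\[
\langle g,f\rangle - \frac{\beta}{2}\langle g,-\Delta_\Lambda g\rangle = \frac{1}{2\beta}\langle \sigma,f\rangle - \frac{\beta}{2}\bigl\langle g-\sigma/\beta,\,-\Delta_\Lambda(g-\sigma/\beta)\bigr\rangle.
\]
Hence, writing
\[
S(x) := \sum_{g\in\mbb Z^{\Lambda^o}} \exp\!\left[-\tfrac{\beta}{2}\langle g-x,-\Delta_\Lambda(g-x)\rangle\right],
\]
the factorization $\E^{\IV}_{\beta,\Lambda,\0}[e^{\langle m, f\rangle}] = \exp\bigl(\tfrac{1}{2\beta}\langle \sigma,f\rangle\bigr)\cdot S(\sigma/\beta)/S(0)$ reduces the proposition to proving $S(\sigma/\beta)\le S(0)$.

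I would then establish the stronger statement that $S(x)\le S(0)$ for every $x\in\mbb R^{\Lambda^o}$ by Poisson summation on $\mbb R^{\Lambda^o}$. Indeed, the Dirichlet Laplacian $-\Delta_\Lambda$ restricted to functions vanishing on $\partial\Lambda$ is positive definite, so the Gaussian $h(y) = \exp(-\tfrac{\beta}{2}\langle y,-\Delta_\Lambda y\rangle)$ is a Schwartz function whose Fourier transform is the Gaussian with covariance $\tfrac{1}{\beta}(-\Delta_\Lambda)^{-1}$. Poisson summation on the self-dual lattice $\mbb Z^{\Lambda^o}$ gives
\[
S(x) = \frac{(2\pi/\beta)^{|\Lambda^o|/2}}{\sqrt{\det(-\Delta_\Lambda)}}\sum_{k\in\mbb Z^{\Lambda^o}}\exp\!\left(-\tfrac{2\pi^2}{\beta}\langle k,(-\Delta_\Lambda)^{-1}k\rangle\right)\,e^{-2\pi i\langle k,x\rangle}.
\]
Pairing $k$ with $-k$ replaces the oscillating factor by $\cos(2\pi\langle k,x\rangle)$. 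Since all Fourier coefficients are strictly positive, the termwise bound $|\cos|\le 1$ immediately yields $S(x)\le S(0)$, which completes the proof.

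The technical step that requires the most care is the Poisson summation identity itself: one must verify that the Gaussian density decays fast enough to apply the formula (immediate from positive-definiteness of $-\Delta_\Lambda$ on $\mbb R^{\Lambda^o}$) and keep careful track of normalizations, but no estimate beyond $|\cos|\le 1$ is needed. The rest is routine algebra. Note that this argument is a discrete-volume analogue of the computation \eqref{eq-GFF MGF} for the real-valued GFF, and in fact shows that the IV-GFF MGF is pointwise dominated by the GFF MGF with the same parameters, consistent with the intuition that the integer restriction tightens the field.
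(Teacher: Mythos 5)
Your proof is correct, and it reconstructs the standard argument: complete the square to pull out the prefactor $\exp(\frac{1}{2\beta}\langle\sigma,f\rangle)$, then show the shifted theta sum $S(\sigma/\beta)\le S(0)$ via Poisson summation and $|\cos|\le 1$. The present paper does not give its own proof but defers to \cite[Proposition~1.2]{KP17}, which uses essentially this same Poisson-summation argument (the positive-definiteness of the Dirichlet Laplacian on $\mbb R^{\Lambda^o}$ being the point that makes it go through unchanged for zero boundary conditions).
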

We note that \cite[Proposition 1.2]{KP17} is stated for the field with free or periodic boundary conditions but the proof applies to the zero boundary case as well.

To state the lower bound, we introduce some notation. For a square sub-domain $\Lambda' \subset \Lambda$ we will let, by a slight abuse of notation,  $\Harm(\Lambda')^\perp \subset \Harm(\Lambda)$ be the set of functions $f:\Lambda \to \mbb R$ such that $\sigma = -(\Delta_\Lambda)^{-1}f$ satisfies $\sigma_j = 0 $ for $j \notin (\Lambda')^o$. With this notation, we have the following result.
\begin{proposition}\label{prop-IV MGF lower bound}
For any $\epsilon > 0$ there exists $\beta_0 > 0$ such that the following holds. Let $\Lambda$ be a square domain of side-length $L$, $ 0 < \beta < \beta_0$, and $h: \partial \Lambda \to \mbb Z$. Let $\Lambda' \subset \Lambda$ be a square sub-domain such that $\dist(\Lambda',\partial\Lambda) \geq L/8$. Then for all $f \in \Harm(\Lambda')^\perp$,
	\[
	\E^{\IV-\Sym}_{\beta,\Lambda,h}[e^{\langle n, f \rangle}] \geq \exp\left(\frac{1}{2(1+\epsilon)\beta} \langle \sigma, f\rangle\right),
	\]
	where $\sigma= -(\Delta_\Lambda)^{-1}f$.
\end{proposition}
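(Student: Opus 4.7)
The plan is to adapt the Fröhlich--Spencer / Kharash--Peled spin-wave proof of the zero-boundary lower bound to accommodate a nonzero boundary condition. Applying Poisson summation rewrites $Z^{\IV}_{\beta,\Lambda,h}$ and its $e^{\langle m,f\rangle}$-twist as sums over integer-valued charges $q:\Lambda^o\to\mbb Z$ of Gaussian integrals on $\mbb R^\Lambda$ with boundary $h$ and complex source $f+2\pi iq$. Decomposing $\phi = \tilde h + u$ with $u$ vanishing on $\partial\Lambda$, the cross term $\sum_{j\sim l}(\tilde h_j-\tilde h_l)(u_j-u_l)$ vanishes by harmonicity of $\tilde h$ on $\Lambda^o$, $\langle \tilde h, f\rangle = 0$ since $f\in\Harm(\Lambda)^\perp$, and completing the square in $u$ using $\sigma = -(\Delta_\Lambda)^{-1}f$ yields
\[
\E^{\IV}_{\beta,\Lambda,h}[e^{\langle m,f\rangle}] = \exp\!\left(\frac{\langle \sigma,f\rangle}{2\beta}\right) \cdot \frac{\sum_q e^{-2\pi^2\|q\|_G^2/\beta}\, e^{2\pi i\langle q,\tilde h + \sigma/\beta\rangle}}{\sum_q e^{-2\pi^2\|q\|_G^2/\beta}\, e^{2\pi i\langle q,\tilde h\rangle}},
\]
where $\|q\|_G^2 := \langle q,-(\Delta_\Lambda)^{-1}q\rangle$. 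The prefactor is precisely the target bound at $\epsilon = 0$, so the task is to show the ratio is at least $\exp(-\epsilon\langle\sigma,f\rangle/(2(1+\epsilon)\beta))$.

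Averaging this identity over boundary conditions $h$ and $-h$, as dictated by the definition of $\mbb P^{\IV-\Sym}_{\beta,\Lambda,h}$, and using the $q \to -q$ symmetry of $\|q\|_G^2$ to eliminate the imaginary parts, gives the cleaner expression
\[
\E^{\IV-\Sym}_{\beta,\Lambda,h}[e^{\langle n,f\rangle}] = \exp\!\left(\frac{\langle\sigma,f\rangle}{2\beta}\right) \cdot \frac{\sum_q e^{-2\pi^2\|q\|_G^2/\beta}\cos(2\pi\langle q,\tilde h\rangle)\cos(2\pi\langle q,\sigma/\beta\rangle)}{\sum_q e^{-2\pi^2\|q\|_G^2/\beta}\cos(2\pi\langle q,\tilde h\rangle)}.
\]
This is exactly the zero-boundary formula of \cite{KP17} except for the insertion of the bounded real factor $\cos(2\pi\langle q,\tilde h\rangle)$ into both charge sums; symmetrization has therefore converted the complex boundary phases into a purely real modification of the Coulomb-gas activities. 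The denominator is positive (being proportional to the IV-GFF partition function), and the problem reduces to comparing the two sums.

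From here the strategy is to import the multi-scale cluster/polymer expansion of \cite{FrohlichSpencer81, KP17}: for $\beta$ sufficiently small (depending on $\epsilon$) one obtains absolutely convergent expansions of $\log(\text{numerator})$ and $\log(\text{denominator})$ in which the boundary cosines enter as real activities bounded in modulus by $1$. The main obstacle is that the boundary data $h$ is arbitrary, so the activities $\cos(2\pi\langle q,\tilde h\rangle)$ can be of either sign and arbitrarily correlated with the charge geometry, which prevents a direct probabilistic interpretation of the charge sums. This is where the hypothesis $\dist(\Lambda',\partial\Lambda)\geq L/8$ is used: since $\sigma$ is supported inside $\Lambda'$, the polymer clusters responsible for the numerator/denominator difference are localized near $\supp(\sigma)$ and lie at distance of order $L$ from $\partial\Lambda$, so the boundary-dependent activities cancel between numerator and denominator up to a $(1+\epsilon)$-factor, and only the interior KP17 estimate contributes. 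Carrying out this multi-scale analysis uniformly in $h$, and tracking the $\epsilon$-loss through the scales, is the technical heart of the proof.
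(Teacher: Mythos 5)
Your opening reduction is a valid alternative to the paper's approach: the paper reaches essentially the same ratio-of-charge-sums form by introducing measures with periodic single-site weights $\lambda_j$ and taking a Fej\'er-kernel limit (Lemma~\ref{lm-weak convergence}), whereas you go straight via Poisson summation. Both arrive at the identity where the factor $\exp(\langle\sigma,f\rangle/(2\beta))$ comes out and what remains to bound is a quotient of the form $Z(\beta^{-1}\sigma)/Z(\mathbf{0})$; and your observation that symmetrizing over $\pm h$ replaces the complex boundary phases $e^{2\pi i\langle q,\tilde h\rangle}$ by the real, $[-1,1]$-valued cosines matches the role of $\E^{\GFF\text{-}\Sym}$ in the paper's $\mu_{\beta,\Lambda,\lambda_\Lambda,h}$.

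However, the heart of your argument is not carried out, and your sketch of how it would go does not match what actually makes the proof work. You propose ``absolutely convergent expansions of $\log(\text{numerator})$ and $\log(\text{denominator})$'' in which the boundary activities cancel. This is not how the Fr\"ohlich--Spencer/Kharash--Peled argument operates: a convergent logarithmic (classical cluster/Mayer) expansion of the Coulomb gas is precisely what one \emph{cannot} get at high temperature, because of the long-range correlations between vortices. The mechanism is instead the ``conditional'' or convex-combination expansion of Theorem~\ref{thm-renormalization step}, which avoids taking a logarithm entirely: the weighted integral is rewritten as a sum, over a family $\mc F$ of ensembles, of \emph{positive} factorized measures $\prod_{\rho\in\mc N}[1 + z(\beta,\rho,\mc N)\cos(\cdots)]$ with $|z|<1/8$; then Claim~\ref{claim-cos Taylor} and Jensen's inequality applied to an odd functional of $\phi$ bound each term separately. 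Crucially, the boundary condition does not ``cancel between numerator and denominator'' --- it \emph{decouples}: since $\supp(a_\rho)\subset\Lambda^o$, one has $\langle\tilde h,\Delta_\Lambda a_\rho\rangle = 0$, so the spin-wave shift is blind to $h$, and the symmetry of $\mbb P^{\IV\text{-}\Sym}_{\beta,\Lambda,h}$ under $\phi\to -\phi$ is what makes Jensen give $\E_{\mc N}[e^{S}]\geq 1$ without further comparison of $h$-dependent terms.

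You are also imprecise about how $\dist(\Lambda',\partial\Lambda)\geq L/8$ is used. It is not a geometric localization of ``clusters'' near $\supp(\sigma)$. Rather, it enters through the modified diameter $d_\Lambda(\rho)$, which for \emph{non-neutral} $\rho$ is at least $\dist(\rho,\partial\Lambda)$: this guarantees (property 3 of Theorem~\ref{thm-renormalization step}) that each ensemble $\mc N$ contains at most one charged density meeting $\Lambda'$, and that this density has $d_\Lambda(\rho_c)\geq L/16$, which makes $|z(\beta,\rho_c,\mc N)|$ small enough to absorb the quadratic $\langle\sigma,\rho_c\rangle^2$ contribution (Claim~\ref{claim-controlling non-neutral density}). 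This is a very specific bookkeeping fact about non-neutral charges, not a general cancellation.

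So: the reduction is fine and arguably cleaner than the paper's, but the proposal stops exactly where the real work begins, and the description of what that work should look like has the wrong shape. If you replace the appeal to log-expansions by the convex-combination decomposition plus Jensen, and replace the vague cancellation heuristic by the $d_\Lambda$-based control of the single admissible non-neutral density, you recover the paper's proof.
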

The proof of Proposition~\ref{prop-IV MGF lower bound} is very similar to that of \cite[Theorem 1.1]{KP17}, but we specify the necessary adjustments in Section~\ref{sec-thm1 details} and the Appendix. We also note that this is the place in the proof where we use the symmetrized field.

We are now ready to give the proof of Proposition~\ref{prop-maximum epsilon proba}. Let $\Pi = \{0,1\dots,R-1\}^2 + (a,b)$ and $j^* = (c,d)$ be the vertex such that
\[
c - a = d-b = \left\lfloor \frac{R-1}{2}\right\rfloor.
\] 
We say $j^*$ is the center of $\Pi$. Next, we let
\[
R' := \left\lfloor \frac{R}{2}\right\rfloor,
\]
and $\Pi' \subset \Pi$ be a square subdomain of side length $R'$ such that $j^*$ is the center of $\Pi'$ in the same sense. That is, we choose $(a',b') \in \Pi$ such that
\[
c - a' = d- b' = \left\lfloor \frac{R'-1}{2}\right\rfloor,
\]
and let $\Pi' = \{0,1,\dots,R'-1\}^2 + (a',b')$. We assume from now on that $R \geq 10$, which implies in particular that $R' \geq 3$ and $\dist(\Pi',\partial\Pi) \geq R/8$. We then take $f^* \in \Harm(\Pi')^\perp$ to be the following function
\[
f^*_j = \begin{cases}
	1 &j = j^*,\\
	-\Hm_{\Pi'}(j^*,j) &j \in \partial \Pi',\\
	0 &\text{otherwise}.
	\end{cases}
\]
We will use Proposition~\ref{prop-IV MGF lower bound} and Proposition~\ref{prop-IV MGF upper bound} to lower bound the probability that $|\langle m, f^*\rangle|$ is of order $\log(R)$. Before proceeding, we note that for any $v \geq 0$, $\langle n, f^*\rangle \geq 2v$ implies that either $n_{j^*} \geq v$ or there exists $j \in \partial \Pi'$ such that $n_j \leq - v$. Thus for any boundary condition $h: \partial \Pi \to \mbb Z$
\begin{align*}
\mbb{P}^{\IV}_{\beta,\Lambda,h}\left(\max_{j \in \Pi} |m_j| \geq \frac{\log(R)}{\sqrt{\beta}}\right) &= \mbb{P}^{\IV-\Sym}_{\beta,\Lambda,h}\left(\max_{j \in \Pi} |n_j| \geq \frac{\log(R)}{\sqrt{\beta}}\right)\\
	 &\geq \mbb{P}^{\IV-\Sym}_{\beta,\Lambda,h}\left(\langle n, f^* \rangle \geq \frac{2\log(R)}{\sqrt{\beta}}\right).
\end{align*}
Thus, it suffices to bound the last term in the last display from below  for ``typical'' boundary conditions. To this end, recall from Section~\ref{sec-harm perp} that $\sigma^*= - (\Delta_\Lambda)^{-1}f^*$ is given by $\sigma^*_j = G_{\Pi'}(j,j^*)$ and
\begin{equation}\label{eq-sub-domain variance}
\langle \sigma^*, f^*\rangle = G_{\Pi'}(j^*,j^*) = \frac{1}{2\pi}\log(R) + O(1), \quad \text{as} \quad R \to \infty.
\end{equation}
where the second equality follows from \eqref{eq-Green asymptotic}. Let $\beta_0 > 0$ be such that Proposition~\ref{prop-IV MGF lower bound} holds with $\epsilon = 1$. We assume from now on that $0 < \beta < \beta_0$. To simplify notation, we let 
\[
\nu = \nu(\beta,R)= \frac{1}{\sqrt{\beta}} \log(R),
\]
and $V = \langle n, f^* \rangle$. We have by Proposition~\ref{prop-IV MGF lower bound} and the Cauchy-Schwarz inequality that for any $s \geq 0$,
\begin{align*}
\exp\left(\frac{s^2}{4\beta}G_{\Pi'}(j^*,j^*)\right) &\leq \E^{\IV-\Sym}_{\beta,\Pi,h}\left[e^{sV}\right]\\
&= \E^{\IV-\Sym}_{\beta,\Pi,h}\left[e^{sV} \one_{(-\infty,2\nu]}(V)\right] + \E^{\IV-\Sym}_{\beta,\Pi,h}\left[e^{sV} \one_{(2\nu,\infty)}(V)\right]\\
&\leq e^{2\nu s} + \E^{\IV-\Sym}_{\beta,\Pi,h}\left[e^{2sV}\right]^{1/2} \mbb{P}^{\IV-\Sym}_{\beta,\Pi,h}(V \geq 2\nu)^{1/2}.
\end{align*}
Let $s_{\beta}$ be the following number
\[
s_{\beta} := 32\pi \sqrt{\beta}.
\]
Recalling \eqref{eq-sub-domain variance}, we see that there exists $R_1$ such that for all $R \geq R_1$ and $0 < \beta < \beta_0$,
\[
\exp\left(\frac{s_\beta^2}{4\beta}G_{\Pi'}(j^*,j^*)\right) - \exp(2 \nu s_\beta) \geq 1.
\]
We assume that this holds from now on. We then have for all boundary conditions $h$
\[
\mbb{P}^{\IV-\Sym}_{\beta,\Pi,h}(V \geq 2\nu) \geq \left(\E^{\IV-\Sym}_{\beta,\Pi,h}\left[e^{2s_\beta V}\right]\right)^{-1}.
\]
To conclude, we need an upper bound on the the expected value of $e^{2s_\beta V}$ that holds with high probability when the boundary condition is sampled from an IV-GFF on $\Lambda$ at inverse temperature $\beta$ with zero boundary condition. This is given by the following lemma
\begin{lemma}\label{lm-conditional MGF upper bound}
There exist positive constants $R_2$ and $D_1$ such that the following holds. Let $\Lambda$, $\beta$, $\Pi$, and $f^*$ be as above. Let $W_+$, $W_-$ be the following random variables
\[
W_{\pm} = \E^{\IV}_{\beta,\Lambda,\0}\left[ e^{\pm 2s_\beta \langle m, f^* \rangle} \mid m_j,\, j \in \partial \Pi \right].
\]
If $R \geq R_2$, then for any $w > D_1$,
\[
\mbb P^{\IV}_{\beta,\Lambda,\0}\left( \max(W_+,W_-) > R^{w}\right) \leq R^{-\frac{w^2}{D_1}}.
\]
\end{lemma}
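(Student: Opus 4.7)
The strategy is to bound high moments of $W_{\pm}$ using the moment generating function estimate of Proposition~\ref{prop-IV MGF upper bound}, and then convert these moment bounds into tail bounds via Markov's inequality after optimizing over the moment order. The key feature enabling a Gaussian-like tail $R^{-w^2/D_1}$ (rather than merely $R^{-w}$) is the choice $s_\beta = 32\pi\sqrt{\beta}$, which makes $s_\beta^2/\beta = (32\pi)^2$ a universal constant; this ensures the exponent in the $k$-th moment bound grows only as $k^2$ with a $\beta$-independent coefficient.

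First, for any real $k \geq 1$, the conditional Jensen inequality applied to the convex function $x \mapsto x^k$ on $[0,\infty)$ gives
\[
W_{\pm}^{k} \leq \E^{\IV}_{\beta,\Lambda,\0}\!\left[e^{\pm 2 k s_\beta \langle m, f^* \rangle} \,\Big|\, m_j,\, j \in \partial\Pi\right].
\]
Taking total expectation drops the conditioning. Since $\Harm(\Lambda)^\perp$ is a linear subspace and $-(\Delta_\Lambda)^{-1}(\pm 2 k s_\beta f^*) = \pm 2 k s_\beta \sigma^*$, Proposition~\ref{prop-IV MGF upper bound} applied to $\pm 2 k s_\beta f^*$ yields
\[
\E^{\IV}_{\beta,\Lambda,\0}[W_{\pm}^{k}] \leq \exp\!\left(\frac{2 k^2 s_\beta^2}{\beta}\, G_{\Pi'}(j^*, j^*)\right).
\]
Invoking $s_\beta^2/\beta = (32\pi)^2$ together with the asymptotic $G_{\Pi'}(j^*, j^*) \leq C_0 \log R$ from \eqref{eq-sub-domain variance} (valid for $R$ large enough), this collapses to $\E^{\IV}_{\beta,\Lambda,\0}[W_{\pm}^{k}] \leq R^{C k^2}$ for some absolute constant $C$.

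A standard application of Markov's inequality then gives $\mbb{P}^{\IV}_{\beta,\Lambda,\0}(W_{\pm} > R^{w}) \leq R^{C k^2 - w k}$, and setting $k = w/(2C)$ to optimize the exponent yields $R^{-w^2/(4C)}$. The choice $k \geq 1$ is legitimate whenever $w \geq 2C$, which is ensured by taking $D_1$ sufficiently large (for instance $D_1 = 8C$). A union bound over the two signs gives $\mbb{P}^{\IV}_{\beta,\Lambda,\0}(\max(W_+, W_-) > R^w) \leq 2 R^{-w^2/(4C)}$, and the factor $2$ is absorbed by the gap between $R^{-w^2/(4C)}$ and $R^{-w^2/D_1}$ as soon as $R \geq R_2$ for a sufficiently large constant $R_2$. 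I do not foresee a serious obstacle: this is a standard sub-Gaussian moment method, and the bookkeeping reduces to checking that $s_\beta^2/\beta$ is a pure number and that $G_{\Pi'}(j^*,j^*)$ grows only logarithmically in $R$, both of which are already in hand.
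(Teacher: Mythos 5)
Your proposal is correct and follows essentially the same route as the paper: conditional Jensen to bound moments of $W_\pm$, then Proposition~\ref{prop-IV MGF upper bound} to control the unconditional moment generating function, then Markov's inequality optimized over the moment order, using that $s_\beta^2/\beta$ is a pure constant and $G_{\Pi'}(j^*,j^*) = O(\log R)$ to get the sub-Gaussian tail $R^{-w^2/D_1}$.
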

Provided with the lemma, we conclude the proof of Proposition~\ref{prop-maximum epsilon proba} as follows. Assume $R \geq R_0 : = \max(R_1,R_2)$ and let $h: \partial \Pi \to \mbb Z$ be given by $h_j = m_j$ for all $j \in \partial \Pi$. Note that
\[
\E^{\IV-\Sym}_{\beta,\Pi,h}\left[e^{2s_\beta V}\right] = \frac{W_+ + W_-}{2},
\]
which gives 
\[
U \geq \left(E^{\IV-\Sym}_{\beta,\Pi,h}\left[e^{2s_\beta V}\right]\right)^{-1} \geq \max(W_+,W_-)^{-1}.
\]
It follows immediately that
\[
\mbb P^{\IV}_{\beta,\Lambda,\0}( U < R^{-w}) \leq \mbb P^{\IV}_{\beta,\Lambda,\0}\left( \max(W_+,W_-) > R^w\right) \leq R^{-\frac{w^2}{D_1}}\,
\] 
as required.
\begin{proof}[Proof of Lemma~\ref{lm-conditional MGF upper bound}]
The proof is an application of Markov's inequality. By Jensen's inequality for conditional expectations and Proposition~\ref{prop-IV MGF upper bound}, we have for any $p \geq 1$
\[
\E^{\IV}_{\beta,\Lambda,\0}[W_+^p] \leq \E^{\IV}_{\beta,\Lambda,\0}[e^{2ps_\beta V}] \leq \exp\left(\frac{p^2s^2_\beta}{2\beta}G_{\Pi'}(j^*,j^*)\right).
\]
Therefore,
\begin{align*}
\mbb P^{\IV}_{\beta,\Lambda,\0}\left(W_+ \geq e^x\right) &\leq \inf_{p \geq 1} e^{-px}\cdot \E^{\IV}_{\beta,\Lambda,\0}[W_+^p]\\
&\leq\exp\left(\inf_{p \geq 1} \frac{p^2 s^2_\beta}{2\beta}G_{\Pi'}(j^*,j^*) - px\right) \\
&= \exp\left( - \frac{\beta x^2}{2s^2_\beta G_{\Pi'}(j^*,j^*)}\right), \quad x \geq \frac{s^2_\beta}{\beta}G_{\Pi'}(j^*,j^*)\,.
\end{align*}
By \eqref{eq-sub-domain variance} and the definition $s_\beta = 2^5 \pi \sqrt{\beta}$, there exists $R_2$ such that for $R \geq R_2$
\[
\frac{s^2_\beta}{\beta} G_{\Pi'}(j^*,j^*) = 2^{9}\pi\log(R) + O(1) \leq 2^{10}\pi \log(R).
\]
Therefore, if we let $D_1 = 2^{12}\pi$, we see that for $R \geq R_2$ and $w > D_1$
\[
\mbb P^{\IV}_{\beta,\Lambda,\0}\left(W_+ \geq R^w\right) \leq \exp\left( - \frac{2w^2\log(R)}{D_1}\right)= R^{-\frac{2w^2}{D_1}}\,.
\]
The same bound holds for $W_-$ by symmetry, so a union bound concludes the proof.
\end{proof}
\subsection{Proof of Theorem~\ref{thm-abs max asymptotic} for free and periodic boundary condition}
The argument in this section applies almost without changes to the field with free or periodic boundary condition. Specifically, let $m$ have law $\mbb P^{\IV}_{\beta,\Lambda,v}$ and $Q$ and $\gamma$ be as above. Since the Markov field property still holds it remains true that the variables $\{U_{x,y} \,;\, 0 \leq x,y < L^{1-\gamma}\}$ are conditionally independent given $\{m_j \,:\, j \in \partial Q\}$. Moreover, for $0 \leq x,y < L^{1-\gamma}$ the restriction $m$ to $\Pi_{x,y}$ is distributed as an IV-GFF with boundary condition given by $\{m_j \,:\, j \in \partial \Pi_{x,y}\}$, except possibly for one value of the pair $(x,y)$ such that $v \in (\Pi^{x,y})^{o}$. The argument proceeds by ignoring this sub-domain.

\small

\section{Proof of Proposition~\ref{prop-IV MGF lower bound}}\label{sec-thm1 details}
\subsection{Discrete Gaussian Free Field with Periodic Single-Site Weights}
To prove Proposition~\ref{prop-IV MGF lower bound}, we approximate the integer-valued GFF by a \emph{discrete Gaussian free field with periodic single-site weights}, which we now introduce. We say that $\lambda : \mbb{R} \to \mbb{R}$ is a \emph{real, even, normalized trigonometric polynomial} if
\[
\lambda(x) = 1 + 2\sum_{q = 1}^N\hat\lambda_q \cos(qx),
\]
for some integer $N > 0$ and real $(\hat\lambda_q)_{1 \leq q \leq N}$. For notational convenience, we set $\hat\lambda_q = 0$ for $q > N$. We will restrict our attention to polynomials whose coefficients don't grow too quickly, in the sense of the following definition.
\begin{definition}\label{def-SubGaussPoly}
For a given $\beta > 0$, we say that a real, even, normalized polynomial $\lambda: \mbb{R} \to \mbb{R}$ is $(\Gamma,\eta,\theta)$-sub-Gaussian if
\[
|\hat\lambda_q| \leq \Gamma \cdot \exp\left[\left(\eta + \frac{\theta}{\beta}\right) q^2\right], \quad q \geq 1.
\]
\end{definition}
Throughout the paper, all trigonometric polynomials will be assumed to be real, even, normalized, and $(\Gamma, \eta, \theta)$-sub-Gaussian for some $\Gamma > 0$, $\eta \in \mbb{R}$, and $0 \leq \theta < 1/16$. Let $\Lambda$ be a square domain, $\beta > 0$, $h: \partial \Lambda \to \mbb R$, and $\lambda_\Lambda := (\lambda_j)_{j \in \Lambda^o}$ be a collection of trigonometric polynomials. We then define a (not necessarily positive) measure $\mu_{\beta,\Lambda,\lambda_\Lambda,h}$ on functions $\phi: \Lambda \to R$ by
\[
\mu_{\beta,\Lambda,\lambda_\Lambda,h}(A) = \frac{1}{Z_{\beta,\Lambda,\lambda_\Lambda,h}} \E^{\GFF-\Sym}_{\beta,\Lambda,h}\left[\one_{A}(\phi) \prod_{j \in \Lambda^o} \lambda_j(\phi_j)\right],
\]
where 
\[
Z_{\beta,\Lambda,\lambda_\Lambda,h} =\E^{\GFF-\Sym}_{\beta,\Lambda,h}\left[\prod_{j \in \Lambda^o} \lambda_j(\phi_j)\right].
\]
It follows from Theorem~\ref{thm-renormalization step} below that $Z_{\beta,\Lambda,\lambda_\Lambda,h} > 0$ so that $\mu_{\beta,\Lambda,\lambda_\Lambda,h}$ is well defined. We denote by $\E_{\beta,\Lambda,\lambda_\Lambda,h}$ the integration against $\mu_{\beta,\Lambda,\lambda_\Lambda,h}$ operation. 

The following theorem is an analog of \cite[Theorem 1.5]{KP17} for the field pinned at the boundary of a box (rather than at a single vertex as in the periodic or free boundary case).
\begin{theorem}\label{thm-MGF lower bound}
For any $\Gamma > 0$, $\eta \in \mbb{R}$, $0 \leq \theta < 1/16$, and $\epsilon > 0$, there exists $\beta_0 > 0$ such that the following holds. Let $\Lambda$ be a square domain, $h \in \Harm(\Lambda)$, $0 < \beta < \beta_0$, and $\lambda_\Lambda$ be a collection of $(\Gamma,\eta,\theta)$-sub-Gaussian polynomials. Let $\Lambda' \subset \Lambda$ be a square sub-domain such that $\dist(\Lambda',\partial\Lambda) \geq L/8$. If $f \in \Harm(\Lambda')^\perp$ and $\sigma = -(\Delta_\Lambda)^{-1}f$, then
\[
\E_{\beta,\Lambda,\lambda_\Lambda, h}[e^{\langle \phi,f\rangle}] \geq \exp\left[\frac{1}{2(1+\epsilon)\beta}\langle  \sigma, f\rangle\right].
\]
\end{theorem}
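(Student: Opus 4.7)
The plan is to reduce the claim to a comparison of partition functions under a deterministic shift, and then iterate the single-scale renormalization step (Theorem~\ref{thm-renormalization step}) as in Fr\"ohlich--Spencer and \cite{KP17}. Concretely, the change-of-variables identity \eqref{eq-GFF MGF} passes to the symmetrized GFF by averaging the formula over the boundary data $\pm h$; applying it with $g(\phi)=\prod_{j\in\Lambda^o}\lambda_j(\phi_j)$ yields
\begin{equation*}
\E^{\GFF-\Sym}_{\beta,\Lambda,h}\!\!\left[e^{\langle\phi,f\rangle}\prod_{j\in\Lambda^o}\lambda_j(\phi_j)\right]
=\exp\!\left(\tfrac{1}{2\beta}\langle\sigma,f\rangle\right)\,
\E^{\GFF-\Sym}_{\beta,\Lambda,h}\!\!\left[\prod_{j\in\Lambda^o}\lambda_j\!\!\left(\phi_j+\tfrac{\sigma_j}{\beta}\right)\right].
\end{equation*}
Dividing both sides by $Z_{\beta,\Lambda,\lambda_\Lambda,h}$, the theorem becomes equivalent to
\begin{equation*}
R\;:=\;\frac{\E^{\GFF-\Sym}_{\beta,\Lambda,h}\!\left[\prod_{j}\lambda_j(\phi_j+\sigma_j/\beta)\right]}{\E^{\GFF-\Sym}_{\beta,\Lambda,h}\!\left[\prod_{j}\lambda_j(\phi_j)\right]}\;\ge\;\exp\!\left(-\tfrac{\epsilon}{2(1+\epsilon)\beta}\langle\sigma,f\rangle\right).
\end{equation*}
Since $\langle\sigma,f\rangle=\langle f,(-\Delta_\Lambda)^{-1}f\rangle\ge 0$, this is a mild lower bound: heuristically, the $\lambda$-twisted Gaussian behaves like a pure Gaussian at a renormalized inverse temperature $\beta_{\mathrm{eff}}\ge\beta$, and the desired inequality is equivalent to $\beta_{\mathrm{eff}}\le(1+\epsilon)\beta$.

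To control $\beta_{\mathrm{eff}}$, I would iterate Theorem~\ref{thm-renormalization step}. Each application integrates out GFF degrees of freedom at one scale and outputs (i) a residual GFF on a coarser lattice, (ii) an explicit Gaussian factor, and (iii) a new collection of sub-Gaussian polynomials $(\lambda'_j)$ on the coarser lattice still satisfying a $(\Gamma',\eta',\theta')$-condition with $\theta'<1/16$ preserved. After $K\asymp\log(1/\beta)$ iterations the polynomials trivialize (up to negligible remainder) and what remains is a pure Gaussian at effective inverse temperature $\beta_{\mathrm{eff}}=\beta(1+O(\sqrt{\beta}))$. Running this scheme in parallel on the shifted and unshifted partition functions defining $R$, the per-scale Gaussian factors aggregate to $R\approx\exp\!\bigl((\tfrac{1}{2\beta_{\mathrm{eff}}}-\tfrac{1}{2\beta})\langle\sigma,f\rangle\bigr)$, which is precisely the value that a renormalized pure Gaussian would predict. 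For $\beta$ small enough, $\beta_{\mathrm{eff}}\le(1+\epsilon)\beta$, delivering the required lower bound on $R$.

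The only novelty relative to \cite[Theorem~1.5]{KP17} is the nonzero boundary datum $h$. Because $f\in\Harm(\Lambda')^\perp$, the shift $\sigma/\beta$ is supported inside $(\Lambda')^o$, hence at distance $\ge L/8$ from $\partial\Lambda$; in particular, every single-site polynomial $\lambda_j(\cdot+\sigma_j/\beta)$ with $j$ outside $(\Lambda')^o$ is unchanged by the shift. The harmonic extension $\tilde h$ enters the renormalization only as a deterministic background for $\phi$ and does not couple to the shift, so the local Green's-function estimates at the heart of the scheme are identical to the zero-boundary case on the relevant neighborhood of $\supp(\sigma)$. The main obstacle is therefore the single-scale renormalization step itself --- proving that one integration produces a new sub-Gaussian collection with controllable parameters together with only a small, explicit Gaussian correction. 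Following \cite{KP17}, I would take this step as a black box (Theorem~\ref{thm-renormalization step}) and concentrate on iterating it cleanly in the presence of the shift and the background $\tilde h$, verifying in particular that $\lambda_j(\cdot+\sigma_j/\beta)$ remains $(\Gamma'',\eta'',\theta)$-sub-Gaussian (possibly with inflated $\Gamma'',\eta''$ but the same $\theta$) so that the inductive scheme can be initialised on both the numerator and denominator of $R$.
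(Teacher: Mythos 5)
Your first reduction is correct and identical to the paper's: the change-of-variables identity \eqref{eq-GFF MGF} extends to the symmetrized field, so the theorem reduces to showing that the shifted partition function ratio $R = Z_{\beta,\Lambda,\lambda_\Lambda,h}(\beta^{-1}\sigma)/Z_{\beta,\Lambda,\lambda_\Lambda,h}(\0)$ is at least $\exp\bigl(-\tfrac{\epsilon}{2(1+\epsilon)\beta}\langle\sigma,f\rangle\bigr)$. From there, however, your plan diverges from what the available machinery actually provides, and I believe it has a genuine gap. You propose to \emph{iterate} Theorem~\ref{thm-renormalization step}, asserting that each application ``integrates out GFF degrees of freedom at one scale'' and returns a coarser lattice carrying a new collection of single-site sub-Gaussian polynomials, so that after $K\asymp\log(1/\beta)$ passes the polynomials trivialize and one reads off an effective $\beta_{\mathrm{eff}}$. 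But Theorem~\ref{thm-renormalization step} is not a block-spin coarse-graining step of that kind: it is a one-shot decomposition on the \emph{same} lattice, expressing $\E^{\GFF}_{\beta,\Lambda,\0}\bigl[\prod_j\lambda_j(\phi_j+g_j)\bigr]$ as a convex combination $\sum_{\mc N}c_{\mc N}\E^{\GFF}_{\beta,\Lambda,\0}\bigl[\prod_{\rho\in\mc N}[1+z(\beta,\rho,\mc N)\cos(\langle\phi,\rho+\Delta_\Lambda a_\rho\rangle+\langle g,\rho\rangle)]\bigr]$, with ensembles of charge densities replacing the single-site $\lambda_j$'s. There is no residual GFF on a coarser lattice and no new $(\Gamma',\eta',\theta')$-sub-Gaussian collection to feed back in. (The multi-scale flavour you have in mind lives entirely \emph{inside} the proof of Theorem~\ref{thm-renormalization step} via the $(k-1)\to k$ ensemble refinement, not at the level of its application.) Without a separate block-spin RG theorem, the iteration you describe cannot be carried out, and the ``$\beta_{\mathrm{eff}}=\beta(1+O(\sqrt\beta))$'' conclusion is left unsupported.

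What the paper actually does after the reduction is quite different and you never touch it: apply Theorem~\ref{thm-renormalization step} \emph{once}, reduce to bounding $Z_{\mc N}(\beta^{-1}\sigma)/Z_{\mc N}(\0)$ for each fixed ensemble $\mc N$, use the elementary inequality of Claim~\ref{claim-cos Taylor} for $1+z\cos(x+y)$ to peel off an explicit Gaussian-type factor plus an expectation $\E_{\mc N}[e^{S(\mc N,\phi)}]$, kill the latter by Jensen's inequality together with the sign symmetry of the symmetrized GFF (this is exactly where symmetrization earns its keep), and finally bound $\sum_{\rho}|z(\beta,\rho,\mc N)|\langle\sigma,\rho\rangle^2 \le \tfrac{\beta}{D}\sum_{j\sim l}(\sigma_j-\sigma_l)^2$ via Claim~\ref{claim-coefficient times inner product} (neutral densities) and Claim~\ref{claim-controlling non-neutral density} (the single charged density). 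Your observation that $\dist(\Lambda',\partial\Lambda)\ge L/8$ matters is correct, but for a different reason than you suggest: it is used in Claim~\ref{claim-controlling non-neutral density} to ensure, via property 3 of Theorem~\ref{thm-renormalization step}, that at most one charged density $\rho_c$ has $\langle\rho_c,\sigma\rangle\ne 0$, and that its $d_\Lambda(\rho_c)\ge L/16$ makes the corresponding $|z|$ super-polynomially small. You would need to reproduce all of these ingredients; the sketch as written does not supply them or an adequate substitute.
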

Assuming the theorem holds, the proof of Proposition~\ref{prop-IV MGF lower bound} is identical to the proof of \cite[Theorem 1.1]{KP17} given in \cite[Section 5.1]{KP17}. We include the details here for completeness. We begin by introducing some notation. Let $\mbb R^\Lambda$ (resp. $\mbb Z^\Lambda$) be the set of real-valued (resp. integer-valued) functions on $\Lambda$. For $h: \partial \Lambda \to \mbb R$ we let $\mbb R^\Lambda_{h}$ and $\mbb Z^{\Lambda}_{h}$ be the following sets
\begin{align*}
\mbb R^\Lambda_{h} &:= \{g \in \mbb R^{\Lambda} \,:\, g_j = h_j,\,\, j \in \partial \Lambda\},\\
\mbb Z^\Lambda_{h} &:= \{g \in \mbb R^{\Lambda}_{h} \,:\, g_j \in \mbb Z,\,\, j \in \Lambda^o\}.
\end{align*}
For $g \in \mbb Z^\Lambda_{h}$, we let $\Omega_{g} \subset \mbb R^\Lambda_{2\pi h}$ be the set of functions $\phi$ satisfying
\begin{align*}
\phi_j - 2\pi g_j &\in [-\pi,\pi),\quad j \in \Lambda^o.
\end{align*}
Finally, we let $(F_N)_{N \geq 1}$ be the Fej\'{e}r kernel
\[
F_N(x) := 1 + \sum_{q = 1}^{N-1}2 \left(1 - \frac{q}{N}\right) \cos(qx).
\]
Note that $F_N$ is a positive summability kernel (see \cite[Chapter 1]{Katznelson04}) so the following holds. For $g \in \mbb Z^\Lambda_{h}$ and $\Psi: \mbb R^\Lambda_{h} \to \mbb R$ a continuous function,
\begin{equation}\label{eq-summability kernel property}
\lim_{N \to \infty} (2\pi)^{-|\Lambda^o|}\int_{\Omega_g} \Psi(\phi) \prod_{j \in \Lambda^o} F_N(\phi_j) d\phi_j = \Psi(2\pi g).
\end{equation}
With these notations, we can prove the following.
\begin{lemma}\label{lm-weak convergence}
Let $\beta > 0$, $\Lambda$ be a square domain, and $h : \partial \Lambda \to \mbb Z$. Let $\lambda_{\Lambda,N}$ be the collection of polynomials such that $\lambda_j = F_N$ for all $j \in \Lambda^o$. For any $f \in \Harm(\Lambda)^\perp$,
\[
\lim _{N \to \infty} \E_{\beta/(2\pi)^2,\Lambda,\lambda_{N},2\pi h}\left[e^{\langle \phi,\frac{1}{2\pi}f\rangle}\right] = \E^{\IV-\Sym}_{\beta,\Lambda,h}\left[e^{\langle m, f\rangle}\right].
\]
\end{lemma}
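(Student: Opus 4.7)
The plan is to unfold the measure $\mu_{\beta/(2\pi)^2,\Lambda,\lambda_N,2\pi h}$ as a ratio of Gaussian integrals, decompose the integration region along the $2\pi$-translates indexed by $g \in \mbb Z^\Lambda_h$, use $2\pi$-periodicity of the Fej\'er kernel $F_N$ to reduce each piece to an integral over a single box $[-\pi,\pi)^{\Lambda^o}$, and then pass to the limit $N\to\infty$ via \eqref{eq-summability kernel property}. Since $\mbb P^{\GFF-\Sym}_{\beta/(2\pi)^2,\Lambda,2\pi h} = \tfrac12[\mbb P^{\GFF}_{\beta/(2\pi)^2,\Lambda,2\pi h}+\mbb P^{\GFF}_{\beta/(2\pi)^2,\Lambda,-2\pi h}]$ and the two component measures live on disjoint affine sets, I will split the numerator and denominator defining $\E_{\beta/(2\pi)^2,\Lambda,\lambda_N,2\pi h}[e^{\langle \phi, f/(2\pi)\rangle}]$ into a ``$+h$'' and a ``$-h$'' contribution and analyze each separately; the two cases are handled identically, so I focus on the $+h$ piece.

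For the $+h$ piece I partition $\mbb R^\Lambda_{2\pi h} = \bigsqcup_{g\in\mbb Z^\Lambda_h}\Omega_g$ and on $\Omega_g$ substitute $\phi = \psi + 2\pi g$ with $\psi_j\in[-\pi,\pi)$ for $j\in\Lambda^o$ and $\psi_j=0$ for $j\in\partial\Lambda$. Using $2\pi$-periodicity, $\prod_j F_N(\phi_j)=\prod_j F_N(\psi_j)$; the splitting $\langle\phi,f/(2\pi)\rangle = \langle g,f\rangle + \langle\psi,f/(2\pi)\rangle$; and the identity $\phi_j-\phi_l = (\psi_j-\psi_l)+2\pi(g_j-g_l)$, each term of the resulting $g$-sum factors as $e^{\langle g,f\rangle}\exp[-\tfrac{\beta}{2}\sum_{j\sim l}(g_j-g_l)^2]$ times a $\psi$-dependent factor which is continuous on $[-\pi,\pi)^{\Lambda^o}$ and takes the value $1$ at $\psi=0$. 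By \eqref{eq-summability kernel property}, the integral of this $\psi$-dependent factor against $\prod_j F_N(\psi_j)\,d\psi_j/(2\pi)^{|\Lambda^o|}$ therefore converges to $1$ as $N\to\infty$, so each individual summand in $g$ has the desired limit.

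The main technical step is exchanging $\lim_N$ with the sum over $g$, which I will justify by dominated convergence. Using the elementary inequality $(a+b)^2\geq \tfrac12 a^2 - 2b^2$ with $a=2\pi(g_j-g_l)$ and $|b|=|\psi_j-\psi_l|\leq 2\pi$, one obtains uniformly in $\psi\in[-\pi,\pi)^{\Lambda^o}$
\[
\exp\!\left[-\frac{\beta}{2(2\pi)^2}\sum_{j\sim l}(\psi_j-\psi_l+2\pi(g_j-g_l))^2\right] \leq C_\Lambda\exp\!\left[-\frac{\beta}{4}\sum_{j\sim l}(g_j-g_l)^2\right].
\]
Combined with the trivial bound $|e^{\langle g,f\rangle}|\leq e^{\|f\|_1\|g\|_\infty}$, and using that on a connected graph with fixed boundary values the discrete Dirichlet form $\sum_{j\sim l}(g_j-g_l)^2$ dominates $\|g\|_\infty^2$ up to an additive constant depending only on $h$, the resulting majorant is summable over $g\in\mbb Z^\Lambda_h$, and dominated convergence applies.

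Assembling the pieces, the $+h$ numerator converges to $(2\pi)^{|\Lambda^o|}(Z^{\GFF}_{\beta/(2\pi)^2,\Lambda,2\pi h})^{-1}Z^{\IV}_{\beta,\Lambda,h}\,\E^{\IV}_{\beta,\Lambda,h}[e^{\langle m,f\rangle}]$ and the $+h$ denominator to the same expression with $f=0$, so their ratio tends to $\E^{\IV}_{\beta,\Lambda,h}[e^{\langle m,f\rangle}]$; analogously the $-h$ ratio tends to $\E^{\IV}_{\beta,\Lambda,-h}[e^{\langle m,f\rangle}]$. The substitution $\phi\mapsto-\phi$ gives $Z^{\GFF}_{\beta/(2\pi)^2,\Lambda,2\pi h}=Z^{\GFF}_{\beta/(2\pi)^2,\Lambda,-2\pi h}$ and $Z^{\IV}_{\beta,\Lambda,h}=Z^{\IV}_{\beta,\Lambda,-h}$, so the $+h$ and $-h$ denominators share the same positive limit; combining the two halves gives $\tfrac12(\E^{\IV}_{\beta,\Lambda,h}+\E^{\IV}_{\beta,\Lambda,-h})[e^{\langle m,f\rangle}] = \E^{\IV-\Sym}_{\beta,\Lambda,h}[e^{\langle m,f\rangle}]$, which is the claim. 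The only nontrivial obstacle is the uniform dominating bound used for the interchange; everything else is a direct change of variables followed by an application of \eqref{eq-summability kernel property}.
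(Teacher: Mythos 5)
Your proof is correct and follows essentially the same route as the paper: split into the $\pm h$ contributions of the symmetrized Gaussian measure, partition $\mbb R^\Lambda_{\pm 2\pi h}$ into the translates $\Omega_g$, apply \eqref{eq-summability kernel property} summand-by-summand, and recognize the resulting $g$-sums as $Z^{\IV}_{\beta,\Lambda,\pm h}\,\E^{\IV}_{\beta,\Lambda,\pm h}[e^{\langle m,f\rangle}]$. The one place you go beyond the paper is the explicit dominated-convergence justification for exchanging $\lim_N$ with the infinite sum over $g$: the paper passes the limit through the sum without comment, whereas your bound $\sup_{\phi\in\Omega_g}|\Psi(\phi)|\leq C\,e^{\langle g,f\rangle-\frac{\beta}{4}\sum_{j\sim l}(g_j-g_l)^2}$, obtained from $(a+b)^2\geq\tfrac12 a^2-2b^2$ together with $F_N\geq 0$ and $\frac{1}{2\pi}\int_{-\pi}^\pi F_N=1$, gives a $N$-uniform, $g$-summable majorant. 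This is a genuine (if routine) gap in the paper's presentation that your proposal correctly fills; everything else is the same calculation, and your observation that $E^+(\mathbf 0,N)=E^-(\mathbf 0,N)$ lets you treat the two ratios separately before averaging, which is equivalent to the paper's single ratio $\bigl(E^+(f,N)+E^-(f,N)\bigr)/\bigl(E^+(\mathbf 0,N)+E^-(\mathbf 0,N)\bigr)$.
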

Combined with Theorem~\ref{thm-MGF lower bound} and the fact that $F_N$ is $(1,0,0)$-sub-Gaussian for each $N \geq 1$, this lemma immediately implies Proposition~\ref{prop-IV MGF lower bound}.
\begin{proof}[Proof of Lemma~\ref{lm-weak convergence}]
Let $E^\pm$ be the following functions
\begin{align*}
E^{\pm}(f,N) = \E^{\GFF}_{\beta/(2\pi)^2,\Lambda, \pm 2\pi h}\left[e^{\langle \phi,\frac{1}{2\pi}f\rangle} \prod_{j \in \Lambda^o} F_N(\phi_j)\right].
\end{align*}
Then
\begin{equation}\label{eq-tilted MGF}
\E_{\beta/(2\pi)^2,\Lambda,\lambda_{N},2\pi h}\left[e^{\langle \phi,\frac{1}{2\pi}f\rangle}\right] = \frac{E^+(f,N) + E^-(f,N)}{E^+(\0,N) + E^-(\0,N)}.
\end{equation}
Recall that for $g \in \mbb R^\Lambda$
\[
\sum_{j \sim l} (g_j - g_l)^2 = \langle g, -\Delta_{\Lambda} g\rangle.
\]
Let $Z_{\beta, \Lambda,h}$ be as in \eqref{eq-GFF density} and note that $Z_{\beta,\Lambda, h} = Z_{\Lambda,\beta,-h}$. Therefore we have,
\begin{align*}
E^{\pm}(f,N) &= Z^{-1}_{\beta,\Lambda,h}\int_{\phi \in \mbb R^{\Lambda}_{\pm 2\pi h}} e^{\frac{1}{2\pi}\langle \phi,f\rangle - \frac{\beta}{2 (2\pi)^2}\langle \phi, - \Delta_\Lambda \phi\rangle} \prod_{j \in \Lambda^o} F_N(\phi_j)d\phi_j,\\
&= Z^{-1}_{\beta,\Lambda,h}\sum_{g \in \mbb Z^\Lambda_{\pm h}} \int_{\Omega_g} e^{\frac{1}{2\pi}\langle \phi,f\rangle - \frac{\beta}{2 (2\pi)^2}\langle \phi, - \Delta_\Lambda \phi\rangle} \prod_{j \in \Lambda^o} F_N(\phi_j)d\phi_j.
\end{align*}
Applying \eqref{eq-summability kernel property} with
\[
\Psi(\phi) = \exp\left(\frac{1}{2\pi}\langle \phi,f\rangle - \frac{\beta}{2(2\pi)^2}\langle \phi,-\Delta_\Lambda \phi\rangle\right),
\]
we obtain
\begin{align*}
\lim_{N \to \infty} E^{\pm}(f,N)
	&= \frac{(2\pi)^{|\Lambda^o|}}{Z_{\beta,\Lambda, h}}\sum_{g \in \mbb Z^\Lambda_{\pm h}} e^{\langle g, f\rangle - \frac{\beta}{2}\langle g, -\Delta_{\Lambda} g\rangle}\,.
\end{align*}
Plugging this into \eqref{eq-tilted MGF} we see
\[
\lim _{N \to \infty} \E_{\beta/(2\pi)^2,\Lambda,\lambda_{N},2\pi h}\left[e^{\langle \phi,\frac{1}{2\pi}f\rangle}\right]
	= \frac{1}{2 Z_{\beta,\Lambda,h}^{\IV}} \cdot \sum_{g \in \mbb{Z}^\Lambda_{\pm h'}} e^{\langle g, f\rangle - \frac{\beta}{2}\langle g, -\Delta_{\Lambda} g\rangle},
\]
where
\[
Z^{\IV}_{\beta,\Lambda,h} = Z^{\IV}_{\beta,\Lambda,-h} = \sum_{g \in \mbb Z^\Lambda_{h}} e^{-\frac{\beta}{2}\langle g, -\Delta_{\Lambda} g\rangle}.
\]
This concludes the proof.
\end{proof}
The rest of this section is therefore devoted to proving Theorem~\ref{thm-MGF lower bound}. By \eqref{eq-GFF MGF}
\begin{align*}
\E_{\beta,\Lambda,\lambda_\Lambda, h}[e^{\langle \phi, f \rangle}] &=
	Z_{\beta,\Lambda,\lambda_\Lambda, h}^{-1}\cdot \E^{\GFF-\Sym}_{\beta,\Lambda, h}\left[e^{\langle \phi, f \rangle} \prod_{j \in \Lambda^o} \lambda_j(\phi_j)\right]\\
	&= \exp\left(\frac{1}{2\beta}\langle \sigma, f\rangle\right)\cdot Z_{\beta,\Lambda,\lambda_\Lambda, h}^{-1}\cdot\E^{\GFF-\Sym}_{\beta,\Lambda, h} \left[\prod_{j \in \Lambda^o} \lambda_j\left(\phi_j+\frac{\sigma_j}{\beta}\right)\right],\\
	&= \exp\left(\frac{1}{2\beta}\langle \sigma, f\rangle\right)\cdot\frac{Z_{\beta,\Lambda,\lambda_\Lambda,h}(\beta^{-1}\sigma)}{Z_{\beta,\Lambda,\lambda_\Lambda,h}(\0)}
\end{align*}
where
\[
Z_{\beta,\Lambda,\lambda_\Lambda,h}(g) := \E^{\GFF-\Sym}_{\beta,\Lambda, h}\left[\prod_{j \in \Lambda^o} \lambda_j(\phi_j + g_j)\right].
\]
To conclude the proof, we need to show that for every $\epsilon > 0$ there exists $\beta_0 > 0$ such that the following holds for $\beta < \beta_0$
\[
\frac{Z_{\beta,\Lambda,\lambda_\Lambda,h}(\beta^{-1}\sigma)}{Z_{\beta,\Lambda,\lambda_\Lambda,h}(\0)} \geq \exp\left(- \frac{\epsilon}{2(1+\epsilon)\beta} \langle \sigma, f\rangle\right)\,.
\]

\subsection{Renormalization step}
The main step in the proof of Theorem~\ref{thm-MGF lower bound} consists of expressing the integral against $\mu_{\beta,\Lambda,\lambda_\Lambda,h}$ as a convex combination of integrals against positive measures. This is analogous to the proof of \cite[Theorem 1.5]{KP17} using \cite[Theorem 1.6]{KP17}. We begin with some definitions. The support of a function $g: \Lambda \to \mbb R$ is
\[
\supp(g) := \{j \in \Lambda \,:\, g_j \neq 0\}.
\]
A \emph{(charge) density} is a function $\rho: \Lambda \to \mbb Z$ such that $\supp(\rho) \neq \emptyset$ and $\supp(\rho) \subset \Lambda^o$. An \emph{ensemble} is a finite (possibly empty) collection of charge densities whose supports are mutually disjoint. The \emph{charge} $Q(\rho)$ of a density $\rho$ is defined by
\[
Q(\rho) := \sum_{j \in \Lambda^o} \rho_j.
\]
A density $\rho$ is called \emph{neutral} if $Q(\rho) = 0$; otherwise it is said to be \emph{charged} or \emph{non-neutral}. The diameter of a charge density is
\[
d(\rho) := \max \{\dist(j,l) \,:\, j,l \in \supp(\rho)\},
\]
where as usual $\dist$ denotes the graph distance. The following modified diameter will also be used in the proof
\[
d_{\Lambda}(\rho) = \begin{cases}
\max(d(\rho), \dist(\rho,\partial \Lambda)), &Q(\rho) \neq 0,\\
d(\rho) &Q(\rho) = 0.
\end{cases}
\]
Above and throughout the rest of the paper, $\dist(\rho, A) = \dist(\supp(\rho),A)$. Note that $d_\Lambda(\rho) \geq 1$ for any $\rho$. For each density $\rho$, if $d_{\Lambda}(\rho) = d(\rho)$ let $j \in \supp \rho$ be such that there exists $l \in \supp \rho$ with $d(\rho) = \dist(j,l)$. If $d_{\Lambda}(\rho) \neq d(\rho)$ let $j \in \supp \rho$ be such that $\dist(j,\partial \Lambda) = d_\Lambda(\rho)$. In both cases, $j$ is chosen in some fixed arbitrary way if there is more than one possible choice. We define
\[
D(\rho) = D_j(\rho) := \{ l \in \Lambda \,:\, \dist(j,l) < 2d_\Lambda(\rho)\},
\]
and say that $j$ is the \emph{center} of $D(\rho)$. Note that $\supp \rho \subset D(\rho)$. Finally, we denote
\[
||\rho||_2 := \sqrt{\sum_{j \in \Lambda^o} \rho_j^2}.
\]
With this notation in place, we can state the renormalization theorem. It is an analog of \cite[Theorem 1.6]{KP17} for the field with zero-boundary condition, and was stated with a detailed outline of the proof in the original paper of Fr\"{o}hlich and Spencer \cite[Appendix D]{FrohlichSpencer81}. For the reader's convenience, we provide the details of the proof (following the notation and presentation in \cite{KP17}) in the appendix.
\begin{theorem}\label{thm-renormalization step}
Let $\Gamma > 0$, $\eta \in \mbb{R}$, $0 \leq \theta < 1/16$. There exist constants $\beta_1, c_2 > 0$ such that the following holds. Let $\Lambda$ be a square domain of side-length $L$, $0 < \beta < \beta_1$, and $\lambda_\Lambda$ be a collection of $(\Gamma,\eta,\theta)$-sub-Gaussian polynomials. Then there exist:
	\begin{itemize}
	\item a finite collection of ensembles $\mc F$
	\item positive coefficients $(c_{\mc N})_{\mc N \in \mc F}$ summing to 1
	\item real coefficients $(z(\beta,\rho,\mc N))_{\rho \in \mc N, \mc N \in \mc F}$
	\item functions $a_\rho: \Lambda \to \mbb R$ for each $\rho \in \mc N$, $\mc N \in \mc F$
	\end{itemize}
such that for $g: \Lambda \to \mbb{R}$,
\[
\E^{\GFF}_{\beta,\Lambda,\0}\left[\prod_{j \in \Lambda} \lambda_j(\phi_j + g_j)\right] = \sum_{\mc N \in \mc F} c_{\mc N}\cdot\E^{\GFF}_{\beta,\Lambda,\0}\left[\prod_{\rho \in \mc N}[1+z(\beta,\rho,\mc N)\cos(\langle \phi, \rho + \Delta_\Lambda a_\rho\rangle + \langle g,\rho\rangle)]\right],
\]
and the following properties are satisfied for every $\mc N \in \mc F$:
\begin{enumerate}
\item For every $\rho \in \mc N$
\[
|z(\beta,\rho,\mc N)| \leq \exp\left[-\frac{c_2}{\beta}\left(||\rho||_2^2 + \log_2(d_{\Lambda}(\rho)+1)\right)\right].
\]
\item For distinct $\rho_1,\rho_2 \in \mc N$, if $d_\Lambda(\rho_1),d_\Lambda(\rho_2) \in [2^k -1, 2^{k+1}-2]$, $k \geq 1$, then $D(\rho_1) \cap D(\rho_2) = \emptyset$.
\item For $\Lambda' \subset \Lambda$ a sub-domain such that $\dist(\Lambda',\partial\Lambda) \geq L/8$, there exists at most one $\rho_c \in \mc N$ such that $\supp(\rho_c)\cap \Lambda' \neq \emptyset$ and $Q(\rho_c) \neq 0$.
\end{enumerate}
\end{theorem}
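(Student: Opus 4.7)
The plan is to follow the multiscale Fr\"{o}hlich--Spencer renormalization, paralleling the proof of \cite[Theorem 1.6]{KP17} with the modifications needed for the zero-boundary Green's function and for the auxiliary shifts $a_\rho$ that appear once charged densities are created.

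First I would write $\lambda_j(\phi_j+g_j) = 1 + 2\sum_{q\geq 1}\hat\lambda_q \cos(q(\phi_j+g_j))$ and distribute the product over $j\in\Lambda^o$, representing the left-hand side as a sum over ensembles $\mc N_0$ of singleton charge densities $\rho=q_j\delta_j$, each carrying a coefficient proportional to $\hat\lambda_{|q_j|}$. At this stage every $a_\rho$ is zero and each factor is trivially in the form $1+z_\rho \cos(\langle\phi,\rho\rangle+\langle g,\rho\rangle)$. Next I would run a dyadic coarsening over scales $k=0,1,2,\ldots$: at scale $k$, I would locate pairs $\rho_1,\rho_2$ in the current ensemble with $d_\Lambda(\rho_i)\in[2^k-1,2^{k+1}-2]$ and $D(\rho_1)\cap D(\rho_2)\ne\emptyset$ and merge them using $\cos A\cos B = \tfrac{1}{2}(\cos(A+B)+\cos(A-B))$. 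Each merge yields a convex combination of two ensembles in which the pair is replaced by $\rho_1+\rho_2$ or $\rho_1-\rho_2$, both supported in $D(\rho_1)\cup D(\rho_2)$; once no further merges are possible, property~2 holds at scale $k$. Whenever a charged density $\rho$ is produced, I would update $a_\rho$ to be the solution of a Dirichlet Poisson problem so that $\rho+\Delta_\Lambda a_\rho$ is the effective screened density seen by the Gaussian propagator, guided by the image-charge interpretation of Claim~\ref{claim-Green's Laplacian}.

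To establish property~1 I would combine two sources of smallness: the sub-Gaussian bound on $\hat\lambda_q$ produces a factor of at most $\exp(\theta\|\rho\|_2^2/\beta)$ under the product structure, while each Gaussian expectation $\E^{\GFF}_{\beta,\Lambda,\0}[\cos(\langle\phi,\rho+\Delta_\Lambda a_\rho\rangle)]$ contributes a factor $\exp(-\tfrac{1}{2\beta}\langle \rho+\Delta_\Lambda a_\rho, G_\Lambda(\rho+\Delta_\Lambda a_\rho)\rangle)$. Using the Green's function asymptotic \eqref{eq-Green asymptotic}, this is at most $\exp(-\tfrac{c}{\beta}\|\rho\|_2^2 \log d_\Lambda(\rho))$ for charged densities and a comparable bound for neutral ones; the restriction $\theta<1/16$ is precisely what ensures the Gaussian decay dominates the sub-Gaussian growth at every scale, yielding property~1 with some $c_2>0$. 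Property~3 follows because at scales $2^k\gtrsim L/8$ any two charged densities meeting $\Lambda'$ would have had overlapping $D$-neighborhoods at an earlier scale and hence would already have been merged (either into a single charged density at a larger scale, or into a neutral residual plus a boundary-screened piece absorbed into $a_\rho$), so at most one charged $\rho_c$ with $\supp(\rho_c)\cap\Lambda'\neq\emptyset$ survives.

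The hardest part is the inductive bookkeeping of the weights $z(\beta,\rho,\mc N)$ through the $O(\log L)$ scales: each merge multiplies two weights, splits the ensemble, and introduces a combinatorial factor from the number of admissible pairings, so maintaining the sub-Gaussian profile requires a priori control of $\|\Delta_\Lambda a_\rho\|_2$ so that the effective density's $\ell^2$-norm stays comparable to $\|\rho\|_2$. This is essentially the content of the appendix of \cite{KP17}; the only substantive new ingredient is the adaptation to $G_\Lambda$ (versus the torus Green's function used there) and the handling of the boundary contribution to $d_\Lambda$, both of which follow from the symmetric simple-random-walk representation developed in Section~\ref{sec-prelim}.
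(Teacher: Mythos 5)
Your outline captures the overall architecture (split the product into cosine terms, merge iteratively across dyadic scales, use sub-Gaussianity to control the combinatorics, and get decay from the Gaussian structure at each scale), and your remark on Property~3 is essentially the same short argument the paper uses. However, there is a genuine gap in the mechanism you propose for Property~1 and for the role of $a_\rho$, and it is the mathematical heart of the theorem.

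The paper does \emph{not} obtain the decay bound on $|z(\beta,\rho,\mc N)|$ by computing the Gaussian expectation $\E^{\GFF}_{\beta,\Lambda,\0}[\cos(\langle\phi,\rho\rangle)]$ factor by factor. The statement is an exact identity of Gaussian integrals, and the factors $1+z\cos(\cdots)$ do not decouple under the expectation; trying to estimate $\E^{\GFF}[\prod(1+K(\rho)\cos)]$ by pulling out one cosine at a time runs straight into the cross-interaction between densities, which is precisely the obstruction Fr\"{o}hlich and Spencer had to circumvent. Instead, the decay factor is extracted \emph{algebraically}, before taking any expectation, via the complex translation (change-of-contour) identity
\[
\int e^{i\langle \phi, \tau\rangle}\, d\mbb P^{\GFF}_{\beta,\Lambda,\0}
= e^{-E_\beta(a,\tau)}\int e^{i\langle \phi, \tau + \beta\Delta_\Lambda a\rangle}\, d\mbb P^{\GFF}_{\beta,\Lambda,\0},
\qquad E_\beta(a,\tau) = \langle a,\tau\rangle - \tfrac{\beta}{2}\langle a,-\Delta_\Lambda a\rangle,
\]
applied with $\tau=\rho$ and with a carefully chosen \emph{spin wave} $a=a_{\rho}$. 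The coefficient then becomes $z(\beta,\rho,\mc N) = K(\rho)\,e^{-E_\beta(a_\rho,\rho)}$, and Property~1 is exactly the statement that $E_\beta(a_\rho,\rho)$ is large. Crucially, $a_\rho$ is \emph{not} the solution of a Dirichlet Poisson problem $-\Delta_\Lambda a_\rho = \rho$ (that would make $\rho+\beta\Delta_\Lambda a_\rho$ trivial and would have $a_\rho$ supported everywhere, destroying the product structure). It is a hand-built, essentially piecewise-constant function (Lemma~\ref{lm-initial spin wave} and Proposition~\ref{prop-square spin wave}) constructed for \emph{every} density in the ensemble --- not only for charged ones --- and it must be constant on a neighborhood $D^+(\rho')$ of every other $\rho'$ in $\mc N(\rho)$ so that the contour shift for $\rho$ does not disturb the cosines attached to the other densities. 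This locality constraint is exactly what the separated-square structure $\mc S_k^{\sep}(\rho)$ and the separation condition in Lemma~\ref{lm-(k-1) to k ensemble} are designed to ensure, and it is why the energy lower bound takes the form $\frac{1}{16}\|\rho\|_2^2 + D_5\sum_k|\mc S_k^{\sep}(\rho)|$ rather than a Green's function pairing. A related smaller error: when two densities are merged, the result is supported on $\supp(\rho_1)\cup\supp(\rho_2)$, not on the much larger balls $D(\rho_1)\cup D(\rho_2)$; conflating these would break the inductive bookkeeping. You should re-derive Property~1 via the spin-wave/change-of-contour mechanism and verify that the spin waves can be chosen with the required constancy and support properties for the zero-boundary field (this is where $d_\Lambda$ and the boundary contribution enter).
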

Before proceeding, we note a few immediate consequences of the theorem. First, for any boundary condition $h$ we have $\langle \tilde h , \Delta_\Lambda a_\rho\rangle = 0$, so the theorem is valid for the GFF with non-zero boundary condition (with the same family $\mc F$, coefficients $c_{\mc N}$, and functions $a_\rho$). It follows that it is valid for the symmetrized GFF as well.

\subsection{Finishing the proof of Theorem~\ref{thm-MGF lower bound}}
In this section we show how to obtain Theorem~\ref{thm-MGF lower bound} from Theorem~\ref{thm-renormalization step}. We have for $0 < \beta < \beta_1$,
\begin{align*}
Z_{\beta,\Lambda,\lambda_\Lambda,h}(g) = \sum_{\mc N \in \mc F} c_{\mc N}\cdot\E^{\GFF-\Sym}_{\beta,\Lambda,h}\left[\prod_{\rho \in \mc N}[1+z(\beta,\rho,\mc N)\cos(\langle \phi, \rho + \Delta_\Lambda a_\rho \rangle + \langle g,\rho\rangle)]\right]\,.
\end{align*}
Thus, we see that it suffices to prove that there exists $0 < \beta_2 \leq \beta_1$ such that  for every $\mc N \in \mc F$ and $\beta < \beta_2$
\[
\frac{Z_{\mc N}(\beta^{-1}\sigma)}{Z_{\mc N}(\0)}\geq \exp\left(- \frac{\epsilon}{2(1+\epsilon)\beta} \langle \sigma, f\rangle\right),
\]
where
\[
Z_{\mc N}(g) := \E^{\GFF-\Sym}_{\beta,\Lambda, h}\left[\prod_{\rho \in \mc N} [1 + z(\beta,\rho,\mc N)\cos(\langle \phi, \rho + \Delta_\Lambda a_\rho\rangle + \langle g, \rho\rangle)]\right].
\]
To do this we need the following claims
\begin{claim}[{\cite[Claim 3.1]{KP17}}]\label{claim-cos Taylor}
Let $x, y \in \mbb R$ and $0 < |z| < 1/8$. There exists an absolute constant $D_2 > 0$ such that
\[
1 + z\cos(x+y) \geq \exp\left(-\frac{z \sin x \sin y}{1 + z \cos x} - D_2|z| y^2\right)(1+z\cos x).
\]
\end{claim}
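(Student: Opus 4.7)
The plan is to take logarithms on both sides. Since $|z| < 1/8$ implies $1+z\cos x \geq 7/8 > 0$, the claim is equivalent to
\[
\log\!\left(\frac{1+z\cos(x+y)}{1+z\cos x}\right) \geq -\frac{z\sin x \sin y}{1+z\cos x} - D_2|z| y^2.
\]
Writing the left-hand ratio as $1+u$ with $u := \frac{z[\cos(x+y)-\cos x]}{1+z\cos x}$ and using the identity $\cos(x+y)-\cos x = -\sin x \sin y - \cos x(1-\cos y)$, I would split $u = A + D$, where $A := -\frac{z\sin x \sin y}{1+z\cos x}$ is exactly the main term on the right-hand side and $D := -\frac{z\cos x(1-\cos y)}{1+z\cos x}$ is a correction.

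Next, the crude estimate $|\cos(x+y)-\cos x| \leq 2$ combined with the denominator bound yields $|u| \leq 2/7 < 1/2$. On this range the elementary inequality $\log(1+u) \geq u - u^2$ holds (one checks that $v \mapsto \log(1+v) - v + v^2$ is nonnegative on $(-1/2, \infty)$ by differentiating), so $\log(1+u) \geq A + D - u^2$, and it suffices to establish $u^2 - D \leq D_2|z| y^2$ for some absolute $D_2$.

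To bound $u^2 - D$, I would use the universal estimates $|\sin y| \leq \min(|y|, 1)$ and $|1-\cos y| \leq \min(y^2/2, 2)$. The former gives $A^2 = O(|z|^2 y^2)$ uniformly. For the $D$-type terms, split into $|y| \leq 2$ (use the quadratic bounds and $y^4 \leq 4 y^2$) and $|y| > 2$ (use the saturated bounds and $1 \leq y^2/4$) to obtain $D^2, |D| \leq C|z| y^2$ in both regimes. Combining via $u^2 \leq 2A^2 + 2D^2$ and exploiting $|z|^2 \leq |z|/8$ then collapses all error terms into $O(|z| y^2)$ and furnishes the desired constant $D_2$. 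The one point requiring care is precisely this two-regime split for large $|y|$: the naive Taylor bounds alone would only give $u^2 = O(z^2 y^4)$, which cannot be absorbed into $O(|z| y^2)$, so the saturated bounds are essential when $|y|$ is large. This is the main (and only) obstacle.
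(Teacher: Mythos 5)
Your proof is correct and complete, and it is the natural elementary route (the paper itself does not prove this claim but simply cites \cite[Claim 3.1]{KP17}). The decomposition $\log\bigl(\tfrac{1+z\cos(x+y)}{1+z\cos x}\bigr)=\log(1+u)$ with $u=A+D$, the inequality $\log(1+u)\geq u-u^2$ for $|u|\leq 2/7$, and the two-regime split on $|y|$ to avoid the $y^4$ blow-up in $D^2$ all check out, yielding $u^2-D\leq u^2+|D|\leq\bigl(\tfrac{32}{49}+\tfrac{4}{7}\bigr)|z|y^2$, so any $D_2\geq \tfrac{60}{49}$ works.
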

For the second claim, we let $\mc N_0 \subset \mc N$ be the set of neutral densities in $\mc N$ and $\mc N_c := \mc N \setminus \mc N_0$ be the set of charged densities in $\mc N$. We have
\begin{claim}[{\cite[Claim 3.1]{KP17}}]\label{claim-coefficient times inner product}
Let $D > 0$. There exists $0 \leq \beta_3 \leq \beta_1$ such that for $\beta < \beta_3$ and $\mc N \in \mc F$
\[
\sum_{\rho \in \mc N_0} |z(\beta,\rho,\mc N)|\cdot \langle \sigma, \rho\rangle^2 \leq \frac{\beta}{D} \sum_{j \sim l} (\sigma_j - \sigma_l)^2.
\]
\end{claim}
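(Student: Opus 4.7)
The strategy is that property 1 of Theorem~\ref{thm-renormalization step} gives $|z(\beta,\rho,\mc N)|$ super-polynomial decay in $\|\rho\|_2^2$ together with polynomial decay in $d_\Lambda(\rho)$, and this comfortably beats a Poincar\'e-type estimate bounding $\langle \sigma,\rho\rangle^2$ by a moderate polynomial in $\|\rho\|_2$ and $d_\Lambda(\rho)$ times the Dirichlet energy of $\sigma$ restricted to $D(\rho)$. Property 2 of Theorem~\ref{thm-renormalization step} then ensures that, when we sum over $\rho$ in each dyadic scale of $d_\Lambda$, these local Dirichlet energies are summed without overlap.

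\textbf{Step 1: Poincar\'e-type bound.} Fix a neutral $\rho \in \mc N_0$ and let $j_0 \in \supp(\rho)$ be the center of $D(\rho)$. Since $Q(\rho)=0$,
\[
\langle \sigma,\rho\rangle \;=\; \sum_{j \in \supp(\rho)} \rho_j(\sigma_j - \sigma_{j_0}),
\]
so by Cauchy--Schwarz $\langle \sigma,\rho\rangle^2 \leq \|\rho\|_2^2 \sum_{j \in \supp(\rho)} (\sigma_j - \sigma_{j_0})^2$. For each $j \in \supp(\rho)$ pick a shortest path $\gamma_j$ from $j_0$ to $j$; its length is at most $d(\rho) \leq d_\Lambda(\rho)$ and every vertex of $\gamma_j$ lies within $d_\Lambda(\rho) < 2d_\Lambda(\rho)$ of $j_0$, hence $\gamma_j \subset D(\rho)$. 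Telescoping and Cauchy--Schwarz along $\gamma_j$ give $(\sigma_j-\sigma_{j_0})^2 \leq d_\Lambda(\rho) \sum_{(k,l)\in\gamma_j}(\sigma_k-\sigma_l)^2$. Summing over $j \in \supp(\rho)$ and noting that each edge of $D(\rho)$ lies in at most $|\supp(\rho)| \leq \|\rho\|_2^2$ such paths, I obtain the key bound
\[
\langle \sigma,\rho\rangle^2 \;\leq\; d_\Lambda(\rho)\,\|\rho\|_2^4 \sum_{(k,l)\in E(D(\rho))} (\sigma_k-\sigma_l)^2.
\]

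\textbf{Step 2: Dyadic decomposition and weight bound.} Group densities by the unique $k(\rho)\geq 1$ with $d_\Lambda(\rho) \in [2^{k(\rho)}-1,\,2^{k(\rho)+1}-2]$. For each fixed $k$, property 2 of Theorem~\ref{thm-renormalization step} yields disjointness of $\{D(\rho) : k(\rho)=k\}$, so
\[
\sum_{\rho \in \mc N_0 : k(\rho)=k} \sum_{(k',l')\in E(D(\rho))}(\sigma_{k'}-\sigma_{l'})^2 \;\leq\; \sum_{j \sim l}(\sigma_j-\sigma_l)^2.
\]
Using $d_\Lambda(\rho) \leq 2^{k+1}$ and the weight bound from property 1 (together with $\log_2(d_\Lambda(\rho)+1) \geq k$),
\[
|z(\beta,\rho,\mc N)|\,d_\Lambda(\rho)\|\rho\|_2^4 \;\leq\; 2^{k+1}\,\|\rho\|_2^4\,\exp\!\left[-\tfrac{c_2}{\beta}\|\rho\|_2^2\right]\,2^{-c_2 k/\beta}.
\]
For $\beta$ small enough the function $x^2 e^{-c_2 x/\beta}$ is decreasing for $x \geq 1$, so $\|\rho\|_2^4 e^{-c_2\|\rho\|_2^2/\beta} \leq e^{-c_2/\beta}$, and the bound becomes $2^{k+1}\,e^{-c_2/\beta}\,2^{-c_2 k/\beta}$, which depends only on $k$.

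\textbf{Step 3: Summing over scales.} Combining Steps 1--2 and pulling the uniform bound out of the sum over $\rho$ in each bin,
\[
\sum_{\rho \in \mc N_0} |z(\beta,\rho,\mc N)|\,\langle\sigma,\rho\rangle^2 \;\leq\; 2 e^{-c_2/\beta}\left(\sum_{k \geq 1} 2^{k(1 - c_2/(\beta \ln 2))}\right) \sum_{j\sim l}(\sigma_j-\sigma_l)^2.
\]
For $\beta < c_2/(2 \ln 2)$ the geometric series converges, and its sum is $O(2^{-c_2/(\beta \ln 2)})$; combined with the $e^{-c_2/\beta}$ prefactor, the full coefficient decays faster than any polynomial in $\beta$. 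Hence it is $\leq \beta/D$ for all $\beta < \beta_3$, for some $\beta_3 \leq \beta_1$ depending on $D$, completing the proof.

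\textbf{Main obstacle.} The delicate point is Step~1: getting a Poincar\'e-type estimate whose polynomial factors in $d_\Lambda(\rho)$ and $\|\rho\|_2$ are modest enough to be absorbed by the weight bound. Neutrality of $\rho$ is essential (it lets us subtract $\sigma_{j_0}$), and it is exactly here that restricting to $\mc N_0$ matters --- the analogous estimate would fail for charged densities, which is why they are handled separately elsewhere in the argument.
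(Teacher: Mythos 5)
Your proof is correct, and it follows essentially the same strategy one would extract from \cite{KP17} (and from the proof of Claim~\ref{claim-controlling non-neutral density} in this paper): use neutrality to get a Poincar\'e-type bound $\langle\sigma,\rho\rangle^2 \lesssim \mathrm{poly}(d_\Lambda(\rho),\|\rho\|_2)\sum_{(k,l)\in E(D(\rho))}(\sigma_k-\sigma_l)^2$, sum these within each dyadic scale of $d_\Lambda(\rho)$ using the disjointness in property~2 of Theorem~\ref{thm-renormalization step}, and let the super-exponential decay in property~1 crush both the polynomial factors and the sum over scales. Your implementation of the Poincar\'e step is a small variant: you telescope along shortest paths to the center $j_0$ of $D(\rho)$, getting $\langle\sigma,\rho\rangle^2 \le d_\Lambda(\rho)\|\rho\|_2^4\sum_{E(D(\rho))}(\sigma_k-\sigma_l)^2$, whereas the route used for the charged density (and attributed to \cite{KP17}) decomposes a neutral integer density into unit dipoles and bounds the edge multiplicities, yielding $4|D(\rho)|\|\rho\|_2^4\sum_{E(D(\rho))}(\cdot)$. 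Yours is in fact slightly sharper in $d_\Lambda$ ($d_\Lambda^1$ versus $d_\Lambda^2$), though both are far more than enough. One small slip: in the Step~2 display, $\exp\bigl[-\tfrac{c_2}{\beta}\log_2(d_\Lambda(\rho)+1)\bigr] \le \exp[-c_2 k/\beta] = 2^{-c_2 k/(\beta\ln 2)}$, not $2^{-c_2 k/\beta}$; you use the correct base-$2$ exponent in Step~3, so the argument is unaffected.
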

Additionally, we claim
\begin{claim}\label{claim-controlling non-neutral density}
Let $D > 0$. There exists $ 0 < \beta_4 \leq \beta_1$ such that for $\beta < \beta_4$ and $\mc N \in \mc F$
\[
\sum_{\rho \in \mc N_c} |z(\beta,\rho,\mc N)|\cdot \langle \sigma, \rho\rangle^2 \leq \frac{\beta}{D} \sum_{j \sim l} (\sigma_j - \sigma_l)^2.
\]
\end{claim}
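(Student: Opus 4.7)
My plan is to use property~3 of Theorem~\ref{thm-renormalization step} to reduce the sum to at most one term, then combine Cauchy-Schwarz with a discrete Poincar\'e inequality and the bound on $|z|$ from property~1. By the definition of $\Harm(\Lambda')^{\perp}$ in Section~\ref{sec-harm perp}, $\sigma$ vanishes on $\Lambda \setminus (\Lambda')^o$, so $\langle \sigma, \rho\rangle = 0$ for any $\rho$ with $\supp(\rho) \cap \Lambda' = \emptyset$. Property~3 of Theorem~\ref{thm-renormalization step} guarantees that at most one $\rho_c \in \mc N_c$ has $\supp(\rho_c) \cap \Lambda' \neq \emptyset$, so the sum in the claim reduces to at most the single term $|z(\beta, \rho_c, \mc N)|\, \langle \sigma, \rho_c\rangle^2$.

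For such a $\rho_c$, I would first establish $d_\Lambda(\rho_c) \geq L/16$. Fix $j^* \in \supp(\rho_c) \cap \Lambda'$, so $\dist(j^*, \partial\Lambda) \geq L/8$. For any $k \in \supp(\rho_c)$ the triangle inequality gives $\dist(j^*, k) \geq L/8 - \dist(k, \partial\Lambda)$; taking $k$ to realize $\dist(\rho_c, \partial\Lambda)$ yields $d(\rho_c) + \dist(\rho_c, \partial\Lambda) \geq L/8$, whence $d_\Lambda(\rho_c) = \max(d(\rho_c), \dist(\rho_c, \partial\Lambda)) \geq L/16$. Next, since $\sigma$ vanishes on $\partial\Lambda$ and the smallest Dirichlet eigenvalue of $-\Delta_\Lambda$ on a square of side $L$ is of order $L^{-2}$, the discrete Poincar\'e inequality furnishes an absolute constant $C_P > 0$ with $||\sigma||_2^2 \leq C_P L^2 \sum_{j \sim l}(\sigma_j - \sigma_l)^2$. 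Cauchy-Schwarz then yields $\langle \sigma, \rho_c\rangle^2 \leq ||\sigma||_2^2 \, ||\rho_c||_2^2 \leq C_P L^2 ||\rho_c||_2^2 \sum_{j \sim l}(\sigma_j - \sigma_l)^2$.

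It thus suffices to show $|z(\beta, \rho_c, \mc N)| \cdot L^2 \cdot ||\rho_c||_2^2 \leq \beta/(C_P D)$ for $\beta$ sufficiently small. By property~1, $|z(\beta, \rho_c, \mc N)| \leq \exp(-c_2 ||\rho_c||_2^2/\beta) \cdot (d_\Lambda(\rho_c) + 1)^{-c_2/(\beta \ln 2)}$. Write $\alpha = c_2/(\beta \ln 2)$ and $x = ||\rho_c||_2^2 \geq 1$. For $L \geq 16$ and $\beta$ small enough that $\alpha \geq 2$, the bound $d_\Lambda(\rho_c) + 1 \geq L/16$ gives $L^2 (L/16)^{-\alpha} = 16^\alpha L^{2 - \alpha} \leq 16^\alpha \cdot 16^{2 - \alpha} = 256$; for $L < 16$ the trivial bound $d_\Lambda(\rho_c) + 1 \geq 2$ gives $L^2 (d_\Lambda(\rho_c) + 1)^{-\alpha} \leq 256 \cdot 2^{-\alpha}$, which is even smaller. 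Meanwhile, for $\beta \leq c_2$ one has $x e^{-c_2 x/\beta} \leq e^{-c_2/\beta}$, since $u \mapsto u e^{-c_2 u/\beta}$ is decreasing on $[1,\infty)$ when $c_2/\beta \geq 1$. Combining, $|z(\beta, \rho_c, \mc N)| \cdot L^2 \cdot ||\rho_c||_2^2 \leq 256 \, e^{-c_2/\beta}$, and picking $\beta_4$ small enough that $256 e^{-c_2/\beta_4} \leq \beta_4/(C_P D)$ completes the proof, since the left side decays exponentially in $1/\beta$ while the right side decays only linearly.

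The main obstacle is uniformity in $L$: the Poincar\'e inequality introduces a factor of $L^2$ that would be fatal without matching decay in $L$. The decisive point is that property~3 of the renormalization theorem forces $d_\Lambda(\rho_c) \geq L/16$, so property~1 supplies a polynomial-in-$L$ factor $(L/16)^{-c_2/(\beta \ln 2)}$ in $|z|$; once $\beta$ is small enough that this exponent exceeds $2$, the decay dominates the $L^2$ from Poincar\'e, giving an estimate uniform in $L$.
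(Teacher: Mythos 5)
Your proof is correct, and it follows the same outline as the paper's — reduce to a single term $\rho_c$ via property~3, establish $d_\Lambda(\rho_c) \geq L/16$, and use the polynomial-in-$d_\Lambda$ decay from property~1 to kill a factor $L^2$ — but the middle step of bounding $\langle \sigma, \rho_c\rangle^2$ by the Dirichlet energy of $\sigma$ is done by a genuinely different method. The paper first \emph{neutralizes} $\rho_c$ by moving its total charge $-Q(\rho_c)$ to a vertex $k$ adjacent to $\Lambda'$ (this leaves $\langle\sigma,\rho_c\rangle$ unchanged because $\sigma$ vanishes outside $(\Lambda')^o$), then decomposes the resulting integer-valued neutral density into a sum of elementary dipoles $(\sigma_j - \sigma_l)$ with integer coefficients bounded by $\|\rho_c\|_2^2$, and applies Cauchy--Schwarz to obtain $\langle\sigma,\rho_c\rangle^2 \leq 4\,|D(\rho_c')|\,\|\rho_c\|_2^4 \sum_{j\sim l}(\sigma_j-\sigma_l)^2$. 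You instead apply Cauchy--Schwarz directly, $\langle\sigma,\rho_c\rangle^2 \leq \|\sigma\|_2^2\|\rho_c\|_2^2$, and control $\|\sigma\|_2^2$ by a discrete Dirichlet--Poincar\'e inequality on the $L\times L$ square. Both yield the same shape of bound, $L^2\,\|\rho_c\|_2^{2\text{ or }4}\sum_{j\sim l}(\sigma_j-\sigma_l)^2$, and both $L^2$ and the $\|\rho_c\|_2$ power are then absorbed by property~1 with $\beta$ small. Your route is arguably cleaner for this particular claim: it avoids the neutralization and the integrality-dependent dipole decomposition, and exploits only that $\sigma$ vanishes on $\partial\Lambda$. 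It would not, however, replace the paper's mechanism in the companion Claim~\ref{claim-coefficient times inner product} for neutral densities, where the densities live at scales far smaller than $L$ and a global Poincar\'e constant $\sim L^2$ would be far too lossy; the dipole decomposition's dependence on $|D(\rho)|$ rather than $L^2$ is essential there. That contrast explains why the paper handles both cases uniformly via dipoles while your argument is tailored to the non-neutral case, where there is exactly one term of macroscopic diameter.
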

The proof of this claim is given at the end of the section. First, we show how to conclude the proof of the theorem.

To simplify notation, we let $\bar\rho := \rho + \Delta_\Lambda a_\rho$. We let $\beta_5$ be small enough that $|z(\beta,\rho, \mc N)| < 1/8$ for all $\rho \in \mc N$, $\mc N \in \mc F$, and $\beta < \beta_5$. Then by Claim~\ref{claim-cos Taylor} we have for $\beta < \beta_5$
\[
\frac{Z_{\mc N}(\beta^{-1}\sigma)}{Z_{\mc N}(\0)} \geq \exp\left[-\frac{D_2}{\beta^2} \sum_{\rho \in \mc N}|z(\beta,\rho,\mc N)| \langle \sigma, \rho \rangle^2\right] \cdot \E_{\mc N}\left[e^{S(\mc N, \phi)}\right],
\]
where $S(\mc N, \phi)$ is the function
\[
S(\mc N, \phi) := - \sum_{\rho \in \mc N} \frac{z(\beta,\rho,\mc N)\sin(\langle \phi, \bar \rho\rangle) \sin \langle \beta^{-1}\sigma, \rho \rangle}{1 + z(\beta,\rho,\mc N)\cos(\langle \phi, \bar \rho\rangle)},
\]
and $\mbb P_{\mc N}$ is a probability measure given by
\[
\mbb P_{\mc N}(A) = Z_{\mc N}(\0)^{-1}\cdot \E^{\GFF-\Sym}_{\beta,\Lambda, h}\left[\one_{A}(\phi)\prod_{\rho \in \mc N} [1 + z(\beta,\rho,\mc N)\cos(\langle \phi, \bar \rho\rangle)]\right].
\]
We note that $S(\mc N, \phi) = - S(\mc N, -\phi)$ and that $\mbb P_{\mc N}$ invariant under the mapping $\phi \to -\phi$. Therefore, by Jensen's inequality $\E_{\mc N}[e^{S(\mc N, \phi)}] \geq 1$. Thus, applying Claim~\ref{claim-coefficient times inner product} and Claim~\ref{claim-controlling non-neutral density} with $D := \frac{4(1+\epsilon)}{\epsilon} D_2$ we conclude that for $\beta < \beta_2 := \beta_3 \wedge \beta_4 \wedge \beta_5$
\begin{align*}
\frac{Z_{\mc N}(\beta^{-1}\sigma)}{Z_{\mc N}(\0)}
	&\geq \exp\left[-\frac{D_2}{\beta^2} \sum_{\rho \in \mc N}|z(\beta,\rho,\mc N)| \langle \sigma, \rho \rangle^2\right] \\
	&\geq \exp\left(-\frac{\epsilon}{2(1+\epsilon)\beta}\sum_{j \sim l}(\sigma_j - \sigma_l)^2\right).
\end{align*}
Recalling that $\sum_{j \sim l} (\sigma_j - \sigma_l)^2 = \langle \sigma, -\Delta_\Lambda \sigma\rangle = \langle \sigma, f \rangle$ concludes the proof of Theorem~\ref{thm-MGF lower bound}.

\begin{proof}[Proof of Claim~\ref{claim-controlling non-neutral density}:]
By property 3 in Theorem~\ref{thm-renormalization step}, there is at most one density $\rho_c \in \mc N_c$ such that $\langle \rho_c, \sigma \rangle \neq 0$. We let $k \in \Lambda$ be a vertex such that $\dist(k, \Lambda') = 1$ and $\rho_c'$ be a charge density such that $\rho_{c,j}' = \rho_{c,j}$ for $j \in (\Lambda')^o$, $\rho'_{c,k} = -Q(\rho_c)$, and $\rho'_{c,j} = 0$ for $j \in \Lambda \setminus (\Lambda' \cup \{k\})$. Thus, $\langle \rho_c ,\sigma \rangle = \langle \rho_c',\sigma \rangle$ and $Q(\rho_c') = 0$. From this point the proof is the same as the proof of \cite[Claim 3.2]{KP17}. Since $\rho_c'$ takes integer values and is neutral, there exist integer values $(c_{\{j,l\}})_{j \sim l}$ such that
\[
\langle \rho_c', \sigma\rangle = \sum_{\substack{j,l \in D(\rho_c')\\ j \sim l}} c_{\{j,l\}}(\sigma_j - \sigma_l),
\]
and $|c_{j,l}| \leq \frac{1}{2}\sum_{j \in \Lambda} |\rho'_{c,j}| \leq \sum_{j \in \Lambda} |\rho_{c,j}| \leq ||\rho_c||_2^2$. This can be proved by induction on $||\rho_c'||_1 : = \sum_{j \in \Lambda} | \rho_{c,j}'|$ since a netural density with $||\rho'_c||_1 > 2$ can be decomposed into a sum of two netural densities $\rho_1, \rho_2$ with $||\rho_1||_1 , ||\rho_2||_1 < ||\rho_c'||_1$, and the case $||\rho'_c||_1 = 2$ is trivial. Applying the Cauchy-Schwarz inequality gives
\[
\langle \rho_c', \sigma\rangle^2 \leq 4 |D(\rho_c')|\, ||\rho_c||_2^4 \sum_{j \sim l} (\sigma_j - \sigma_l)^2.
\]
Note that as $\supp(\rho_c)\cap \Lambda' \neq \emptyset$ and $Q(\rho_c) \neq 0$, we have 
\[
d_{\Lambda}(\rho_c) \geq \frac{L}{16}.
\]
By property 1 in Theorem~\ref{thm-renormalization step} and the trivial bound $|D(\rho_c')| \leq L^2$ we conclude that there exists $0 < \beta_4 \leq \beta_1$ such that for $0 < \beta < \beta_4$
\[
|z(\beta,\rho_c,\mc N)| \langle \rho_c, \sigma\rangle^2  \leq \frac{\beta}{D} \sum_{j \sim l} (\sigma_j - \sigma_l)^2. \qedhere
\]
\end{proof}
\small

\section{Proof of Theorem~\ref{thm-max asymptotic no rate}}\label{sec-thm2 proof}
\subsection{Overview of the proof}
Let $\epsilon > 0$ be fixed, and $N > 0$ be the smallest integer such that $(\frac{3}{4})^N < \frac{\epsilon}{2}$. Let $D_1$ and $\beta_0$ be as in Proposition~\ref{prop-maximum epsilon proba}. We assume for the rest of the section that $0 < \beta < \beta_0$. Let $\delta = (3 D_1)^{-1}$ and for $k = 1,\dots N$ let 
\begin{equation}\label{eq-delta sequence}
\delta_k = 9 \left(\frac{\delta}{9}\right)^{2^{N-k}}.
\end{equation}
We define the sets $A_k$ and $\Lambda_k$ recursively as follows. Take $\Lambda_1 := \Lambda$ and for $k = 1,\dots, N$,
\[
A_k = \{j \in \Lambda_{k} \,:\, \dist(j,\partial \Lambda_k) \leq L^{\delta_k}\}, \quad \Lambda_{k+1} = \Lambda_k \setminus A_k.
\]
Note that $\Lambda_k$ is a sub-domain of $\Lambda$. Let $R_0$ be as in Proposition~\ref{prop-maximum epsilon proba} and $L_3$ be the smallest integer such that $L_3^{\delta_1} \geq R_0$. We assume from now on that $L \geq L_3$. We note that under this assumption, since $R_0 \geq 10$, $\delta < 1/10$, and $N \geq 3$, one can easily check that the side-length of $\Lambda_N$ is at least $L/2$. Next, we let we let $b_k$ be given by
\begin{align*}
b_1 &= 0, \quad b_k = b_{k-1} + 2\sqrt{\delta_{k-1}}.
\end{align*}
Note that for $1 \leq k \leq N-1$ we have $\delta_{k+1} = 3 \sqrt{\delta_k}$ and consequently
\[
\delta_{k+1} - b_{k+1} = \sqrt{\delta_k} - b_k > \delta_k - b_k.
\]
Therefore
\[
c_1 := \min_{1 \leq k \leq N} (\delta_k - b_k) = \delta_1.
\]
Finally, we introduce the events
\[
\mc E_{k,1} = \left\{\ \max_{j \in A_k} m_j \geq \frac{\delta_k-b_{k}}{\sqrt{\beta}} \log(L)\right\}, \quad \mc E_{k,2} = \left\{\min_{j \in \partial \Lambda_k} m_j \geq - \frac{b_{k}}{\sqrt{\beta}} \log(L)\right\}.
\]
Note that $\mbb P^{\IV}_{\beta,\Lambda,\0}(\mc E_{1,2}) = 1$. We claim that there exists $L_4 \geq L_3$ such that for $L \geq L_4$ the following holds for $k = 1,\dots, N$
\begin{equation}\label{eq-good event bootstrap}
\mbb P^{\IV}_{\beta,\Lambda,\0}\left(\mc E_{k,1} \mid \mc E_{k,2}\right) \geq \frac{1}{4}, \quad\text{and}\quad
\mbb P^{\IV}_{\beta,\Lambda,\0}\left(\mc E_{k+1,2}^c \mid \mc E_{k,2}\right) \leq L^{-1}.
\end{equation}
Assuming this holds, we conclude that for $L \geq L_4$ 
\[
\mbb P^{\IV}_{\beta,\Lambda,\0}\left( \max_{j \in \Lambda} m_j < \frac{c_1}{\sqrt{\beta}} \log(L) \right) \leq \left(\frac{3}{4}\right)^N + 4 L^{-1} <\epsilon,
\]
where we used the fact that $L \geq L_3$ implies that $L > \frac{8}{\epsilon}$. Thus, it remains to prove \eqref{eq-good event bootstrap}.

\subsection{Proof of \eqref{eq-good event bootstrap}}
To control the effect of conditioning on $\mc E_{k,2}$, we use the fact that the law of the IV-GFF is increasing as a function of the boundary condition. To state this fact, we introduce some notation.

We let $\mbb Z^\Lambda$ be the space of integer-valued functions on $\Lambda$ and similarly for $\mbb Z^{\partial \Lambda}$. For two functions $m_1, m_2 \in \mbb Z^{\Lambda}$, we say $m_2$ is larger than $m_1$, and write $m_1 \leq m_2$, if $m_1(j) \leq m_2(j)$ for all $j \in \Lambda$. We call a function $g: \mbb Z^\Lambda \to \mbb R$ increasing if it is increasing in each coordinate. That is, $g(m_1) \leq g(m_2)$ whenever $m_1 \leq m_2$. Finally, for two probability measures $\mu_1,\mu_2$ on $\mbb Z^\Lambda$, we say $\mu_2$ is stochastically larger than $\mu_1$, and write $\mu_1 \leq \mu_2$, if the following holds for all bounded, increasing functions $g: \mbb Z^\Lambda \to \mbb R$,
\[
\sum_{m \in \mbb Z^\Lambda} \mu_1(m) g(m) \leq \sum_{m \in \mbb Z^\Lambda} \mu_2(m) g(m).
\]
The following lemma shows that the law of the IV-GFF is increasing as a function of the boundary condition in this sense
\begin{lemma}\label{lm-bdry condition monotonicity}
Let $\Lambda$ be a square domain and $h_1,h_2 \in \mbb Z^{\partial \Lambda}$ be such that $h_1 \leq h_2$. Then $\mbb P^{\IV}_{\beta,\Lambda,h_1} \leq \mbb P^{\IV}_{\beta,\Lambda,h_2}$ for all $\beta > 0$.
\end{lemma}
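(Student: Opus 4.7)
The plan is to deduce the stochastic domination from Holley's inequality, which reduces the problem to verifying that the Hamiltonian of the IV-GFF is submodular under coordinatewise $\wedge$ and $\vee$ on $\mbb{Z}^\Lambda$. View $\mbb{P}^{\IV}_{\beta,\Lambda,h_i}$ as a probability measure on $\mbb{Z}^\Lambda$ supported on $\{g : g|_{\partial\Lambda}=h_i\}$, with density proportional to $e^{-\beta H_0(g)/2}$ where $H_0(g):=\sum_{j\sim l}(g_j-g_l)^2$. Holley's condition for $\mbb{P}^{\IV}_{\beta,\Lambda,h_1}\leq\mbb{P}^{\IV}_{\beta,\Lambda,h_2}$ reads
\[\mbb{P}^{\IV}_{\beta,\Lambda,h_1}(g^1\wedge g^2)\,\mbb{P}^{\IV}_{\beta,\Lambda,h_2}(g^1\vee g^2)\geq \mbb{P}^{\IV}_{\beta,\Lambda,h_1}(g^1)\,\mbb{P}^{\IV}_{\beta,\Lambda,h_2}(g^2)\]
for all $g^1,g^2\in\mbb{Z}^\Lambda$. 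The assumption $h_1\leq h_2$ ensures $(g^1\wedge g^2)|_{\partial\Lambda}=h_1$ and $(g^1\vee g^2)|_{\partial\Lambda}=h_2$ whenever $g^i|_{\partial\Lambda}=h_i$, so both sides are supported appropriately and the normalization constants cancel. The condition then reduces to the submodularity inequality
\[H_0(g^1\wedge g^2)+H_0(g^1\vee g^2)\leq H_0(g^1)+H_0(g^2).\]

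I plan to verify this edge by edge: for any $a,b,c,d\in\mbb{Z}$,
\[(a\wedge b-c\wedge d)^2+(a\vee b-c\vee d)^2\leq(a-c)^2+(b-d)^2,\]
with equality when $a,b$ and $c,d$ are ordered in the same direction. In the only nontrivial case, say $a\geq b$ and $c\leq d$, direct expansion gives
\[(a-c)^2+(b-d)^2-(b-c)^2-(a-d)^2=2(a-b)(d-c)\geq 0.\]
Summing over all edges of $\Lambda$ yields the submodularity, Holley's condition holds, and the lemma follows.

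The only mildly technical issue is that Holley's theorem is classically stated for finite distributive lattices, whereas $\mbb{Z}^{\Lambda^o}$ is countable. This can be handled routinely by truncating to $\{-K,\dots,K\}^{\Lambda^o}$ (submodularity of $H_0$ is preserved under the truncated conditional measures), applying the finite-lattice Holley inequality, and sending $K\to\infty$ using that the truncated measures converge in total variation to the IV-GFF (thanks to the Gaussian tails at $\beta>0$) and that stochastic domination passes to this limit. An essentially equivalent alternative, if one prefers to avoid the truncation, is to run coupled heat-bath Glauber dynamics on the two fields and use that the single-site conditional law $p(k)\propto \exp(-2\beta(k-\bar m_j)^2)$ has monotone likelihood ratio in $\bar m_j:=\tfrac14\sum_{l\sim j}m_l$, so that the quantile coupling preserves $m^{(1)}\leq m^{(2)}$ at every update. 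Either way, the real content of the proof is the one-line edge inequality above, and I expect no substantive obstacle elsewhere.
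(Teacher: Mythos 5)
Your proposal is correct and follows essentially the same route as the paper: the paper also verifies the Holley criterion for $\mu_1$ and $\mu_2$ on $\mbb{Z}^{\Lambda^o}$ (calling it a ``simple calculation''), derives single-site conditional monotonicity, and concludes via coupled Glauber dynamics, citing \cite[Theorem 4.8]{GHM99} — which is precisely the second alternative you sketch. Your edge-by-edge submodularity inequality $(a\wedge b - c\wedge d)^2 + (a\vee b - c\vee d)^2 \leq (a-c)^2 + (b-d)^2$ is exactly the calculation the paper elides, so you have in fact supplied the detail the paper leaves to the reader.
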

\begin{proof}[Proof of Lemma~\ref{lm-bdry condition monotonicity}:]
We provide a sketch of the proof here. To simplify notation, we let $\mu_i = \mbb P^{\IV-\GFF}_{\beta,\Lambda,h_i}$ for $i = 1,2$. We consider $\mu_1$ and $\mu_2$ as probability measures on $Z^{\Lambda^o}$. For $m,m' \in \mbb Z^{\Lambda^o}$, we let $m \wedge m' \in \mbb Z^{\Lambda^o}$ be given by $(m \wedge m')_j = \min(m_j,m'_j)$, and similarly $(m \vee m')_j = \max(m_j,m'_j)$. It follows from a simple calculation that $\mu_1$ and $\mu_2$ satisfy the Holley criterion. That is, for $m,m' \in \mbb Z^{\Lambda^o}$, we have
\[
\mu_1(m \wedge m')\mu_2(m \vee m') \geq \mu_1(m) \mu_2(m'). 
\]
From this it follows easily that the following holds. For $a \in \mbb Z$ and $v \in \Lambda^o$, let $I_{a,v} \subset \mbb Z^{\Lambda^o}$ be the subset of functions $m \in \mbb Z^{\Lambda^o}$ such that $m_v \geq a$. For $m \in \mbb Z^{\Lambda^o}$, let $J_{v,m}$ be the subset of functions $m' \in \mbb Z^{\Lambda^o}$ such that $m'_j = m_j$ for $j \neq v$. We have that for any $m_1,m_2 \in \mbb Z^{\Lambda^o}$ such that $m_1 \leq m_2$,
\[
\mu_1(I_{a,v} \mid J_{v, m_1}) \leq \mu_2(I_{a,v} \mid J_{v, m_2}). 
\]
That is, $\mu_1(\cdot \mid J_{v,m_1}) \leq \mu_2(\cdot \mid J_{v,m_2})$. From this one can show, by considering two coupled Markov chains with invariant distribution $\mu_1$ and $\mu_2$, that $\mu_1 \leq \mu_2$. See, e.g. \cite[Theorem 4.8]{GHM99}, for further details of the proof. 
\end{proof}
By noting that the minimum and maximum functions are increasing, we obtain the following corollary.
\begin{corollary}\label{cor-monotonicity needed}
Let $\Lambda$ be a square domain, $\beta, x > 0$ and $A \subset \Lambda$. Then
\[
\mbb{P}^{\IV}_{\beta,\Lambda,h}\left(\max_{j \in A} m_j \geq x\right),
\]
and
\[
\mbb{P}^{\IV}_{\beta,\Lambda,h}\left(\min_{j \in A} m_j \geq -x\right),
\]
are increasing as functions of the boundary condition $h : \partial \Lambda \to \mbb Z$.
\end{corollary}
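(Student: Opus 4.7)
The plan is to deduce the corollary directly from Lemma~\ref{lm-bdry condition monotonicity} by checking that each of the two probabilities can be written as the expectation of a bounded increasing function of $m$. Once that is established, stochastic monotonicity of the law $\mbb P^{\IV}_{\beta,\Lambda,h}$ in $h$ (as provided by the lemma) immediately implies monotonicity of the two probabilities in $h$.

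For the first claim, I would let $g_1(m) := \one_{\{\max_{j \in A} m_j \geq x\}}$ and observe that $g_1$ is bounded and increasing on $\mbb Z^{\Lambda}$: indeed, if $m \leq m'$ pointwise on $\Lambda$, then for every $j \in A$ we have $m_j \leq m'_j$, so $\max_{j \in A} m_j \leq \max_{j \in A} m'_j$, and thus $g_1(m) \leq g_1(m')$. Applying Lemma~\ref{lm-bdry condition monotonicity} with two boundary conditions $h_1 \leq h_2$ to the function $g_1$ yields
\[
\mbb P^{\IV}_{\beta,\Lambda,h_1}\!\left(\max_{j\in A} m_j \geq x\right) = \E^{\IV}_{\beta,\Lambda,h_1}[g_1(m)] \leq \E^{\IV}_{\beta,\Lambda,h_2}[g_1(m)] = \mbb P^{\IV}_{\beta,\Lambda,h_2}\!\left(\max_{j\in A} m_j \geq x\right).
\]

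For the second claim, I would similarly let $g_2(m) := \one_{\{\min_{j \in A} m_j \geq -x\}}$ and note that $g_2$ is bounded and increasing, since $m \leq m'$ implies $\min_{j \in A} m_j \leq \min_{j \in A} m'_j$, and hence $g_2(m) \leq g_2(m')$. The same application of Lemma~\ref{lm-bdry condition monotonicity} then gives the desired monotonicity. There is no substantive obstacle here; the entire content is the observation that both events are upward-closed in the coordinatewise order on $\mbb Z^{\Lambda}$, which is precisely what the hint preceding the corollary is flagging.
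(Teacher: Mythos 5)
Your proof is correct and follows exactly the route the paper intends: the paper simply observes that the minimum and maximum functions are increasing and invokes Lemma~\ref{lm-bdry condition monotonicity}, which is precisely what you do (with the welcome extra step of spelling out that the corresponding indicator functions are bounded and increasing).
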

Provided with this fact, the proof proceeds as follows. Let $\mathbf 1_k : \partial \Lambda_k \to \mbb R$ be the function identically equal to 1 and $\nu = \beta^{-1/2}\log(L)$. We have by Lemma~\ref{lm-domain Markov property} and Corollary~\ref{cor-monotonicity needed} 
\[
\mbb P^{\IV}_{\beta,\Lambda,\0}\left(\mc E_{k,1} \mid \mc E_{k,2}\right) \geq
	\mbb P^{\IV}_{\beta,\Lambda_k,-b_k \nu\mathbf 1_k}(\mc E_{k,1})
\]
and
\[
\mbb P^{\IV}_{\beta,\Lambda,\0}\left(\mc E_{k+1,2}^c \mid \mc E_{k,2}\right) \leq
	\mbb P^{\IV}_{\beta,\Lambda_k,-b_k \nu\mathbf 1_k}(\mc E^c_{k+1,2})\,.
\]
Note that if $m$ has law $\mbb P^{\IV}_{\beta,\Lambda_k,-b_k \nu\mathbf 1_k}$ then $m'$ defined by $m'_j = m_j + b_k\nu$ has law $\mbb P^{\IV}_{\beta,\Lambda_k,\0}$. Thus, it suffices to show that
\begin{align}
\mbb P^{\IV}_{\beta,\Lambda_k,\0}\left(\max_{j \in A_k} m_j \geq \delta_k \nu\right) &\geq \frac{1}{4} \label{eq-maximum positive proba}\\
\mbb P^{\IV}_{\beta,\Lambda_k,\0}\left(\min_{j \in \partial \Lambda_{k+1}} m_j < - 2\sqrt{\delta_{k}} \nu\right) &\leq L^{-1} \label{eq-minimum negligible}.
\end{align}
We begin with \eqref{eq-maximum positive proba}. Let $Q_k$ be a collection of disjoint sub-domains of $\Lambda_k$ of side-length $R = \lceil L^{\delta_k}\rceil$ which are contained in $A_k$ satisfying $|Q_k| \geq L^{1-\delta_k}$. That such a collection exists follows from the fact that the side-length of $\Lambda_k$ is at least $L/2$ and $L^{\delta_k} \geq 10$. For each $\Pi \in Q_k$, let
\[
U_\Pi := \mbb P^{\IV}_{\beta,\Lambda_k,\0} \left(\max_{j \in \Pi} |m_j| \geq \delta_k \nu \mid m_j, \, j \in \partial \Pi\right).
\]
Recall that by assumption $\delta_k \leq \delta = (3D_1)^{-1}$, and that $D_1 \geq 100$. Therefore, if we let
\[
w_k = \sqrt{\frac{2D_1}{\delta_k}},
\]
we have
\[
1 - (1 + w_k)\delta_k \geq 1 - \sqrt{\frac{2}{3}} - \sqrt{\delta} > \frac{1}{10}, \quad \text{ and } \quad 1 - \frac{w_k^2}{D_1}\delta_k = -1.
\]
Noting that $|Q_k| \leq L$ and applying Proposition~\ref{prop-maximum epsilon proba} with $w = w_k$ to each element of $Q_k$ we see that the event 
\[
\mc E_{k,3} = \left\{\min_{\Pi \in Q_k} U_\Pi \geq R^{-w_k}\right\}
\]
satisfies
\[
\mbb P^{\IV}_{\beta,\Lambda_k,\0} (\mc E_{k,3}^c) \leq |Q_k| L^{-\frac{w_k^2}{D_1} \delta_k} \leq L^{-1},
\]
and
\[
\mbb P^{\IV}_{\beta,\Lambda_k,\0} \left(\max_{j \in A_k} |m_j| < \delta_k \nu \mid \mc E_{k,3}\right) \leq \left(1 - (L^{\delta_k} + 1)^{-w_k}\right)^{|Q_k|} \leq \exp\left(-L^{1/20}\right),
\]
where we used the fact that $(L^{\delta_k} + 1)^{w_k} \leq L^{\delta_k w_k + 1/20}$. Therefore, we have for $L \geq L_3$ (where $L_3$ is as above),
\[
\mbb P^{\IV}_{\beta,\Lambda_k,\0} \left(\max_{j \in A_k} |m_j|  \geq \delta_k \nu\right) \geq \frac{1}{2}.
\]
Thus, \eqref{eq-maximum positive proba} follows by symmetry. Next, we turn to \eqref{eq-minimum negligible}. We begin by noting that $|\partial \Lambda_{k+1}| \leq 4 L$ for all $k$, and that by \eqref{eq-Green asymptotic} there exists $L_4 \geq 0$ such that for $L \geq L_4$, 
\[
\sup_{j \in \partial \Lambda_{k+1}} G_{\Lambda_k}(j,j) \leq \frac{\delta_k}{2}\log(L).
\]
Therefore, we have by Proposition~\ref{prop-IV MGF upper bound} and a Chernoff bound that
\begin{align*}
\mbb P^{\IV}_{\beta,\Lambda'_k,\0} \left(\min_{j \in \partial \Lambda_k} m_j < -2 \sqrt{\delta_k} \nu\right) &\leq 4 L \sup_{j \in \partial \Lambda_k} \mbb P^{\IV}_{\beta,\Lambda'_k,\0} \left(m_j < -2 \sqrt{\delta_k} \nu\right)\\
&\leq 4L \min_{s > 0} \exp\left(\frac{s^2\delta_k}{4\beta} \log(L) - 2s\sqrt{\frac{\delta_k}{\beta}} \log(L)\right)\\
&\leq 4L^{-3} \leq L^{-1}.
\end{align*}

\subsection{Proof of Theorem~\ref{thm-max asymptotic no rate} for free and periodic boundary conditions}
We will outline the changes necessary to the argument given above to establish Theorem~\ref{thm-max asymptotic no rate} for the field with free or periodic boundary conditions. The main input we require are bounds on the Green's function in each case. This is given by the following proposition
\begin{proposition}
	There exist constants $c_4, c_5 > 0$ such that the following holds. Let $\Lambda$ be a square domain with free or periodic boundary. Let $A \subset \Lambda$ be non-empty. $S$ be a random walk on $\Lambda$, $\zeta = \min\{t \geq 0, \:\, S_t \in A\}$, and 
	\[
	G_{\Lambda\setminus A}(j,l) := \E\left[\int_0^{\zeta} \one_{\{l\}}(S_t)dt\right]
	\]
	be the Green's function on $\Lambda \setminus A$. Let $j \in \Lambda \setminus A$. Then
	\[
	c_4\log(\dist(v,A) +1) \leq G_{\Lambda \setminus A}(j,j) \leq c_5 \log(\dist(v,A) + 1).
	\]
\end{proposition}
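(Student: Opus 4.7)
The plan is to identify $G_{\Lambda\setminus A}(j,j)$ with an effective resistance and then invoke the classical two-dimensional resistance estimates. First I would set $r := \dist(j, A)$ (I assume the ``$v$'' in the statement is a typo for ``$j$'') and verify the identity
\[
G_{\Lambda \setminus A}(j, j) = R_{\mathrm{eff}}^{\Lambda}(j \leftrightarrow A),
\]
where the right side is the effective resistance between $j$ and $A$ in the network $\Lambda$ with unit conductance per edge (free or periodic, as appropriate). This follows from the continuous-time description: the holding time at $j$ is exponential with mean $1/\deg(j)$, the number of visits to $j$ before reaching $A$ is geometric with parameter equal to the escape probability $1/(\deg(j)\,R_{\mathrm{eff}}^{\Lambda}(j \leftrightarrow A))$, and the product equals $R_{\mathrm{eff}}^{\Lambda}(j \leftrightarrow A)$. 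The problem thus reduces to bounding the effective resistance by $\Theta(\log(r+1))$.

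For the lower bound I would apply Dirichlet's principle, which states that the effective conductance $C_{\mathrm{eff}} = 1/R_{\mathrm{eff}}$ equals the infimum of $D_\Lambda(\phi) := \sum_{\{u,w\}\in E(\Lambda)}(\phi(u)-\phi(w))^2$ over $\phi$ with $\phi(j)=1$ and $\phi|_A = 0$. The explicit test function
\[
\phi(v) = \max\left(0,\, 1 - \frac{\log(\dist(v, j) + 1)}{\log(r + 1)}\right)
\]
satisfies $\phi(j)=1$ and vanishes on $\{v \in \Lambda : \dist(v,j) \geq r\}$, which contains $A$. Decomposing the edge sum by the distance $s$ of the closer endpoint to $j$: each annulus contains $O(s)$ edges whose squared differences are $O((s\log(r+1))^{-2})$, so the contribution at scale $s$ is $O((s\log^2(r+1))^{-1})$ and summing over $1 \leq s \leq r$ yields $D_\Lambda(\phi) = O(1/\log(r+1))$. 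Hence $R_{\mathrm{eff}}^{\Lambda}(j \leftrightarrow A) \geq c_4 \log(r+1)$.

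For the upper bound I would apply Thomson's principle, which states that $R_{\mathrm{eff}}^\Lambda(j \leftrightarrow A)$ equals the infimum of the flow energy $\sum_e \theta(e)^2$ over unit flows $\theta$ from $j$ to $A$. Shorting $A$ to a single point only decreases resistance, so it suffices to exhibit a low-energy flow from $j$ to the nearest point $v^* \in A$, with $\dist(j,v^*) = r$. I would construct the standard radial ``spread-and-recoalesce'' flow: at each radial scale $s \in \{1,\dots,\lfloor r/2\rfloor\}$, the flow divides approximately uniformly across the $\Theta(s)$ edges of $\Lambda$ crossing the sphere $\partial B(j, s)$, so each such edge carries $O(1/s)$ flow and the annulus energy is $\Theta(s)\cdot O(1/s^2) = O(1/s)$; symmetrically, the flow recoalesces near $v^*$. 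Summing yields total energy $O(\log(r+1))$, whence $R_{\mathrm{eff}}^{\Lambda}(j \leftrightarrow A) \leq c_5 \log(r+1)$.

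The main obstacle is making the flow construction rigorous when the sphere $\partial B(j,s)$ does not fit inside $\Lambda$, which arises in the free-boundary case when $j$ lies close to $\partial \Lambda$. A clean workaround is to reflect $\Lambda$ across its sides to embed it into a $2L \times 2L$ square (or the torus of side $2L$) in which $j$ is far from every side; since reflection only adds edges and identifies vertices, it weakly decreases $R_{\mathrm{eff}}$, so an upper bound in the reflected graph transfers back to $\Lambda$. Alternatively one verifies directly that even for $j$ at a corner, $\partial B(j,s) \cap \Lambda$ still contains $\Theta(s)$ edges, preserving the $O(1/s)$ annulus energy. The regime $r = O(1)$ is handled separately by noting that both $G_{\Lambda\setminus A}(j,j)$ and $\log(r+1)$ are bounded between explicit positive constants, so the desired bounds hold after adjusting $c_4$ and $c_5$.
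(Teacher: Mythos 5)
Your proof is correct in substance, and it takes a genuinely different route from the paper's. The paper dispatches the claim in one line by invoking monotonicity of $G_{\Lambda\setminus A}(j,j)$ in the killing set $A$: shrinking $A$ to the nearest point $\{v^*\}$ gives the upper-bound extreme case, enlarging $A$ to the full exterior $\{v : \dist(v,j)\ge r\}$ gives the lower-bound extreme case, and the $\Theta(\log r)$ asymptotics for these two canonical cases are then simply cited as well known. You instead give a self-contained electrical-network argument: the resistance identity $G_{\Lambda\setminus A}(j,j)=R_{\mathrm{eff}}(j\leftrightarrow A)$, Dirichlet's principle with a logarithmic test function for the lower bound, and Thomson's principle with an annular flow for the upper bound. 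The paper's route is shorter but defers all the work to references; yours is longer but actually proves the estimates from scratch. Both are legitimate. Note that your Dirichlet argument in fact handles arbitrary $A$ directly, without needing the monotonicity step the paper uses for the lower bound, so your proof is slightly more direct there; for the upper bound you still implicitly use the same reduction (a unit flow into $v^*\in A$ is a unit flow into $A$, which is exactly the reduction $A\to\{v^*\}$), so the two approaches are closer than they first appear.

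Two small caveats. First, the remark ``shorting $A$ to a single point only decreases resistance, so it suffices to exhibit a flow to $v^*$'' is a non sequitur: decreasing $R_{\mathrm{eff}}$ would not help you prove an upper bound. The correct (and simpler) justification is the one just above: any unit flow from $j$ to $v^*$ is a unit flow from $j$ to $A$, hence Thomson gives $R_{\mathrm{eff}}(j\leftrightarrow A)\le E(\theta)$. Second, the reflection heuristic is also stated with the monotonicity backwards: reflecting $\Lambda$ into a $2L\times 2L$ torus is not an operation of ``adding edges and identifying vertices'' applied to $\Lambda$, and in any case an operation that \emph{decreases} resistance would not transfer an upper bound back to $\Lambda$. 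The honest justification is either the covering-map argument (push a flow from the torus forward under the folding map $\pi$, bounding the energy by Cauchy--Schwarz since fibers have size at most $4$) or, more simply, the direct verification you offer as an alternative, namely that $\partial B(j,s)\cap\Lambda$ contains at least a quarter-annulus and hence $\Theta(s)$ edges even when $j$ is a corner. Since you do give that alternative, the proof stands; I would just delete the reflection sentence rather than try to fix it.
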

These bounds are well-known and are proved by considering the extreme cases of $A = \{v\}$ (for the upper bound) and $A = \{v \in \Lambda \,:\, \dist(v,j) \geq k\}$ for some $k$ (for the lower bound).

Provided with these bounds, the proof proceeds as follows. First, we note that for any two vertices $v,v' \in \Lambda$, if $m$ is an IV-GFF pinned at $v$ (i.e. such that $m_v = 0$) then $m'$ defined by $m'_j = m_j - m_{v'}$ is an IV-GFF (at the same inverse temperature) pinned at $v'$. Further, we have by an application of Proposition~\ref{prop-IV MGF upper bound} and Markov's inequality that for any constant $c > 0$,
\[
\mbb{P}^{\IV}_{\beta,\Lambda,v} (m_{v'} \geq c \log(L)) \leq L^{-\frac{c^2\beta}{c_5}}.
\]
Therefore, we may assume without loss of generality that the field is pinned at the center of $\Lambda$. That is, at the vertex $v^* = (\lfloor \frac{L}{2}\rfloor, \lfloor \frac{L}{2}\rfloor)$.

Next, let $N$ be such that $(\frac{3}{4})^N < \frac{\epsilon}{2}$, and $\{\delta_k\}_{k=1}^N$ be the sequence satisfying $\delta_N = \frac{1}{2}$ and
\[
\delta_{k+1} = 3 D_1 \sqrt{c_5 \delta_{k}} + \delta_k, \quad k = 1,\dots, N-1.
\]
Let $\{b_k\}_{k = 1}^N$ be the sequence satisfying $b_1 = 0$ and
\[
b_{k+1} = b_{k} + 3\sqrt{c_5 \delta_{k}}.
\]
Note that with these choices $\delta_k/D_1 - b_k$ is increasing. Let $\Lambda_0 = \{v^*\}$ and for $k = 1,\dots,N$, let $\Lambda_k$ be a box of side-length $\lceil L^{\delta_k}\rceil$ centered at $v^*$, $A_k = \Lambda_k \setminus \Lambda_{k-1}$, and $B_k = \partial \Lambda_k$. Let $\gamma = (D_1)^{-1}$ and assume that $L^{\gamma\delta_1} \geq R_0$. Arguing as in Section~\ref{sec-thm1 sketch} one can show that
\[
\mbb{P}^{\IV}_{\beta,\Lambda,v^*}\left(\max_{j \in A_k} |m_j| \geq \frac{\gamma\delta_k}{\sqrt{\beta}}\log(L) \mid m_j = 0,\,\, \forall j \in \partial B_{k-1}\right) \geq \frac{1}{2}.
\]
Further, we can use the upper bound on the Green's function and a union bound (noting $|B_k| \leq 4 L^{\delta_k}$) to obtain
\[
\mbb{P}^{\IV}_{\beta,\Lambda,v^*}\left(\min_{j \in B_k} m_j < - \frac{3\sqrt{c_5 \delta_k}}{\sqrt{\beta}}\log(L) \mid m_j = 0,\,\, \forall j \in \partial B_{k-1}\right) \leq L^{-1}.
\]
Therefore, if we let
\[
\mc E_{k,1} = \left\{ \max_{j \in A_k} m_j \geq \frac{\gamma\delta_k -  b_k}{\sqrt{\beta}} \log(L)\right\}, \quad \mc E_{k,2} \left\{\min_{j \in B_k} \geq -\frac{b_k}{\sqrt{\beta}}\log(L)\right\}\,,
\]
we have as in the zero-boundary case
\[
\mbb{P}^{\IV}_{\beta,\Lambda,v^*}(\mc E_{k-1} \mid \mc E_{k-2}) \geq \frac{1}{4}, \quad \text{and} \quad \mbb P^{\IV}_{\beta,\Lambda,v^*}(\mc E_{k+1,2}^c \mid \mc E_{k,2}) \leq L^{-1}.
\]
This concludes the proof.

\appendix
\section{Appendix: Proof of Theorem~\ref{thm-renormalization step}}\label{sec-renormalization sketch}
In this section we provide the details involved in proving the renormalization result. As mentioned, this proof was outlined in \cite[Appendix D]{FrohlichSpencer81}. We fix $\beta$, $\Lambda$, $\Lambda'$, and $\lambda_\Lambda$ such that they satisfy the conditions of Theorem~\ref{thm-renormalization step}. We fix also
\[
\frac{3}{2} < \alpha < 2, \quad M = 2^{16}.
\]
\subsection{Square-Covering of Densities}
Let $k \geq 0$ be an integer. We call $s \subseteq \Lambda$ a $2^k \times 2^k$ square if $|s| = \min(|\Lambda|, 2^{2k})$ and there exists a vertex $(a,b) \in \Lambda$ such that
\[
s = \{(c,d) \in \Lambda \,:\, (c-a,d-b) \in \{0,1,\dots,2^k-1\}^2\}.
\]
For an integer $k \geq 0$ we let $\mathcal S_k(\rho)$ be a minimal collection of $2^k \times 2^k$ squares covering the support of $\rho$. Here minimal means of smallest cardinality. The choice of $\mathcal S_k(\rho)$ when more than one minimal cover exists is made in the same way as in \cite[Section 4.3.4]{KP17}. We then define $n(\rho) := \lceil \log_2(M \cdot d_\Lambda(\rho)^\alpha)\rceil$ and
\[
A(\rho) := \sum_{k = 0}^{n(\rho)}|\mathcal S_k(\rho)|.
\]
Next, we define $\mathcal S_k^{\sep}(\rho)$, $k\geq 1$ as follows. If $|\mathcal S_k(\rho)| > 1$, we let
\[
\mathcal S_k^{\sep}(\rho) := \{s \in \mathcal S_k(\rho)\,:\, \dist(s,s') \geq 2M2^{\alpha(k+1)}\,\, \forall s' \in \mathcal S_k(\rho) \setminus \{s\}\}.
\]
If $|\mathcal S_k(\rho)| = 1$ and $Q(\rho) = 0$, then we let $S_k^{\sep}(\rho) = \emptyset$. If $|\mathcal S(\rho)| = 1$ and $Q(\rho) \neq 0$, we let $\mathcal S_k^{\sep}(\rho) = \mathcal S_k(\rho)$ if $\dist(\rho, \partial \Lambda) \geq 2^{k+1}$ and $\mathcal S_k^{\sep}(\rho) = \emptyset$ otherwise. The following proposition will allow us to control the size of $A(\rho)$. The statement is slightly stronger than \cite[Proposition 2.1]{KP17} but it follows easily from that result.
\begin{proposition}[{\cite[Proposition 2.1]{KP17}}]
There exists a positive absolute constant $D_3$ such that for any density $\rho$,
\[
\log_2(d_{\Lambda}(\rho)+1) \leq A(\rho) \leq D_3\cdot \left( |\mathcal S_0(\rho)| + \sum_{k = 1}^{n(\rho)} |\mathcal S_k^{\sep}(\rho)| \right).
\]
\end{proposition}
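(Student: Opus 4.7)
The lower bound is immediate. Since every minimal cover satisfies $|\mathcal S_k(\rho)| \geq 1$ for each $0 \leq k \leq n(\rho)$, I have $A(\rho) \geq n(\rho) + 1$. By the definition $n(\rho) = \lceil \log_2(M \cdot d_\Lambda(\rho)^\alpha)\rceil$ and the choices $\alpha > 3/2$ and $M = 2^{16}$,
\[
A(\rho) \geq 1 + \alpha \log_2(d_\Lambda(\rho)) + \log_2 M \geq \log_2(d_\Lambda(\rho) + 1),
\]
since $d_\Lambda(\rho) \geq 1$ and the extra $17$ absorbs the $\log_2(1 + d_\Lambda(\rho)^{-1})$ correction.

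For the upper bound, I would split $\mathcal S_k(\rho) = \mathcal S_k^{\sep}(\rho) \sqcup \mathcal S_k^{\mathrm{ns}}(\rho)$ and reduce to proving
\[
\sum_{k=1}^{n(\rho)} |\mathcal S_k^{\mathrm{ns}}(\rho)| \;\leq\; D_3' \Bigl(|\mathcal S_0(\rho)| + \sum_{k=1}^{n(\rho)} |\mathcal S_k^{\sep}(\rho)|\Bigr),
\]
together with the contribution from $|\mathcal S_0(\rho)|$ itself (which is absorbed) and the term $|\mathcal S_{n(\rho)}^{\sep}(\rho)|$ in the charged case (used to terminate the recursion). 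The key geometric input is a clustering lemma: define the \emph{proximity graph} $G_k$ on $\mathcal S_k(\rho)$ by connecting $s, s'$ whenever $\dist(s, s') < 2M \cdot 2^{\alpha(k+1)}$. By definition, $\mathcal S_k^{\mathrm{ns}}(\rho)$ is exactly the set of non-isolated vertices of $G_k$ (modulo the boundary adjustment when $|\mathcal S_k(\rho)| = 1$ and $Q(\rho) \neq 0$, handled separately using $d_\Lambda(\rho)$). Any connected component $C$ with $|C| = m$ squares has its union contained in a set of diameter at most $(m-1)\cdot 2M \cdot 2^{\alpha(k+1)} + m \cdot 2^k \leq 4mM \cdot 2^{\alpha(k+1)}$, so the entire component is covered by a single square at any scale $k'$ with $2^{k'} \geq 4mM \cdot 2^{\alpha(k+1)}$, i.e.\ for $k' \geq \alpha(k+1) + \log_2 m + O(1)$.

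The main work is to convert this clustering phenomenon into a charging scheme. Each non-separated square $s \in \mathcal S_k^{\mathrm{ns}}(\rho)$ belongs to a cluster $C$ in $G_k$; I charge $s$ either (a) to a scale-$0$ square in $\mathcal S_0(\rho) \cap s$ when $C$ is small enough that the cluster is supported on few points, or (b) to a separated square at some higher scale $k^\star \leq n(\rho)$ that covers $C$ (using minimality of $\mathcal S_{k^\star}(\rho)$ and the geometric lemma above). The separated scale-$k^\star$ square exists and is unique up to controlled multiplicity because passing to a scale exceeding $\alpha(k+1) + \log_2 m + O(1)$ forces either the cluster to become a single square (then inspected for separation at that new scale) or to be absorbed into a still larger separated cover.

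\textbf{Main obstacle.} The central difficulty is bounding the multiplicity of this charging by an absolute constant $D_3$ independent of $\rho$ and $\Lambda$. Each scale-$0$ vertex can lie in many scale-$k$ squares as $k$ varies, and each separated scale-$k^\star$ square can be the ``ancestor'' of many clusters from lower scales; controlling both simultaneously is delicate. The room comes from the parameter choices $3/2 < \alpha < 2$ and $M = 2^{16}$: the gap $\alpha(k+1) - k$ grows linearly in $k$, so a cluster at scale $k$ lives across at most $O((\alpha-1)k + \log_2 m)$ scales before being merged, while the separation threshold $2M \cdot 2^{\alpha(k+1)}$ is large enough that isolated clusters remain so under minor enlargements. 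The remaining bookkeeping is a geometric packing argument: in the plane, the number of pairwise well-separated scale-$k$ squares that can coexist within a scale-$k^\star$ region is bounded by an absolute constant, giving the uniform bound on charging multiplicity. The neutral versus charged dichotomy and the role of $\dist(\rho,\partial \Lambda)$ in $d_\Lambda(\rho)$ are handled by observing that for a charged single-square cluster, the boundary-adjusted definition of $\mathcal S_k^{\sep}(\rho)$ ensures it still counts toward the RHS at every sufficiently small scale, closing the induction.
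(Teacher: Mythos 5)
Your lower bound argument is correct and coincides with the paper's: $A(\rho) \geq n(\rho) + 1$ and $n(\rho)$ is comfortably larger than $\log_2(d_\Lambda(\rho)+1)$ thanks to the choice $M = 2^{16}$.

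For the upper bound, however, your approach has a genuine gap and also misses the point of how the paper proves it. The paper does \emph{not} re-derive the combinatorial geometry; it reduces the statement to the already-established \cite[Proposition~2.1]{KP17}, which gives
\[
A'(\rho) := \sum_{k=0}^{n'(\rho)} |\mathcal S_k(\rho)| \;\leq\; D\Bigl(|\mathcal S_0(\rho)| + \sum_{k=1}^{n'(\rho)} |\mathcal S_k^{\sep}(\rho)|\Bigr),
\]
where $n'(\rho) = \lceil \log_2(M\, d(\rho)^\alpha)\rceil$ uses $d(\rho)$ rather than $d_\Lambda(\rho)$. The key observation the paper then makes is that the extra scales $n'(\rho) < k \leq n(\rho)$ satisfy $|\mathcal S_k(\rho)| = 1$ (because $2^k$ exceeds the actual diameter of $\supp(\rho)$), and moreover these single squares are counted in $\mathcal S_k^{\sep}(\rho)$ whenever $k \leq \lfloor \log_2 d_\Lambda(\rho) - 1\rfloor$, because $d_\Lambda(\rho) > d(\rho)$ forces $Q(\rho)\neq 0$ and $d_\Lambda(\rho) = \dist(\rho,\partial\Lambda)$, so the boundary criterion $\dist(\rho,\partial\Lambda) \geq 2^{k+1}$ is met. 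Thus the additional terms on the left-hand side are matched, up to an additive constant, by equally many separated-square terms on the right. This makes the extension from $A'$ to $A$ nearly free.

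Your proposal instead tries to prove the full packing/charging estimate from scratch. The clustering picture you describe (proximity graph $G_k$, charging non-separated squares to scale-$0$ points or to higher-scale separated squares) is indeed the shape of the argument in \cite{KP17}, but you do not actually carry out the two steps where all the work sits: (i) proving the bounded-multiplicity claim for the charging, which you explicitly label the ``main obstacle'' and then defer to an unspecified ``geometric packing argument,'' and (ii) verifying that the induction terminates correctly across scales, in particular how neutral vs.\ charged clusters interact with the boundary-adjusted definition of $\mathcal S_k^{\sep}$. As written, the upper bound is a plan, not a proof. Even if you wanted to re-derive the combinatorics rather than cite it, you would need to flesh out the packing bound: a uniform constant for how many pairwise $2M\cdot 2^{\alpha(k+1)}$-separated $2^k$-squares (or clusters thereof) can be charged to a single $2^{k^\star}$-square, accounting for the fact that $k^\star - k$ can be as large as $(\alpha-1)k + O(\log m)$, which is not a constant. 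The much shorter route is the paper's: leverage the known result for $A'(\rho)$ and observe that the intermediate scales $\lceil \log_2 d(\rho) + 1\rceil \leq k \leq \lfloor \log_2 d_\Lambda(\rho) - 1\rfloor$ contribute $1$ to both sides of the inequality.
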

\begin{proof}
Note that the lower bound is immediate since $A$ is defined as the sum of at least $\log_2(d_{\Lambda}(\rho)+1)$ terms, each bounded below by 1. We briefly explain how to obtain the upper bound from \cite[Proposition 2.1]{KP17}. The case $d(\rho) = 0$ (that is, $\rho$ is supported at a single vertex) is again immediate as $A(\rho) = n(\rho)$ and the right-hand side of the inequality is equal to $D_3 \cdot \max(\lfloor \log_2(d_{\Lambda}(\rho)) \rfloor,1)$. Therefore we assume $d(\rho) > 0$. Define $n'(\rho) := \lceil \log_2(M \cdot d(\rho)^\alpha) \rceil$ and
\[
A'(\rho) := \sum_{k = 0}^{n'(\rho)}|\mathcal S_k(\rho)|
\]
By \cite[Proposition 2.1]{KP17}, there exists an absolute constant $D > 0$ such that
\[
A'(\rho) \leq D\cdot \left( |\mathcal S_0(\rho)| + \sum_{k = 1}^{n'(\rho)} |\mathcal S_k^{\sep}(\rho)| \right)\,.
\]
Note $A(\rho) - A'(\rho) = n(\rho) - n'(\rho) \leq \log_2(d_{\Lambda}(\rho)/d(\rho)) + 1$. Let $m(\rho) = \lfloor \log_2(d_{\Lambda}(\rho)) -1 \rfloor$ and $m'(\rho) = \lceil \log_2(d(\rho)) + 1 \rceil$ and note that for $m'(\rho) \leq k \leq m(\rho)$, $|\mc S_k(\rho)| = |\mc S_k^{\sep}(\rho)| = 1$. Since $m(\rho) - m'(\rho) \geq \log_2(d_{\Lambda}(\rho)/d(\rho)) - 4$, we conclude that there exists a constant $D' > 0$ such that
\[
A(\rho) - A'(\rho) \leq D' \left(1 + \sum_{k = m'(\rho)}^{m(\rho)} |\mathcal S_k^{\sep}(\rho)| \right)\,.
\]
Combining these two bounds yields the desired result.
\end{proof}
\subsection{Expansion as a convex combination}
The following result is analogous to \cite[Theorem 2.2]{KP17}. We specify the necessary changes to the proof of \cite[Theorem 2.2]{KP17} needed to prove this version later in the section. Below and throughout the rest of the paper we write $\rho_1 \subset \rho$ if $\supp(\rho_1) \subset \supp(\rho)$, and $\rho_{1,j} = \rho_j$ for all $j \in \supp(\rho_1)$. In this case we say $\rho_1$ is a \emph{constituent} of $\rho$. 
\begin{theorem}\label{thm-1st convex expansion}
There exists a positive absolute constant $D_4$, a finite collection of ensembles $\mc F$, positive coefficients $(c_{\mc N})_{\mc N \in \mc F}$ summing to 1 and real $(K(\rho))_{\rho \in \mc N, \mc N \in \mc F}$, such that for every $\psi: \Lambda \to \mbb R$,
\[
\prod_{j \in \Lambda^o} \lambda_j(\psi_j) = \sum_{\mc N \in \mc F} c_{\mc N} \prod_{\rho \in \mc N}[1 + K(\rho) \cos(\langle \psi,\rho\rangle)].
\]
Additionally, the following properties are satisfied for each $\mc N \in \mc F$:
\begin{enumerate}[(a)]
	\item If $\rho, \rho' \in \mc N$ are distinct then $\dist(\rho,\rho') \geq M [\min(d_{\Lambda}(\rho), d_{\Lambda}(\rho'))]^\alpha$.
	\item If $\rho_1 \subset \rho \in \mc N$, $\rho_1 \neq \rho$, satisfies $\dist(\rho_1, \rho-\rho_1) \geq 2M d(\rho_1)^\alpha$, then $Q(\rho_1) \neq 0$ and
	\[
	2M \dist(\rho_1, \partial \Lambda)^\alpha > \dist(\rho_1, \rho - \rho_1).
	\]
	\item The coefficients $K(\rho)$ satisfy
	\[
	|K(\rho)| \leq e^{D_4 A(\rho)} \prod_{j \in \supp(\rho)} e^{\rho_j^2} |\hat\lambda_{j, |\rho_j|}|,
	\]
	where $\hat\lambda_{j,q}$ is the $q$th coefficient of the polynomial $\lambda_j$.
\end{enumerate}
\end{theorem}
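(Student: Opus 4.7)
The plan is to follow the multi-scale expansion procedure of \cite[Theorem 2.2]{KP17}, adapted to the modified diameter $d_\Lambda$ which incorporates the distance to $\partial \Lambda$ for non-neutral densities. I would begin with the Fourier expansion
\[
\prod_{j \in \Lambda^o} \lambda_j(\psi_j) = \sum_{\rho : \Lambda^o \to \mbb Z} \Big(\prod_{j \in \supp(\rho)} \hat\lambda_{j,\rho_j}\Big) e^{i \langle \psi, \rho\rangle},
\]
where $\hat\lambda_{j,q}$ is extended to $q \in \mbb Z$ by evenness with $\hat\lambda_{j,0} := 1$. Pairing each $\rho$ with $-\rho$ and using the realness of the $\lambda_j$ recasts the sum as one over (unsigned) densities $\rho$ with cosine factors $\cos(\langle \psi, \rho\rangle)$.

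Next, I would run a multi-scale clustering algorithm over scales $k = 0, 1, 2, \ldots$ which, at each scale, examines pairs of densities that are closer than a scale-dependent threshold and makes a binary choice: either keep them separated (which forbids certain Fourier contributions on that pair) or merge them into a single density to be re-examined at scale $k+1$. This binary tree of choices is precisely what produces the convex combination: each leaf yields an ensemble $\mc N$ with weight $c_{\mc N}$, and the weights sum to one because at every split the two branches reconstitute the parent factor. Property (a) is forced by the merging rule, since any two densities surviving at scale $k$ are separated by at least $M 2^{\alpha k} \asymp M \min(d_\Lambda(\rho), d_\Lambda(\rho'))^\alpha$. Property (c) follows from the Fourier product $\prod_j \hat\lambda_{j,\rho_j}$ together with a combinatorial cost $e^{D_4 A(\rho)}$ counting the paths through the decision tree that produce $\rho$, bounded via the square-covering proposition already proved.

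The main adaptation from \cite{KP17} is in the treatment of non-neutral densities, which is what produces property (b). When deciding whether to split a constituent $\rho_1$ from a cluster $\rho$ at scale $k$, I would compare $\dist(\rho_1, \rho - \rho_1)$ not only with $2M d(\rho_1)^\alpha$ but also with $2M \dist(\rho_1, \partial \Lambda)^\alpha$. A neutral constituent that is far from the boundary can always be split off cleanly, because its contribution to $\langle \psi, \rho_1 \rangle$ is invariant under a global integer shift of $\psi$ on $\supp(\rho_1)$ and thus decouples from the rest. Hence any $\rho_1$ with $\dist(\rho_1, \rho-\rho_1) \geq 2M d(\rho_1)^\alpha$ that remains unsplit inside $\rho$ must be non-neutral; moreover the merging rule ensures such a constituent is close enough to $\partial\Lambda$ that $2M \dist(\rho_1, \partial\Lambda)^\alpha$ exceeds $\dist(\rho_1, \rho - \rho_1)$, giving exactly both parts of (b). This is the step that ``pins'' a non-neutral cluster to the boundary, which in turn is what makes property 3 of Theorem~\ref{thm-renormalization step} hold.

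The hard part will be the book-keeping: verifying that the merging rule is independent of the order in which close pairs are processed, that the weights $c_{\mc N}$ telescope to one along the decision tree, and that the modified diameter $d_\Lambda$ is respected uniformly across scales without generating extra ``phantom'' cover elements that would spoil (c). A further subtlety inherited from \cite{KP17} is keeping the coefficient $D_4$ absolute; this requires checking that the boundary-proximity test introduced above contributes only an additive $O(1)$ to the depth $n(\rho)$ of the recursion, so that $A(\rho)$ remains governed by $\sum_k |\mc S_k^{\sep}(\rho)|$ as in the square-covering proposition.
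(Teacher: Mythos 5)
Your high-level plan (re-run the multi-scale expansion of \cite[Theorem~2.2]{KP17} with the modified diameter $d_\Lambda$) is the right one, and it matches the paper. But your account of how the key new property (b) is obtained has a conceptual error, and it also omits the actual mechanism the paper uses.

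The algorithm that produces the convex combination is purely combinatorial: one iterates \cite[Lemma~4.3]{KP17} over scales $k=0,1,2,\dots$ and, crucially, maintains a \emph{frozen} sub-ensemble $\mc G_k$ alongside the active $k$-ensemble $\mc E_k$. A density is moved into $\mc G_k$ once it lies at distance at least $M\,d_\Lambda(\rho)^\alpha$ from every remaining active density. The single adaptation from \cite{KP17} needed for the zero-boundary setting is that this freezing test uses $d_\Lambda$ rather than $d$. Your proposal misses the freezing step entirely and instead proposes to alter the split/merge rule itself by comparing $\dist(\rho_1,\rho-\rho_1)$ with $2M\dist(\rho_1,\partial\Lambda)^\alpha$; this is a different mechanism from the paper's and you give no argument that it produces a consistent convex expansion.

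More seriously, your justification of property (b) is not valid. You write that ``a neutral constituent that is far from the boundary can always be split off cleanly, because its contribution to $\langle\psi,\rho_1\rangle$ is invariant under a global integer shift of $\psi$ on $\supp(\rho_1)$ and thus decouples from the rest.'' That shift-invariance/decoupling argument is analytic in nature and belongs to a \emph{different} stage of the proof (the spin-wave construction of Proposition~\ref{prop-spin wave construction} and the change-of-contour identity \eqref{eq-change of contour}), not to the combinatorial Theorem~\ref{thm-1st convex expansion}. The merging algorithm never inspects the charge $Q(\rho_1)$, so neutrality cannot enter as a \emph{cause} of a density being split off. In the paper, $Q(\rho_1)\neq 0$ emerges as a \emph{consequence} of the geometry: one looks at the smallest scale $k$ at which some $\rho^*\in\mc Q_k$ meets both $\rho_1$ and $\rho-\rho_1$, observes (using property (a) of Lemma~\ref{lm-(k-1) to k ensemble}) that $R=2Md(\rho_1)^\alpha\leq 2^k$, hence $d(\rho_1)<2^{k-1}$ and $\rho_1$ must equal a single density $\rho_\mu\in\mc E_{k-1}$. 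Since $\rho_1$ was not frozen at scale $k-1$, there is a $\rho_\nu$ with $\dist(\rho_1,\rho_\nu)<M\,d_\Lambda(\rho_1)^\alpha$; as $\mc E_{k-1}$ is a $(k-1)$-ensemble this distance also exceeds $2^{k-1}\geq M\,d(\rho_1)^\alpha$, so $d_\Lambda(\rho_1)>d(\rho_1)$, which by definition forces $Q(\rho_1)\neq 0$ and $d_\Lambda(\rho_1)=\dist(\rho_1,\partial\Lambda)$, and also gives $2M\dist(\rho_1,\partial\Lambda)^\alpha>2^k\geq\dist(\rho_1,\rho-\rho_1)$. You should rework this part of your argument; without a correct derivation of (b) the rest of the renormalization (in particular property~3 of Theorem~\ref{thm-renormalization step}, which controls the unique non-neutral density meeting $\Lambda'$) does not go through.

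A more minor point: your opening Fourier expansion as a sum over signed $\rho$ with coefficients $\prod_j\hat\lambda_{j,\rho_j}$ does not by itself produce a convex combination, since these coefficients need not be non-negative or sum to one. The paper instead keeps each factor as $\lambda_j(\psi_j)=\sum_q\xi_j(q)\bigl[1+z_j(q)\cos(q\psi_j)\bigr]$ with $\xi_j$ a probability mass function built from the sub-Gaussian weights; this furnishes the first layer of the convex structure, after which Lemma~\ref{lm-(k-1) to k ensemble} supplies the rest.
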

Recall that $\dist(\Lambda', \partial \Lambda) \geq L/8$, so that any non-neutral density $\rho$ such that $\supp(\rho) \cap \Lambda' \neq \emptyset$ satisfies $d_{\Lambda}(\rho) \geq L/16$. Therefore, if $\mc N$ is an ensemble satisfying property (a) of Theorem~\ref{thm-1st convex expansion}, it contains at most one such density $\rho$, since the distance between any two such densities would exceed $2L = d(\Lambda)$. 

\subsection{Bounding the coefficients}
In this section we discuss how to modify the terms obtained by applying Theorem~\ref{thm-1st convex expansion} with $\psi = \phi + g$ to complete the proof of Theorem~\ref{thm-renormalization step}. In particular, we want to replace the coefficients $K(\rho)$ in terms of the form $\prod_{\rho \in \mc N} [1 + K(\rho)\cos(\langle \phi,\rho\rangle + \langle g,\rho\rangle)]$ with coefficients $z(\beta,\rho,\mc N)$ satisfying property 1 in Theorem~\ref{thm-renormalization step}. This will ensure in particular that the measure associated with such terms is positive.

\begin{theorem}\label{thm-coefficient modification}
There exists an absoute constant $D_5 > 0$ such that the following holds. Let $\mc N$ be an ensemble satisfying conditions $(a)$ and $(b)$ of Theorem~\ref{thm-1st convex expansion} and $(K(\rho))_{\rho \in \mc N}$ be real. Then there exist real $z(\beta,\rho,\mc N)$ and functions $a_{\rho,\mc N} : \Lambda \to \mbb R$ such that for every $g: \Lambda \to \mbb R$
\begin{align*}
&\int \prod_{\rho \in \mc N} [1 + K(\rho)\cos(\langle \phi,\rho\rangle + \langle g,\rho\rangle)] d\mbb P^{\GFF}_{\beta,\Lambda,\0}(\phi) \\
&= \int \prod_{\rho \in \mc N} [1 + z(\beta,\rho,\mc N)\cos(\langle \phi,\rho + \beta \Delta_{\Lambda} a_{\rho,\mc N}\rangle + \langle g,\rho\rangle)] d\mbb P^{\GFF}_{\beta,\Lambda,\0}(\phi).
\end{align*}
Additionally
\[
|z(\beta,\rho,\mc N)| \leq |K(\rho)|\exp\left[-\frac{1}{\beta}\left( \frac{1}{16}||\rho||_2^2 + D_5 \cdot \sum_{k = 1}^{n(\rho)} |\mc S^{\sep}_k(\rho)|\right)\right].
\]
If $(K(\rho))_{\rho \in \mc N}$ satisfy condition $(c)$ of Theorem~\ref{thm-1st convex expansion}, then there exist constants $\beta_0, c_2 > 0$ depending on $(\Gamma, \eta, \theta)$ only such that
\[
|z(\beta,\rho,\mc N)| \leq \exp\left[ - \frac{c_2}{\beta}\left(||\rho||_2^2 + \log_2(d_\Lambda(\rho)+1)\right)\right], \quad 0 < \beta < \beta_0\,.
\]
\end{theorem}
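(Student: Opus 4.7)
The plan is a multi-scale complex-shift (``spin-wave'') argument in the spirit of Fr\"ohlich--Spencer and KP17. The key identity is the following: if $\sigma:\Lambda\to\mbb R$ vanishes on $\partial\Lambda$ and $\supp(\sigma)\cap\supp(\rho')=\emptyset$ for every $\rho'\in\mc N\setminus\{\rho\}$, then analytic continuation of \eqref{eq-GFF MGF} to imaginary exponent (shifting $\phi\mapsto\phi+i\beta^{-1}\sigma$ in the Gaussian integral, legitimate because $\cos$ extends holomorphically) yields
\[
\E^{\GFF}_{\beta,\Lambda,\0}\!\bigl[F(\phi)\cos(\langle\phi,\rho\rangle+\langle g,\rho\rangle)\bigr]=e^{-\frac{1}{\beta}\langle\sigma,\rho\rangle+\frac{1}{2\beta}\langle\sigma,-\Delta_\Lambda\sigma\rangle}\,\E^{\GFF}_{\beta,\Lambda,\0}\!\bigl[F(\phi)\cos(\langle\phi,\rho+\Delta_\Lambda\sigma\rangle+\langle g,\rho\rangle)\bigr]
\]
for any $F$ depending only on $\{\langle\phi,\rho'\rangle\}_{\rho'\neq\rho}$. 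Setting $a_{\rho,\mc N}:=\sigma/\beta$ recovers the form $\rho+\beta\Delta_\Lambda a_{\rho,\mc N}$ inside the cosine, and $z(\beta,\rho,\mc N)$ is defined as $K(\rho)$ times the product of the extracted exponentials obtained by applying this identity sequentially to each $\rho\in\mc N$.

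First, I would carry out the shift scale by scale. At scale $k\geq 1$, for each separated square $s\in\mc S_k^{\sep}(\rho)$, I would choose $\sigma$ to be a smoothed, localized approximation of $-(\Delta_\Lambda)^{-1}(\rho\one_s)$ supported in a $2^{k+1}$-neighborhood of $s$; condition (a) of Theorem~\ref{thm-1st convex expansion} (distances $\geq M[\min d_\Lambda]^\alpha$ with $M=2^{16}$) guarantees that $\sigma$ vanishes on the supports of all other $\rho'$, so the identity applies cleanly. A standard local Green's-function calculation shows that $\frac{1}{2\beta}\langle\sigma,-\Delta_\Lambda\sigma\rangle-\frac{1}{\beta}\langle\sigma,\rho\rangle\leq -D_5/\beta$ per separated square at scale $k$ (where $D_5>0$ is absolute), using that a balanced local piece of $\rho$ has capacity bounded below by an absolute constant. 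At scale $0$, each vertex $j\in\supp(\rho)$ contributes $\exp(-\tfrac{1}{16\beta}\rho_j^2)$ from the uniform positive lower bound on $G_\Lambda(j,j)$; summing yields the $\tfrac{1}{16\beta}\|\rho\|_2^2$ term and proves the first displayed bound.

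The second displayed bound follows by combining the first with condition (c) of Theorem~\ref{thm-1st convex expansion}, the sub-Gaussian bound $|\hat\lambda_{j,q}|\leq\Gamma e^{(\eta+\theta/\beta)q^2}$ with $\theta<1/16$, the square-covering proposition $A(\rho)\lesssim|\mc S_0(\rho)|+\sum_{k\geq 1}|\mc S_k^{\sep}(\rho)|$, and its lower bound $A(\rho)\geq\log_2(d_\Lambda(\rho)+1)$. The $\theta/\beta$ exponent in $\hat\lambda$ contributes at most $(\theta/\beta)\|\rho\|_2^2$ to $\log|K(\rho)|$, and since $\tfrac{1}{16}-\theta>0$ the margin in the first bound survives; the $D_4 A(\rho)$ factor in $|K(\rho)|$ is dominated by the $\tfrac{D_5}{\beta}\sum_k|\mc S_k^{\sep}(\rho)|$ gain combined with $|\mc S_0(\rho)|\leq\|\rho\|_2^2$, so for $\beta$ sufficiently small (depending on $\Gamma,\eta,\theta$) the net bound becomes $\exp[-\tfrac{c_2}{\beta}(\|\rho\|_2^2+\log_2(d_\Lambda(\rho)+1))]$.

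The main obstacle is handling charged (non-neutral) densities in the multi-scale shift: for a non-neutral $\rho$ the function $-(\Delta_\Lambda)^{-1}\rho$ does not decay away from $\supp(\rho)$, so a straightforward cutoff produces a large residual. Condition (b) of Theorem~\ref{thm-1st convex expansion}---that any isolated constituent $\rho_1\subset\rho$ with $\dist(\rho_1,\rho-\rho_1)\geq 2Md(\rho_1)^\alpha$ must be charged only when $2M\dist(\rho_1,\partial\Lambda)^\alpha>\dist(\rho_1,\rho-\rho_1)$---combined with the modified diameter $d_\Lambda$ (which records $\dist(\rho,\partial\Lambda)$) ensures that any charged piece is close enough to $\partial\Lambda$ that a boundary-aware shift localized strictly inside $\Lambda^o$ remains available, keeping $\sigma|_{\partial\Lambda}=0$ as required for the contour shift to be legitimate. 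Once the localization is in place, verifying that successive shifts compound additively in the exponent and that the cosines remain real is routine but lengthy.
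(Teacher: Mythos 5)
Your overall strategy matches the paper's: construct a spin wave $a_{\rho,\mc N}$ scale by scale and apply a change-of-contour identity to each density, extracting an exponential factor $e^{-E_\beta(a_\rho,\rho)}$ with $E_\beta(a,\rho) = \langle a,\rho\rangle - \tfrac{\beta}{2}\langle a,-\Delta_\Lambda a\rangle$. The reduction from the second bound to the first via conditions (c), the sub-Gaussian bound on $\hat\lambda$ with $\theta<1/16$, and the two-sided square-covering estimate on $A(\rho)$ is also the same.

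There are, however, two concrete gaps. First, the condition you impose on $\sigma$ (disjoint from $\supp(\rho')$ for all $\rho'\neq\rho$) is both too weak and, strictly, unachievable. What the argument actually requires --- and what the paper's Proposition~\ref{prop-spin wave construction} is engineered to deliver --- is that $a_\rho$ be \emph{constant} (not necessarily zero) on $D^+(\rho')$ for every $\rho'\in\mc N(\rho)$ of comparable or smaller modified diameter, and zero on supports of non-neutral $\rho'$; this, together with $\supp(\Delta_\Lambda a_{\rho'})\subseteq D(\rho')$ and $\sum_j(\Delta_\Lambda a_{\rho'})_j=0$, is what kills the cross terms $\langle a_\rho,\rho'+\beta\Delta_\Lambda a_{\rho'}\rangle$ that arise because all the cosines share the same $\phi$ and get shifted simultaneously. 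Since $\supp(a_\rho)=D(\rho)\cap\Lambda^o$ has radius $2d_\Lambda(\rho)$, which can easily engulf $D(\rho')$ for much smaller $\rho'$ even under condition~(a), you cannot keep the supports disjoint; ``constant there'' is the right substitute, and without it the coefficients $z(\beta,\rho,\mc N)$ are not well-defined density by density. Second, your reading of condition~(b) is inverted: it is used to show that a separated square $s\in\mc S_k^{\sep}(\rho)$ carries a net charge \emph{and} lies at distance $>2^{k+1}$ from $\partial\Lambda$, so that the spin wave of radius $\sim 2^{k-1}$ around $s$ fits entirely inside $\Lambda^o$. Charged constituents are guaranteed to be \emph{far} from the boundary at the relevant scale, not close; the modified diameter $d_\Lambda(\rho)$ records the distance to $\partial\Lambda$ precisely so that the covering at scale $k$ stops producing separated squares once $2^{k+1}$ exceeds that distance. (Also, ``balanced'' should read ``charged'': the energy gain per separated square comes from $Q(s\cap\rho)\neq 0$.)
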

Theorem~\ref{thm-renormalization step} is an immediate consequence of Theorem~\ref{thm-1st convex expansion} and Theorem~\ref{thm-coefficient modification}. Thus, we turn to the proof of Theorem~\ref{thm-coefficient modification}. We fix an ensemble $\mc N$ satisfying properties $(a)$ and $(b)$ of Theorem~\ref{thm-1st convex expansion}, real $(K(\rho))_{\rho \in \mc N}$, and $g : \Lambda \to \mbb R$. We also denote for a density $\rho$ and a function $a: \Lambda \to \mbb R$
\begin{equation}\label{eq-Energy}
E_\beta(a,\rho) = \langle a,\rho \rangle - \frac{\beta}{2}\sum_{j \sim l} (a_j - a_l)^2 = \langle a,\rho \rangle - \frac{\beta}{2} \langle a, - \Delta_{\Lambda} a \rangle.
\end{equation}
The proof of Theorem~\ref{thm-coefficient modification} is similar to that of \cite[Theorem 2.3]{KP17} with two minor modifications which we will note. For $\rho \in \mc N$, let $\mc N(\rho)$ be the following sub-ensemble
\[
\mc N(\rho) := \{\rho' \in \mc N \,:\, d_\Lambda(\rho') \leq 2d_\Lambda(\rho)\}.
\]
We also denote
\[
D^+(\rho) := \{j \in \Lambda \,:\, \dist(j,D(\rho)) \leq 1 \}.
\]
With this notation, we can state the following proposition, which is proved in the next section.
\begin{proposition}\label{prop-spin wave construction}
There exists an absolute constant $D_6 > 0$ such that the following holds. For each $\rho \in \mc N$ there exists a function $a_{\rho,\mc N}$, denoted by $a_\rho$ for clarity of notation, such that the following hold:
\begin{enumerate}
	\item For every $\rho' \in \mc N(\rho)$, $a_\rho$ is constant on $D^+(\rho')$.
	\item $\supp(a_\rho) \subseteq D(\rho)\cap \Lambda^o$. In particular, by property (a) of Theorem~\ref{thm-1st convex expansion} $\supp(a_\rho)$ and $\supp(\rho')$ are disjoint for any $\rho' \in \mc N \setminus \mc N(\rho)$.
	\item $\supp(a_\rho) \cap \supp(\rho') = \emptyset$ for all $\rho' \in \mc N \setminus \{\rho\}$ such that $Q(\rho') \neq 0$.
	\item $\supp(\Delta_{\Lambda}a_\rho) \subseteq D(\rho)$.
	\item 
	\[
	E_\beta(a_\rho,\rho) \geq \frac{1}{\beta}\left( \frac{1}{16}||\rho||_2^2 + D_5 \cdot \sum_{k = 1}^{n(\rho)} |\mc S^{\sep}_k(\rho)|\right).
	\]
\end{enumerate}
\end{proposition}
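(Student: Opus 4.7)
I would build $a_\rho$ as a scale-by-scale ``spin wave'' $a_\rho = \sum_{k=0}^{n(\rho)} a_\rho^{(k)}$ in the spirit of Fr\"ohlich--Spencer and the presentation in \cite{KP17}, adapted so that each $a_\rho^{(k)}$ vanishes outside $D(\rho)\cap\Lambda^o$. For each $k$ I fix a grid of aligned $2^k\times 2^k$ sub-squares of $\Lambda$ and require $a_\rho^{(k)}$ to be constant on every cell. The grids are chosen so that for every $\rho'\in\mc N(\rho)$ the set $D^+(\rho')$ fits inside a single scale-$k$ cell whenever $2^k$ is at least a small multiple of $d_\Lambda(\rho')$. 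This is possible by property (a) of Theorem~\ref{thm-1st convex expansion}, which separates distinct densities in $\mc N$ by at least $M[\min d_\Lambda]^\alpha$, and it yields property (1) for free.

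\textbf{Optimization.} Each $a_\rho^{(k)}$ is then obtained by maximizing $\langle a,\rho\rangle - \tfrac{\beta}{2}\langle a,-\Delta_\Lambda a\rangle$ over functions $a$ admissible at scale $k$, supported in an $M 2^{\alpha(k+1)}$-neighborhood of $\mc S_k^{\sep}(\rho)$, and vanishing outside. At the bottom scale $k=0$ the cells are single vertices, so each charge $\rho_j$ yields an optimal local bump of height proportional to $\rho_j/\beta$ and energy $\rho_j^2/(C\beta)$ for an absolute constant $C$; summing gives the $\tfrac{1}{16\beta}\|\rho\|_2^2$ term. At each scale $k\ge 1$, the two-dimensional logarithmic estimate that a function equal to $1$ on a $2^k\times 2^k$ square and $0$ outside an annulus of outer radius $M 2^{\alpha(k+1)}$ has Dirichlet energy of order $1/\log(M 2^{(\alpha-1)k})=O(1)$ (which uses $\alpha>1$) shows that each separated square $s\in\mc S_k^{\sep}(\rho)$ contributes $\Theta(1/\beta)$ to $E_\beta(a_\rho,\rho)$. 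Summing over $k$ produces the remaining $\tfrac{D_5}{\beta}\sum_k |\mc S_k^{\sep}(\rho)|$ term.

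\textbf{Structural checks.} Properties (2) and (4) follow directly from the support restrictions imposed on each $a_\rho^{(k)}$, since $-\Delta_\Lambda$ enlarges support by only one vertex layer which remains inside $D(\rho)$. For (3), property (a) already places any $\rho'\in\mc N\setminus\mc N(\rho)$ outside $D(\rho)$, and for charged $\rho'\in\mc N(\rho)\setminus\{\rho\}$ property (1) forces $a_\rho$ to be constant on $D^+(\rho')$; I then subtract this constant scale by scale, which leaves both $\langle a_\rho,\rho\rangle$ and the Dirichlet energy unchanged since $\supp(\rho)\cap D^+(\rho')=\emptyset$ by property (a).

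\textbf{Main obstacle.} The delicate point is the non-neutral case. When $Q(\rho)\ne 0$, the zero boundary condition forces $a_\rho$ to decay from its peak value to $0$ over a distance comparable to $\dist(\rho,\partial\Lambda)$ rather than the free-space diameter $d(\rho)$. The modified diameter $d_\Lambda(\rho)\ge\dist(\rho,\partial\Lambda)$ and the definition of $\mc S_k^{\sep}(\rho)$ for single-square charged densities (which requires $\dist(\rho,\partial\Lambda)\ge 2^{k+1}$) are engineered precisely so that the decay annulus of each $a_\rho^{(k)}$ fits inside $\Lambda^o$ and the logarithmic energy estimate survives unchanged. Property (b) of Theorem~\ref{thm-1st convex expansion} is then needed to rule out tight internal clusters of charge within $\rho$ that would otherwise inflate the scale-$k$ energy bound past $O(1/\beta)$ per separated square.
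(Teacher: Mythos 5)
Your plan is the paper's approach: a scale-by-scale spin wave $a_\rho = a_{0,\rho}+\sum_{k\ge 1}\sum_{s\in\mc S_k^{\sep}(\rho)}a_{s,\rho}$, with the scale-$0$ piece supplying the $\|\rho\|_2^2/(16\beta)$ term, each separated square contributing $\Theta(1/\beta)$, and $d_\Lambda$ together with the modified definition of $\mc S_k^{\sep}$ handling the non-neutral / zero-boundary case. You also correctly identify the main new difficulty relative to \cite{KP17}. Two of your ``structural checks,'' however, contain genuine gaps.

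For property (3), the ``subtract this constant scale by scale'' step fails. If $a_\rho^{(k)}$ is equal to a nonzero constant on $D^+(\rho')$, subtracting that constant only on $D^+(\rho')$ introduces jumps on $\partial D^+(\rho')$ (so the Dirichlet energy changes), while subtracting it globally changes $\langle a_\rho^{(k)},\rho\rangle$ by that constant times $Q(\rho)$, which is nonzero precisely in the case you worry about; so the claim that both quantities are unchanged does not hold. The paper's argument is of a different kind: one shows that a charged $\rho'\ne\rho$ cannot even touch $\supp(a_{s,\rho})$. Since $\supp(a_{s,\rho})$ lies at distance $>2^{k-1}$ from $\partial\Lambda$, any charged $\rho'$ meeting it has $d_\Lambda(\rho')>2^{k-2}$, and then property (a) of Theorem~\ref{thm-1st convex expansion} forces $\dist(\rho,\rho')\ge M\min(d_\Lambda(\rho),d_\Lambda(\rho'))^\alpha>10\cdot 2^{k-2}\ge 2^{k+1}+\dist(s,\rho')$, a contradiction. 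You need this separation/contradiction argument, not constant-subtraction.

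For scale $0$, placing a bump of height $\propto\rho_j/\beta$ at every $j\in\supp(\rho)$ does not by itself yield $\|\rho\|_2^2/(16\beta)$, because adjacent bumps share edges and the Dirichlet form is not additive across them. The paper (via \cite[Lemma 4.5]{KP17}) uses bipartiteness of $\Lambda$: pick the colour class $\Omega_1$ carrying $\ge\tfrac12\|\rho\|_2^2$ and set $a_{0,\rho}=\rho/(4\beta)$ on $\Omega_1$ and $0$ elsewhere, so that no two bumps are adjacent. Finally, you implicitly assume $E_\beta(a_\rho,\rho)=E_\beta(a_{0,\rho},\rho)+\sum_{k,s}E_\beta(a_{s,\rho},\rho)$; this additivity needs justification, namely that on each edge of $\Lambda$ at most one of $a_{0,\rho}$ and the $a_{s,\rho}$ is non-constant, which is what the nested support and constancy conditions in Lemma~\ref{lm-initial spin wave} and Proposition~\ref{prop-square spin wave} are engineered to provide.
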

We call the function $a_\rho$ a \emph{spin wave} associated to $\rho$. Provided with this proposition, the proof of Theorem~\ref{thm-coefficient modification} proceeds in exactly the same way as in \cite[Sections 2.3.3 and 2.3.4]{KP17} except we apply the following equality of Gaussian integrals instead of \cite[(2.19)]{KP17}.
\begin{equation}\label{eq-change of contour}
\int e^{i\langle \phi, \tau\rangle} d\mbb P^{\GFF}_{\beta,\Lambda,\0} = e^{-E_\beta(a,\tau)}\int e^{i\langle \phi, \tau + \beta \Delta_\Lambda a\rangle}d\mbb P^{\GFF}_{\beta,\Lambda,\0}, \quad \tau, a: \Lambda \to \mbb R, \quad \supp(a) \subseteq \Lambda^o\,.
\end{equation}
The proof of \eqref{eq-change of contour} is exactly the same as that of \cite[(2.19)]{KP17}, so we do not provide further details.
\subsection{Proof of Theorem~\ref{thm-1st convex expansion}}
As mentioned, the proof is very similar to that of \cite[Theorem 2.2]{KP17} and \cite[Theorem 2.1]{FrohlichSpencer81}.

To begin, let $C(N) = \sum_{q = 1}^N e^{-q^2}$, and $N_j$ be the degree of the polynomial $\lambda_j$. Let $\xi : \mbb Z^{\Lambda^o} \to \mbb R$ be the following function
\[
\xi(\vec q) := \begin{cases}
\prod_{j \in \Lambda^o} \frac{e^{-q_j^2}}{C(N_j)} & 1 \leq q_j \leq N_j \, \forall j \in \Lambda^o,\\
0 &\text{ otherwise}.
\end{cases}
\]
Note that $\sum_{\vec{q}}\xi(\vec{q}) = 1$. We have (see \cite[(4.14)]{KP17})
\[
\prod_{j \in \Lambda^o} \lambda_j(\psi_j) = \sum_{\vec q \in \mbb Z^{\Lambda^o}} \xi(\vec q) \prod_{ j \in \Lambda^o} \left[ 1 + z_j(q_j) \cos(q_j \psi_j)\right],
\]
where 
\[
z_j(q_j) = 2C(N_j) e^{q_j^2} \hat\lambda_{j, q_j}.
\]
Note that $2C(N) < 1$ for all $N$ so $|z_j(q_j)| \leq e^{q_j^2} \hat\lambda_{j,q_j}$. Therefore, it suffices to prove that for each $\vec q$ such that $\xi(\vec{q}) > 0$ there exists an ensemble $\mc F_{\vec q}$, positive coefficients $(c_\mc N)_{\mc N \in \mc F_{\vec q}}$ summing to 1, and real $(K(\rho))_{\rho \in \mc N, \mc N \in \mc F_{\vec q}}$ satisfying properties (a)-(c) of Theorem~\ref{thm-1st convex expansion} such that
\[
\prod_{ j \in \Lambda^o} \left[ 1 + z_j(q_j) \cos(q_j \psi_j)\right] = \sum_{\mc N \in \mc F_{\vec q}} c_{\mc N} \prod_{\rho \in \mc N} [1 + K(\rho)\cos(\langle\psi,\rho\rangle)].
\] 
For the rest of the proof, we fix one such vector $\vec q$. The idea is to refine the ensemble $\{\rho^j\}_{j \in \Lambda^o}$, where $\rho^j = q_j \delta_j$, until the conditions of the theorem are met. To make this idea precise, we introduce some more notation. A charge density $\rho_1$ is said to be compatible with an ensemble $\mc E$ if there exist coefficients $\{\epsilon(\rho_1,\rho)\}_{\rho \in \mc E}$ such that $\epsilon(\rho_1,\rho) \in \{-1,0,1\}$ and
\[
\rho_1 = \sum_{\rho \in \mc E} \epsilon(\rho_1,\rho) \rho.
\]
Note the coefficients are unique since the densities in $\mc E$ have disjoint supports. We say an ensemble $\mc E_1$ is a \emph{parent} of an ensemble $\mc E_2$, and write $\mc E_1 \rightarrow \mc E_2$, if every charge $\rho \in \mc E_2$ is compatible with $\mc E_1$. 

For an integer $k \geq -1$, we say $\mc E$ is a $k$-ensemble if 
\[
\dist(\rho_1,\rho_2) > 2^k \quad \forall \rho_1,\rho_2 \in \mc E,\, \rho_1 \neq \rho_2.
\]
We also let $A_k(\rho) := |\mc S_k(\rho)|$  for $k \geq 0$ and $A_{-1}(\rho) := A_0(\rho) = |\supp(\rho)|$.

\begin{lemma}[{\cite[Lemma 4.3]{KP17}}]\label{lm-(k-1) to k ensemble}
	Let $k\geq 0$, $\mc E$ be an ensemble and $(K(\rho))_{\rho \in \mc E}$ be real. There exists an absoute constant $C_1 > 0$ and a family of $k$-ensembles $\mc F'$ with $\mc E \rightarrow \mc E' \in \mc F'$ for every $\mc E' \in \mc F'$, positive $(c_{\mc E'})_{\mc E' \in \mc F'}$ summing to 1, and real $(K'(\rho'))_{\rho' \in \mc E', \mc E' \in \mc F'}$  such that for every $\psi: \Lambda \to \mbb R$
	\[
	\prod_{\rho \in \mc E}[1 + K(\rho)\cos(\langle \psi, \rho\rangle)] = \sum_{\mc E'} c_{\mc E'} \prod_{\rho' \in \mc E'} [1 + K'(\rho')\cos(\langle \psi, \rho'\rangle)].
	\]
	Moreover, for every $\mc E' \in \mc F'$ and $\rho' \in \mc E'$ the following are satisfied:
	\begin{enumerate}[(a)]
		\item For any distinct $\rho_1, \rho_2 \subset \rho'$ compatible with $\mc E$
		\[
		\dist(\rho_1, \rho_2) \leq 2^k.
		\]
		\item Let $\epsilon(\rho',\rho)$ be such that $\rho' = \sum_{\rho \in \mc E} \epsilon(\rho',\rho)\rho$. Then
		\[
		|K'(\rho')| \leq 3^{|\{\rho \in \mc E \,:\, \dist(\rho', \rho)\leq 2^k\}|} \prod_{\rho \in \mc E} |K(\rho)|^{|\epsilon(\rho',\rho)|}.
		\]
		Moreover, if $\mc E$ is a $(k-1)$-ensemble,
		\[
		|K'(\rho')| \leq e^{C_1 A_{k-1}(\rho')}\prod_{\rho \in \mc E} |K(\rho)|^{|\epsilon(\rho',\rho)|}.
		\]
	\end{enumerate}
\end{lemma}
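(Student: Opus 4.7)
The plan is to iteratively merge pairs of densities in $\mc E$ that lie within graph distance $2^k$ of each other, using the elementary identity $\cos(a)\cos(b) = \tfrac{1}{2}\cos(a+b) + \tfrac{1}{2}\cos(a-b)$. At each step we pick any pair $\rho_1, \rho_2$ in the current ensemble with $\dist(\rho_1,\rho_2) \le 2^k$, apply a four-way split to the sub-product $[1 + K(\rho_1) c(\rho_1)][1 + K(\rho_2) c(\rho_2)]$, where $c(\rho) := \cos\langle \psi, \rho\rangle$, and continue. Since each split reduces the ensemble size by one, the procedure terminates after finitely many steps in a $k$-ensemble.

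The single-step identity, obtained by expanding and applying $c(\rho_1) c(\rho_2) = \tfrac{1}{2}[c(\rho_1+\rho_2) + c(\rho_1-\rho_2)]$, is
\[
[1 + K_1 c(\rho_1)][1 + K_2 c(\rho_2)] = \frac{1}{4}\sum_{j=1}^{4} [1 + K_j^\star c_j^\star],
\]
with $K_i := K(\rho_i)$ and $(c_j^\star, K_j^\star) \in \{(c(\rho_1), 4K_1),\ (c(\rho_2), 4K_2),\ (c(\rho_1+\rho_2), 2K_1 K_2),\ (c(\rho_1-\rho_2), 2K_1 K_2)\}$. Multiplying by the remaining factors $\prod_{\rho \in \mc E \setminus \{\rho_1,\rho_2\}}[1 + K(\rho) c(\rho)]$ replaces $\{\rho_1,\rho_2\}$ in the current ensemble by one of $\{\rho_1\}, \{\rho_2\}, \{\rho_1+\rho_2\}, \{\rho_1-\rho_2\}$, each carrying weight $\tfrac14$. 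Iterating until no close pair remains yields the family $\mc F'$: each terminal branch produces an ensemble $\mc E'$, and its weight $c_{\mc E'}$ is the product of the $1/4$'s along that branch (summed over branches that happen to coincide).

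Property (a) follows from an invariant preserved throughout the iteration: the ``proximity graph'' on the constituents (elements of $\mc E$ used to build $\rho'$) is connected, since each split either joins two adjacent vertices (the merge options) or deletes one of them (the drop options), both preserving connectedness. Equivalently, for any non-trivial bipartition of the constituents there must be an edge across the cut, which is precisely condition (a). For the coefficient bound in (b), each split multiplies the working coefficient by at most $4$, and producing a given $\rho'$ requires a number of splits bounded by $N := |\{\rho \in \mc E : \dist(\rho', \rho) \le 2^k\}|$, since every density merged into or dropped adjacent to the eventual $\rho'$ must be within $2^k$ of it. A careful accounting that exploits the fact that ``merge'' splits only incur a factor of $2$ (not $4$) yields the stated $3^N$ bound, after possibly weighting the four alternatives non-uniformly.

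For the refined estimate when $\mc E$ is a $(k-1)$-ensemble, the constituents of $\rho'$ are pairwise at distance $> 2^{k-1}$, so a simple packing argument bounds the number of them inside any $2^{k-1} \times 2^{k-1}$ square by an absolute constant; hence $N \le C \cdot A_{k-1}(\rho')$ for some absolute $C$, and substituting gives $|K'(\rho')| \le e^{C_1 A_{k-1}(\rho')} \prod_\rho |K(\rho)|^{|\epsilon(\rho',\rho)|}$. The main technical obstacle I anticipate is the combinatorial bookkeeping: tracking how coefficients compound along all branches of the iteration tree, correctly aggregating weights across branches that terminate in the same $\mc E'$, and verifying that a suitable non-uniform choice of the four weights produces the tight $3^N$ constant rather than the naive $4^N$ that a uniform choice gives.
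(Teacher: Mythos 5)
Your strategy --- iterative pairwise merging of $\mathcal E$-densities within distance $2^k$ using $\cos a \cos b = \tfrac12[\cos(a+b)+\cos(a-b)]$ and tracking the convex weights along each branch --- is the same one underlying the cited \cite[Lemma 4.3]{KP17}, and the termination, ensemble-structure, and ``no new closeness is created'' checks all go through. Two points need tightening.

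On property (a): the connectedness invariant yields that for any decomposition $\rho'=\rho_1+\rho_2$ into two non-trivial disjoint sub-densities compatible with $\mathcal E$, one has $\dist(\rho_1,\rho_2)\le 2^k$. Read literally, though, the statement in the lemma asks for arbitrary distinct compatible $\rho_1,\rho_2\subset\rho'$, which no merging process can guarantee: merging a chain $a$--$b$--$c$ with $\dist(a,b),\dist(b,c)\le 2^k$ but $\dist(a,c)>2^k$ produces a $\rho'$ for which $\rho_1=a$, $\rho_2=c$ fails. The bipartition version is what is actually proved and is all that is used later in the paper (in checking condition (b) of Theorem~\ref{thm-1st convex expansion}), so your reading is the right one, but you should flag that your argument proves that version rather than the stronger-sounding literal statement.

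On property (b), the accounting should be pinned down rather than deferred as ``the main obstacle.'' Take weights $\bigl(\tfrac13,\tfrac13,\tfrac16,\tfrac16\bigr)$ in the splitting identity, so \emph{both} the merge and drop options multiply the surviving coefficient by exactly~$3$; then if $\rho'$ has $m$ constituents and $d$ drops contribute to its coefficient, $|K'(\rho')|=3^{m-1+d}\prod_\rho|K(\rho)|^{|\epsilon(\rho',\rho)|}$. To bound $m-1+d\le N-1$: each drop removes a (possibly already-merged) density $\sigma$ lying within $2^k$ of a sub-density of $\rho'$ and hence of $\rho'$; assign to it a constituent of $\sigma$ realizing this distance, which is a non-constituent element of $\mathcal E$ within $2^k$ of $\rho'$, and these assigned elements are distinct across drops, giving $d\le N-m$. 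For the refined estimate the packing argument must cover \emph{all} $N$ densities within $2^k$ of $\rho'$, not only the constituents: since $\mathcal E$ is a $(k-1)$-ensemble they are $2^{k-1}$-separated, and each meets the $2^k$-neighborhood of one of the $A_{k-1}(\rho')$ covering squares, so a volume count gives $N\le C\,A_{k-1}(\rho')$ for an absolute $C$.
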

Theorem~\ref{thm-1st convex expansion} is then proved by an induction process. We begin by letting $\mc Q_{-1} := (\rho^j)_{j \in \Lambda ^o}$, where $\rho^j = q_j \delta_j$ and $\mc G_{-1} := \emptyset$. Note that $\mc Q_{-1}$ is a $-1$-ensemble. For $k \geq 0$, assume we have generated $\mc Q_{k-1}$ and $\mc G_{k-1}$ such that $\mc E_{k-1} := \mc Q_{k-1}\setminus \mc G_{k-1}$ is a $(k-1)$-ensemble. If $|\mc E_{k-1}| \leq 1$, let $\mc Q_k = \mc Q_{k-1}$, $\mc G_k = \mc G_{k-1}$. Otherwise, apply Lemma~\ref{lm-(k-1) to k ensemble} to $\mc E_{k-1} := \mc Q_{k-1} \setminus \mc G_{k-1}$ to obtain
\begin{align*}
\prod_{\rho \in Q_{k-1}} [1 + K_{k-1}(\rho)\cos(\langle \psi,\rho\rangle)] &= \prod_{\rho \in \mc G_{k-1}} [1 + K_{k-1}(\rho)\cos(\langle \psi,\rho\rangle)] \sum_{\mc E' \in \mc F'} c_{\mc E'}\prod_{\rho \in \mc E'}[1 + K'(\rho)\cos(\langle \psi,\rho\rangle)]\\
&= \sum_{\mc E' \in \mc F'} c_{\mc E'} \prod_{\rho \in \mc E' \cup \mc G_{k-1}} [1 + K_k(\rho)\cos(\langle \psi,\rho)],
\end{align*}
where $K_k(\rho) = K_{k-1}(\rho)$ for $\rho \in \mc G_{k-1}$. We set $\mc Q_{k, \mc E'} := \mc E' \cup \mc G_{k-1}$, and continue the process with each $\mc Q_{k,\mc E'}$ separately. Given $\mc Q_k$, we construct $\mc G_k$ as follows. We let $\mc G_{k,0} := \mc G_{k-1}$. Then, we order the densities in $\mc Q_k \setminus \mc G_{k-1}$ in ascending order of $d_\Lambda(\rho)$. We let $\{\rho_n\}_{n = 1}^{N}$ be this sequence, where $N = |\mc Q_k \setminus \mc G_{k-1}|$. For $n \geq 1$ we let $\mc G_{k,n} = \mc G_{k,n-1} \cup \{\rho_n\}$ if
\[
\dist(\rho', \rho_n) \geq M d_\Lambda(\rho_n)^\alpha,\quad \rho' \in \mc Q_k \setminus (\mc G_{k,n-1} \cup \{\rho_n\}).
\]
Finally, we let $\mc G_k = \mc G_{k,N}$. With this definition, $\mc G_{k-1} \subset \mc G_k$, and $\mc E_k := \mc Q_k \setminus \mc G_k$ is a $k$-ensemble (since $\mc E_k \subset \mc E'$). Moreover, if $\rho,\rho' \in \mc Q_k$ are distinct densities such that
\[
\dist(\rho,\rho') < M [\min(d_{\Lambda}(\rho),d_\Lambda(\rho'))]^\alpha,
\]
then $\rho,\rho' \in \mc E_k$. Thus, $|\mc E_k | \leq 1$ implies that $Q_k$ satisfies condition (a) of Theorem~\ref{thm-1st convex expansion}.

There exists a $k^* = k^*(L)$ such that any $k$-ensemble $\mc E$, with $k \geq k^*$, satisfies $|\mc E|\leq 1$. Therefore we can terminate the process above after $k^*$ iterations to obtain a family $\mc F$, positive coefficients $(c_\mc N)_{\mc N \in \mc F}$ summing to 1, and $(K(\rho))_{\rho \in \mc N, \mc N \in \mc F}$ satisfying condition (a) of Theorem~\ref{thm-1st convex expansion} such that
\[
\prod_{j \in \Lambda^o} \lambda_j(\psi_j) = \sum_{\mc N \in \mc F} c_{\mc N} \prod_{\rho \in \mc N}[1 + K(\rho) \cos(\langle \psi,\rho\rangle)].
\]
To check condition (b), let $\mc N \in \mc F$, $\rho \in \mc N$, and $\rho_1 \subset \rho$ be such that
\[
\dist(\rho_1, \rho - \rho_1) \geq 2M d(\rho_1)^\alpha := R.
\]
For each integer $1 \leq k \leq k^*$ we let $\mc Q_k$ be the (unique) ensemble generated at the $k$th stage of the iterative process by which we constructed $\mc F$ such that $\mc N$ was constructed by successive applications of Lemma~\ref{lm-(k-1) to k ensemble} to $\mc Q_k$. Note $\mc Q_k \rightarrow \mc N$. Let $k$ be the smallest integer such that there exists $\rho^* \in \mc Q_k$ satisfying $\supp(\rho_1) \cap \supp(\rho^*) \neq \emptyset$ and $\supp(\rho-\rho_1)\cap \supp(\rho^*) \neq \emptyset.$ Then there exist $\rho_\mu,\rho_\nu \in \mc E_{k-1}$ such that $\rho_\mu,\rho_\nu \subset \rho^*$, $\rho_\mu \subset \rho_1$, and $\rho_\nu \subset \rho - \rho_1$.  We have $\dist(\rho_\mu, \rho_\nu) \geq R$ and, by property (a) of Lemma~\ref{lm-(k-1) to k ensemble}, $R \leq 2^k$. This implies $d(\rho_1) < 2^{k-1}$ which gives $\rho_1=\rho_\mu$ since $\rho_1$ is compatible with $\mc E_{k-1}$ (which contains no densities within distance $2^{k-1}$ of $\rho_\mu$). Moreover, $\mc E_{k-1}$ is a $(k-1)$-ensemble and by assumption there exists $\rho_\nu \in \mc E_{k-1}$ such that
\[
2^{k-1} < \dist(\rho_1,\rho_\nu) < M d_\Lambda(\rho_1)^\alpha.
\] 
It follows that $Q(\rho_1) \neq 0$, $d_\Lambda(\rho_1) = \dist(\rho_1,\partial \Lambda)$, and
\[
2M\dist(\rho_1,\partial \Lambda)^\alpha > 2^k \geq \dist(\rho_1, \rho-\rho_1).
\]

To check condition (c), take $\mc N \in \mc F$, $\rho \in \mc N$ and $\mc Q_k$ be as above. Note that since $2^{n(\rho)} \geq M d_\Lambda(\rho)^\alpha$ and $\mc E_{n(\rho)}$ is a $n(\rho)$-ensemble, we must have $\rho \in \mc G_{n(\rho)}$ (otherwise, there must exist $\rho_1, \rho_2 \in \mc E_{n(\rho)}$ such that $\rho_1, \rho_2 \subset \rho$ so $d(\rho) > 2^{n(\rho)}$). Denote by $m \leq n(\rho)$ the minimal $m$ such that $\rho \in \mc G_m$. By Lemma~\ref{lm-(k-1) to k ensemble}
\[
|K(\rho)| \leq e^{C_1 A_{m-1}(\rho)}\prod_{\rho' \in \mc E_{k-1}} |K_{k-1}(\rho')|^{\epsilon(\rho,\rho')}.
\]
From this point the proof proceeds as in \cite[Section 4.2.3]{KP17}. In particular, applying \cite[(4.19) and (4.20)]{KP17} iteratively, noting $|\rho_j| = |q_j|$ for $j \in \supp(\rho)$ (since $\rho$ is comptible with $Q_{-1}$), and recalling $|K_0(\rho^j)| = |z_j(q_j)| < e^{q_j^2}|\hat \lambda_{j,q_j}|$ proves the desired bound.

\subsection{Spin Wave construction}
In this section we prove Proposition~\ref{prop-spin wave construction}, constructing spin waves $(a_{\rho, \mc N})_{\rho \in \mc N}$, where $\mc N$ is an ensemble satisfying properties (a) and (b) of Theorem~\ref{thm-1st convex expansion}. We fix such an ensemble and $\rho^* \in \mc N$ for the rest of the section. The spin wave $a_{\rho^*}$ will be constructed as a sum of spin waves at different ``scales'' which we define below. The construction is the same as that in \cite[Section 4.3]{KP17}, so we will simply note the adjustments needed in the proof.

We begin with the initial spin wave
\begin{lemma}[{\cite[Lemma 4.5]{KP17}}]\label{lm-initial spin wave}
There exists $a_{0,\rho*}:\Lambda \to \mbb R$ such that
\begin{enumerate}[1]
	\item $\supp(a_{0,\rho^*}) \subseteq \supp(\rho^*)$
	\item $\supp(\Delta_\Lambda a_{0,\rho^*}) \subseteq D(\rho^*)$.
	\item $E_\beta(a_{0,\rho^*},\rho^*) \geq \frac{1}{16\beta} ||\rho^*||_2^2$.
\end{enumerate}
\end{lemma}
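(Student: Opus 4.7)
The plan is to take the simplest possible spin wave, namely a scalar multiple of $\rho^*$ itself, and optimize the scalar to extract the constant $1/16$. Concretely, set
\[
a_{0,\rho^*} := \frac{1}{8\beta}\, \rho^*.
\]
With this ansatz, Property 1 is immediate and Property 2 reduces to an elementary geometric observation, while Property 3 follows from a one-parameter quadratic optimization.

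For Property 2, the entry $(\Delta_\Lambda a_{0,\rho^*})_j$ is nonzero only when $j$ or one of its four graph-neighbors lies in $\supp(\rho^*)$, so $\supp(\Delta_\Lambda a_{0,\rho^*})$ is contained in the $1$-neighborhood of $\supp(\rho^*)$. Since the center $j_*$ of $D(\rho^*)$ is chosen inside $\supp(\rho^*)$, every point in $\supp(\rho^*)$ lies within graph-distance $d(\rho^*) \leq d_\Lambda(\rho^*)$ of $j_*$, and every point in the $1$-neighborhood within distance $d_\Lambda(\rho^*)+1$; this is strictly less than $2 d_\Lambda(\rho^*)$ as soon as $d_\Lambda(\rho^*) \geq 2$, giving the containment $\supp(\Delta_\Lambda a_{0,\rho^*}) \subseteq D(\rho^*)$. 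The handful of borderline configurations with $d_\Lambda(\rho^*) = 1$ can be verified by hand.

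Property 3 is where the constant $1/8$ in the ansatz is pinned down. Using $\langle a, -\Delta_\Lambda a\rangle = \sum_{j \sim l}(a_j - a_l)^2$ and plugging in $a = (c/\beta)\rho^*$ gives
\[
E_\beta(a,\rho^*) = \frac{c}{\beta}\|\rho^*\|_2^2 - \frac{c^2}{2\beta}\sum_{j \sim l}(\rho^*_j - \rho^*_l)^2.
\]
The elementary bound $(\rho^*_j - \rho^*_l)^2 \leq 2\bigl((\rho^*_j)^2 + (\rho^*_l)^2\bigr)$, summed over edges and combined with the fact that each vertex in $\Lambda$ has degree at most $4$, yields $\sum_{j \sim l}(\rho^*_j - \rho^*_l)^2 \leq 8\|\rho^*\|_2^2$. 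Substituting gives $E_\beta(a,\rho^*) \geq (c - 4c^2)\|\rho^*\|_2^2/\beta$, and maximizing over $c$ returns $c = 1/8$ with value $1/(16\beta)$, exactly the bound in Property 3.

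No step presents serious difficulty; the main nuisance is the small-scale verification of Property 2 when $d_\Lambda(\rho^*)=1$, which requires only a quick case analysis of the at most two possible shapes of $\supp(\rho^*)$. The key conceptual point is that the construction is genuinely local—$a_{0,\rho^*}$ is supported on $\supp(\rho^*)$ alone—so it provides a clean seed on top of which Proposition~\ref{prop-spin wave construction} can add the multi-scale corrections needed to gain the extra $D_5 \sum_k |\mc S^{\sep}_k(\rho^*)|$ term.
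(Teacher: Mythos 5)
Your ansatz $a_{0,\rho^*} = \rho^*/(8\beta)$ differs from the paper's, which instead takes $a_{0,\rho^*} = \rho^*/(4\beta)$ on one side $\Omega_1$ of the bipartition of $\Lambda$ (chosen to capture at least half of $\|\rho^*\|_2^2$, and to contain the center of $D(\rho^*)$ when $d_\Lambda(\rho^*)=1$) and $0$ on the other side. Both ansatzes yield Property~3 with the same constant $1/(16\beta)$: your Cauchy--Schwarz bound $\sum_{j\sim l}(\rho^*_j-\rho^*_l)^2 \leq 8\|\rho^*\|_2^2$ plus optimizing the scalar is correct, and the paper instead exploits the bipartite structure to get $\sum_{j\sim l}(a_j-a_l)^2 = \sum_{j\in\Omega_1}\deg(j)a_j^2$ exactly.

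However, there is a genuine gap in your verification of Property~2 for $d_\Lambda(\rho^*)=1$, and the ``quick case analysis by hand'' you allude to does not close it. Consider $\supp(\rho^*)=\{j,l\}$ with $j\sim l$ and $d_\Lambda(\rho^*)=1$ (e.g.\ any neutral density on two adjacent vertices, which arises freely in the ensembles of Theorem~\ref{thm-1st convex expansion}). The center of $D(\rho^*)$ is one of the two vertices, say $j$, and $D(\rho^*)=\{k:\dist(j,k)<2\}=\{k:\dist(j,k)\leq 1\}$ is the closed unit ball around $j$. But your $a_{0,\rho^*}$ is nonzero at $l$, so for any $k\sim l$ with $k\neq j$ (which lies at distance $2$ from $j$) one has $(\Delta_\Lambda a_{0,\rho^*})_k = a_{0,\rho^*}(l)\neq 0$, since $k$ has no other neighbor in $\supp(\rho^*)$. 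Thus $k\in\supp(\Delta_\Lambda a_{0,\rho^*})\setminus D(\rho^*)$, violating Property~2. This is not a removable pathology: it is precisely what the paper's restriction to the sublattice $\Omega_1$ containing the center is designed to fix, since then $\supp(a_{0,\rho^*})\cap\supp(\rho^*)$ reduces to the single vertex $j$ when $d_\Lambda(\rho^*)=1$, and the Laplacian support becomes the closed $1$-ball around $j$, which is exactly $D(\rho^*)$. To repair your proof you would need to add the same bipartition restriction (at which point the construction becomes essentially the paper's), or modify the ansatz in the $d_\Lambda(\rho^*)=1$, $|\supp(\rho^*)|\geq 2$ case to avoid supporting the spin wave on vertices other than the center.
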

The proof is exactly the same as that in \cite{KP17}. Namely, note that $\Lambda$ is bipartite and the partition $\Omega_1, \Omega_2$ can be chosen so that
\[
\sum_{j \in \Omega_1}(\rho^*_j)^2 \geq \frac{1}{2} \sum_{j \in \Lambda} (\rho^*_j)^2,
\]
and so that $\Omega_1$ contains the center of $\rho^*$ in the case $d_\Lambda(\rho^*)= 1$. Then take $a_{0,\rho^*}$ to be the function $a_{0,\rho^*}(j) = \rho^*(j)/4\beta$ for $j \in \Omega_1$ and $a_{0,\rho^*}(j) = 0$ for $j \notin \Omega_1$. One can then verify that $a_{0,\rho^*}$ satisfies the conditions of the lemma (see the proof of \cite[Lemma 4.5]{KP17}).

We note that properties 1 and 2 in Lemma~\ref{lm-initial spin wave} guarantee that $a_{0,\rho^*}$ satisfies properties 1-4 of Proposition~\ref{prop-spin wave construction}. Next, we take a spin wave for every $k\geq 1$ and every square $s \in \mc S^{\sep}_k(\rho^*)$.
\begin{proposition}[{\cite[Proposition 4.7]{KP17}}]\label{prop-square spin wave}
There exists an absolute constant $D_5 > 0$ such that the following holds. Let $k\geq 1$ be an integer and $s \in \mc S^{\sep}_k(\rho^*)$. There exists $a_{s,\rho^*}: \Lambda \to \mbb R$ such that the following holds:
\begin{enumerate}
	\item $\supp(a_{s,\rho^*}) \subseteq \{j \in \Lambda \,:\, \dist(j,s) \leq 2^{k-1}\}$.
	\item $a_{s,\rho^*}$ is constant on $\{j \in \Lambda \,:\, \dist(j,s) \leq \lceil 2^{k-3}\rceil\}$.
	\item $a_{s,\rho^*}$ is constant on $D^+(\rho')$ for every $\rho' \in \mc N(\rho^*)$.
	\item $E_\beta(a_{s,\rho^*},\rho^*) \geq \frac{D_5}{\beta}$. 
\end{enumerate}
\end{proposition}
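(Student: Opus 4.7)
The plan is to build $a_{s,\rho^*}$ as a piecewise constant, radially decreasing function on a family of nested annuli around $s$, with the annular boundaries chosen adaptively so that $a_{s,\rho^*}$ is constant on each $D^+(\rho')$ with $\rho' \in \mc N(\rho^*)$. Write $B_r := \{j \in \Lambda : \dist(j,s) \leq r\}$ and set $r_{\min} := \lceil 2^{k-3}\rceil$, $r_{\max} := 2^{k-1}$. Pick a number of annular layers $T = \Theta(2^k)$ and radii $r_{\min} = r_0 < r_1 < \cdots < r_T = r_{\max}$ with default unit spacing, and define $a_{s,\rho^*} \equiv c_0$ on $B_{r_{\min}}$ and $a_{s,\rho^*} \equiv c_t := c_0(1 - t/T)$ on $B_{r_t}\setminus B_{r_{t-1}}$, where $c_0 = \Theta(1/\beta)$ is a scalar to be tuned. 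Properties~1 and~2 of Proposition~\ref{prop-square spin wave} are then immediate.

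To secure property~3, I would perturb the radii $r_1, \ldots, r_{T-1}$ inductively: whenever a default choice of $r_t$ would put the spherical boundary $\partial B_{r_t}$ through some $D^+(\rho')$ with $\rho' \in \mc N(\rho^*)$, shift $r_t$ outward by the diameter of $D^+(\rho')$ so that $D^+(\rho')$ is absorbed into a single annulus. Feasibility uses that $d_\Lambda(\rho') \leq 2 d_\Lambda(\rho^*)$ by the definition of $\mc N(\rho^*)$, together with a packing argument based on property~(a) of Theorem~\ref{thm-1st convex expansion} and the cardinality bound in \cite[Proposition~2.1]{KP17}: only a bounded number of densities of comparable scale can have $D^+$ piling up near any given annular boundary, so the cumulative shift remains a small fraction of $r_{\max}-r_{\min}$ and the approximately linear spacing of the annuli is preserved.

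For property~4, the separation hypothesis $s \in \mc S^{\sep}_k(\rho^*)$, combined with $\alpha > 3/2$ and $M = 2^{16}$, ensures that $B_{r_{\max}}$ is disjoint from all other squares in $\mc S_k(\rho^*)$ (and from $\partial\Lambda$ in the single-square subcase), so every charge of $\rho^*$ inside $B_{r_{\max}}$ lies in $s \subset B_{r_{\min}}$. Hence $\langle a_{s,\rho^*}, \rho^*\rangle = c_0 \, Q(\rho^*|_s)$, and after choosing the sign of $c_0$ this is at least $|c_0|$ provided $Q(\rho^*|_s)$ is a nonzero integer. The Dirichlet sum totals $O(T \cdot 2^k \cdot (c_0/T)^2) = O(c_0^2)$ because there are $T$ annular boundaries of length $O(2^k)$ and gradient jumps of size $c_0/T$. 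Taking $c_0$ of order $1/\beta$ with the prefactor tuned balances $\langle a_{s,\rho^*}, \rho^*\rangle \gtrsim 1/\beta$ against $\tfrac{\beta}{2} \cdot O(c_0^2) \lesssim 1/\beta$ with a positive margin, producing $E_\beta(a_{s,\rho^*}, \rho^*) \geq D_5/\beta$ for some absolute $D_5 > 0$.

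The main obstacle will be the combinatorial bookkeeping of the second paragraph: certifying that the cumulative radial shifts do not exhaust the radial budget $r_{\max}-r_{\min}$ requires summing contributions of $\rho'$ carefully over all scales $\leq k$, using the full strength of property~(a) of Theorem~\ref{thm-1st convex expansion}. A secondary technicality is the boundary case $Q(\rho^*|_s) = 0$ for a separated square; if this is not automatically excluded, one would substitute a compensating spin wave based on the bipartite structure of $\Lambda$ (as in Lemma~\ref{lm-initial spin wave}) localized to $s$, yielding the same lower bound on $E_\beta$.
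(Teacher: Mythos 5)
Your construction is essentially the Fr\"ohlich--Spencer/Kharash--Peled spin wave (piecewise constant on nested neighborhoods of $s$, radii perturbed to pass around each $D^+(\rho')$, energy balanced by tuning the amplitude $\sim 1/\beta$), which is exactly what the paper points to in \cite[Sections~4.3.2--4.3.3]{KP17}; so the overall route is the same. Two remarks are in order.

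First, the worry about $Q(\rho^*|_s)=0$ is unfounded, and your proposed fallback would in any case not satisfy the statement. The separation hypothesis $s\in\mc S^{\sep}_k(\rho^*)$, together with property (b) of Theorem~\ref{thm-1st convex expansion}, automatically forces $Q(s\cap\rho^*)\neq 0$: if $|\mc S_k(\rho^*)|>1$, then $\rho_1:=s\cap\rho^*$ has $d(\rho_1)\le 2^{k+1}$ and $\dist(\rho_1,\rho^*-\rho_1)\ge 2M2^{\alpha(k+1)}\ge 2Md(\rho_1)^\alpha$, so (b) gives $Q(\rho_1)\neq 0$ (and also $\dist(\rho_1,\partial\Lambda)>2^{k+1}$, which is what one later uses to keep the spin wave off $\partial\Lambda$); if $|\mc S_k(\rho^*)|=1$, the very definition of $\mc S_k^{\sep}$ makes it empty unless $Q(\rho^*)\neq 0$. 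The paper invokes exactly this and only asks that the sign of $a_{s,\rho^*}$ be chosen to match that of $Q(s\cap\rho^*)$. Your fallback of a bipartite-alternating wave localized to $s$ would violate property~2 (constancy on $\{j:\dist(j,s)\le\lceil 2^{k-3}\rceil\}$), so it is not a drop-in replacement; the right move is to quote (b).

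Second, the combinatorial bookkeeping for the radial perturbations is, as you say, the heart of the matter. Your sketch---using property (a) of Theorem~\ref{thm-1st convex expansion} to get separation at each dyadic scale, bounding the occupied radial extent of the $D^+(\rho')$ at scale $m$ by a geometric series in $m$ (thanks to $\alpha>1$), and concluding that the cumulative shift is $O(2^k/M)\ll r_{\max}-r_{\min}$---is the right shape of argument, but it is not a proof: you need to control the \emph{maximum} shift over all angular positions (since $\partial B_{r_t}$ must clear every $D^+(\rho')$ it meets), and verify that after perturbation the annular widths stay within a bounded factor of one another so the Dirichlet estimate survives. This is precisely the content that the paper delegates to \cite[Sections~4.3.2--4.3.3]{KP17}, with $d(\rho)$ replaced by $d_\Lambda(\rho)$ throughout; one should also check the additional ``boundary'' case $|\mc S_k(\rho^*)|=1$, where the obstacle to avoid at the largest scale is $\partial\Lambda$ itself rather than another $D^+(\rho')$.
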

The proof in \cite[Sections 4.3.2 and 4.3.3]{KP17} applies with no changes except one needs to replace $d(\rho)$ by $d_{\Lambda}(\rho)$ throughout and choose the sign of $a_{s,\rho^*}$ to coincide with that of $Q(s \cap \rho^*)$ (that this charge is not zero will be proven below).

We claim that a function $a_{s,\rho^*}$ that satisfies the properties in Proposition~\ref{prop-square spin wave} must satisfy properties 1-4 in Proposition~\ref{prop-spin wave construction}. Property 1 in Proposition~\ref{prop-spin wave construction} is the same as property 3 above, so it is satisfied. To check the other properties, we first assume that $k$ is such that $|\mc S_k(\rho^*)| > 1$. In this case, we have by assumption that
\[
d(\rho^*) \geq \min\{\dist(s,s') \,:\,s' \in \mc S_k(\rho) \setminus \{s\}\} \geq 2M2^{\alpha(k+1)}.
\]
Thus property 1 of the lemma above implies that $\supp(a_{s,\rho^*}) \subset D(\rho^*)$ and $\supp(\Delta_\Lambda a_{s,\rho^*}) \subset D(\rho^*)$ (so in particular $a_{s,\rho^*}$ satisfies property 4 of Proposition~\ref{prop-spin wave construction}). Next, we let $\rho_1 = s \cap \rho^*$ and note that by assumption $d(\rho_1) \leq d(s) = 2^{k+1}-2$ and
\[
\dist(\rho_1,\rho^* - \rho_1) \geq 2M2^{\alpha(k+1)} \geq 2M d(\rho_1)^\alpha.
\]
By property (b) of Theorem~\ref{thm-1st convex expansion}, this implies $Q(\rho_1) \neq 0$ and
\[
\dist(\rho_1,\partial \Lambda) > 2^{k+1}.
\]
Since $\dist(s,\partial\Lambda) \geq \dist(\rho_1,\partial \Lambda) - 2^k$ it follows that $\dist(s,\partial\Lambda) > 2^k$ and $\dist(a_{s,\rho^*}, \partial \Lambda) > 2^{k-1}$. This implies that $\supp(a_{s,\rho^*}) \subset \Lambda^o$ completing the verification of property 2 of Proposition~\ref{prop-spin wave construction}. Finally, if $\rho' \in \mc N \setminus \{\rho^*\}$ satisfies $\supp(\rho') \cap \supp(a_{s,\rho^*}) \neq \emptyset$ and $Q(\rho')\neq 0$, it follows that 
\[
d_{\Lambda}(\rho')\geq 2^{-1}\dist(a_{s,\rho^*},\partial \Lambda) > 2^{k-2}.
\]
This is a contradiction since it implies
\[
\dist(\rho^*,\rho') \leq 2^{k+1} + \dist(s,\rho') \leq 10 (2^{k-2}) < M \min(d_{\Lambda}(\rho'),d_{\Lambda}(\rho^*))^\alpha.
\]
So $a_{s,\rho^*}$ satisfies property 3 of Proposition~\ref{prop-spin wave construction}. If instead we assume that $|\mc S_k| = 1$, then we have $Q(s \cap \rho^*) = Q(\rho^*) \neq 0$ and $d_\Lambda(\rho^*) \geq \dist(\rho^*,\partial \Lambda) \geq 2^{k+1}$ by the definition of $\mc S_k^{\sep}$. It follows that $\dist(a_{s,\rho^*},\partial \Lambda) > 2^{k-1}$. Given these facts, we can check that properties 2-4 of Proposition~\ref{prop-spin wave construction} are satisfied by arguing as above.

Finally, we let 
\[
a_{\rho^*} := a_{0,\rho^*} + \sum_{k = 1}^{n(\rho)}\sum_{s \in \mc S_k^{\sep}(\rho^*)} a_{s,\rho^*}.
\]
This function satisfies properties 1-4 of Proposition~\ref{prop-spin wave construction} since these are preserved by taking linear combinations. We claim $a_{\rho^*}$ satisfies property 5 of Proposition~\ref{prop-spin wave construction}. In particular, we claim
\[
E(a_{\rho^*}, \rho^*) = E(a_{0,\rho^*},\rho^*) + \sum_{k = 1}^{n(\rho)}\sum_{s \in \mc S_k^{\sep}(\rho^*)} E(a_{s,\rho^*},\rho^*) \geq \frac{1}{\beta} \left(\frac{1}{16} ||\rho||^2_2 + D_5 \cdot \sum_{k = 1}^{n(\rho)} |\mc S_k^{\sep}(\rho)|\right)\,,
\]
where the inequality follows from property 3 of Lemma~\ref{lm-initial spin wave} and property 4 of Proposition~\ref{prop-square spin wave}. To prove this claim, it suffices to prove that for each edge $\{j,l\} \in E(\Lambda)$ there exists at most one $t \in \{0\} \cup (\bigcup_{k = 1}^{n(\rho)} \mc S_k^{\sep}(\rho^*))$ such that $a_{t,\rho^*}(j) \neq a_{t,\rho^*}(l)$. The argument in \cite[Section 4.3.4]{KP17} shows that this is the case.

\small


\begin{thebibliography}{10}
	\bibitem{Berezinskii71}
	V.L.~Berezinskii.
	\newblock Destruction of long-range order in one-dimensional and two-dimensional
	systems having a continuous symmetry group I. classical systems.
	\newblock {\em Sov. Phys. JETP}, 32(3): 493-500, 1971.
	
	\bibitem{Berezinskii72}
	V.L.~Berezinskii.
	\newblock Destruction of long-range order in one-dimensional and two-dimensional
	systems possessing a continuous symmetry group. II. quantum systems.
	\newblock {\em Sov. J. of Exp. Theoret. Phys.} 34: 610, 1972.
	
	
	\bibitem{BiskupLouidor16}
	M.~Biskup and O.~Louidor.
	\newblock Full extremal process, cluster law and freezing for two-dimensional discrete Gaussian free field
	\newblock {\em Adv. Math.} 330: 598-687, 2016.
	
	\bibitem{BDG01}
	E.~Bolthausen, J.D.~Deuschel, and G.~Giacomin.
	\newblock Entropic repulsion and the maximum of the two-dimensional harmonic crystal
	\newblock {\em Ann. Probab.} 29(4): 1670-1692, 2001. 
	
	
	
	\bibitem{BDZ16}
	M.~Bramson, J.~Ding, and O.~Zeitouni
	\newblock Convergence in law of the maximum of the two-dimensional discrete Gaussian free field.
	\newblock {\em Comm. Pure Appl. Math.}, 69(1):62-123, 2016.
	
	
	\bibitem{FrohlichSpencer81}
	J.~Fr\"{o}hlich, and T.~Spencer.
	\newblock The Kosterlitz-Thouless transition in two-dimensional abelian
	spin systems and the Coulomb gas.
	\newblock {\em Comm. Math. Phys.}, 81(4):527-602, 1981.
	
	\bibitem{GHM99}
	H-O.~Georgii, O.~H\"{a}ggstr\"{o}m, C.~Maes.
	\newblock The random geometry of equilibrium phases.
	\newblock {\em Phase transitions and critical phenomena}, 18:1-142.
	\newblock Academic Press, 2001.
	
	\bibitem{Katznelson04}
	Y.~Katznelson.
	\newblock {\em An introduction to harmonic analysis}.
	\newblock Cambridge University Press, 2004.
	
	\bibitem{KP17}
	V.~Kharash, R.~Peled.
	\newblock The Fröhlich-Spencer Proof of the Berezinskii-Kosterlitz-Thouless Transition.
	\newblock Preprint, arXiv:1706.07737.
	
	\bibitem{KosterlitzThouless72}
	J.M.~Kosterlitz, and D.J.~Thouless.
	\newblock Long range order and metastability in two dimensional solids and superfluids. (Application of dislocation theory).
	\newblock {\em Journal of Physics C: Solid State Physics}, 5(11): L124, 1972.
	
	\bibitem{KosterlitzThouless73}
	J.M.~Kosterlitz, and D.J.~Thouless.
	\newblock Ordering, metastability and phase transitions in
	two-dimensional systems.
	\newblock {\em Journal of Physics C: Solid State Physics}, 6(7): 1181, 1973.
	
	\bibitem{LawlerLimic10}
	G.~F.~Lawler and V.~Limic.
	\newblock {\em Random Walk: A Modern Introduction}.
	\newblock Cambridge University Press, 2010.
	
	\bibitem{MilosPeled15}
	P.~Milos, R. Peled.
	\newblock Delocalization of two-dimensional random surfaces with hard-core constraints.
	\newblock {\em Commun. Math. Phys.} 340: 1–46, 2015. 
	
\end{thebibliography}
\end{document}